\def\3bar{{|\hspace{-.01in}\|}}
\journal{B}	
\numberwithin{equation}{section}
\newtheorem{theorem}{Theorem}[section]
\newtheorem{lemma}[theorem]{Lemma}
\newtheorem{remark}{Remark}[section]
\newtheorem{assumption}{Assumption}[section]
\newcommand{\Rmnum}[1]{\expandafter\@slowromancap\romannumeral #1@}
\begin{document}
	\sloppy
	\begin{frontmatter}
		
		
		
		\title{Mass conservation, positivity and  energy identical-relation preserving scheme for the Navier-Stokes equations with variable density\tnoteref{t1}}
		
		\tnotetext[t1]{This work is partially supported by the Natural Science Foundation of Chongqing
			(No. CSTB2024NSCQ-MSX0221) and  the Natural Science Foundation of China (Nos. 12271082, 62231016).}
		
		\author[label1]{Fan Yang}
		\author[label1]{Haiyun Dong}
		\address[label1]{College of Mathematics and Statistics, Chongqing University, Chongqing 401331, P.R. China}
		\author[label2]{Maojun Li}
		\address[label2]{School of Mathematical Sciences, University of Electronic Science and Technology of China, Sichuan, 611731, PR China}
		\author[label1]{Kun Wang\corref{cor1}}
	\cortext[cor1]{Corresponding author.}
	\begin{abstract}
		In this paper, we consider a mass conservation, positivity and  energy identical-relation preserving scheme for the Navier-Stokes equations with variable density. Utilizing  the square transformation, we first ensure the positivity of the numerical fluid density, which is form-invariant and regardless of the discrete scheme. Then, by proposing a new recovery technique to eliminate the numerical dissipation of the energy and to balance the loss of the mass when approximating the reformation form, we preserve the original energy identical-relation and mass conservation of the proposed scheme. To the best of our knowledge, this is the first work that can preserve the   original energy identical-relation for the Navier-Stokes equations with variable density. Moreover, the error estimates of the considered scheme are derived. Finally, we show some numerical examples to verify the correctness and efficiency.
	\end{abstract}
	
	\begin{keyword}
		Navier-Stokes equations with variable denstity \sep positivity preserving \sep mass conservation \sep energy identical-relation preserving \sep  error estimate
		
		\MSC[2020] 65N30\sep 76D05\sep 76M05   
		
	\end{keyword}
	\journal{XXX}
\end{frontmatter}

	
	
	\section{Introduction}
	\label{sec1}
In this paper, we   focus on the  incompressible Navier-Stokes equations with variable density
\begin{align}
	\rho_t+\nabla\cdot(\rho u)&=0, &\mathrm{in}~\Omega\times(0,T],\label{1.1}\\
	\rho u_t-\mu\Delta  u+\rho(u\cdot\nabla)u+\nabla p&=f, &\mathrm{in}~ \Omega\times(0,T],\label{1.2}\\
	\nabla\cdot u&=0, &\mathrm{in}~\Omega\times(0,T],\label{1.3}
\end{align}
where $\Omega\subset\mathcal{R}^2$ is a convex polygonal domain with a sufficiently smooth boundary $\partial\Omega$, $\rho=\rho(\mathbf{x},t)=\rho(x,y,t)$ represents the density of the fluid, $u=u(\mathbf{x},t)=(u_1(\mathbf{x},t),u_2(\mathbf{x},t))^\top$ represents the velocity of the fluid, $\mu$ denotes the viscosity coefficient,  $f = (f_1(\mathbf{x}, t),  f_2(\mathbf{x},  t))^\top$ is a given body force. Moreover, we give the following initial conditions and boundary conditions:
\begin{flalign}
	\left\{\begin{array}{l}
		\rho(\mathbf{x}, 0)=\rho_0(\mathbf{x}) ,  \\
		u(\mathbf{x}, 0)=u_0(\mathbf{x}), \\
	\end{array}\right.
	\left\{\begin{array}{l}
		\rho(\mathbf{x}, t)|_{\Gamma_{in}}=a(\mathbf{x}, t),   \\
		u(\mathbf{x}, t)|_{\partial\Omega}=g(\mathbf{x}, t), \\
	\end{array}\right.\nonumber
\end{flalign}
$\rho_0(\mathbf{x})$, $a(\mathbf{x}, t)$, $u_0(\mathbf{x})=(u_{10}(\mathbf{x}),u_{20}(\mathbf{x}))^\top$ and $g(\mathbf{x}, t)=(g_1(\mathbf{x}, t),g_1(\mathbf{x}, t)^\top$ are given functions, $\Gamma_{in}=\{\mathbf{x}\in\partial \Omega:g\cdot\vec{\nu}<0\}$ is the inflow boundary with $\vec{\nu}$ being the outward normal vector, and the initial density $\rho_0(\mathbf{x})$ satisfy the following conditions \cite{bib17}
\begin{equation}
	0 <\rho_0^{min}\leq \rho(t, \mathbf{x}) \leq \rho_0^{max}  ~~~\mathrm{in}~ \Omega.\label{1.4}
\end{equation}
For simplicity, we consider that $g(\mathbf{x}, t)=0$ and assume that the boundary $\partial\Omega$ is impervious, which means $g\cdot\vec{\nu}=0$ on $\partial\Omega$ and $\Gamma_{in}=\emptyset$ in this paper. Navier-Stokes equations with variable density \eqref{1.1}-\eqref{1.3} are a hyperbolic-parabolic coupled nonlinear system, which  plays an important role in  fluid mechanics.

For the existence and uniqueness of the solutions of  Navier-Stokes equations with variable density \eqref{1.1}-\eqref{1.3}, the reader is referred to, e.g., \cite{bib01,plusbib37,bib02,bib03}. On the other hand, there have been lots of attentions in developing efficient numerical methods for \eqref{1.1}-\eqref{1.3}, especially in the schemes preserving physical properties.
In 1992, Bell et al. \cite{bib05} first introduced the projection method for variable density issues, they employed the Crank-Nicolson method for temporal discretization, and utilized a standard difference method for spatial discretization. Subsequently, Almgren et al. \cite{bib04} and Puckett et al. \cite{bib06} investigated the conservative adaptive projection method and the higher-order projection method for tracking fluid interfaces, respectively. Unlike other traditional algorithms, this method reduces computational costs by solving the discrete pressure variable through the incorporation of a Poisson equation. In \cite{bib10}, a novel time-stepping method was introduced which had been verified by some numerical examples. Additionally, Li et al. in \cite{bib11} proposed a second-order mixed stabilized finite element   method for solving Navier-Stokes equations with variable density. Furthermore, Liu and Walkington \cite{bib12} conducted an investigation into the discontinuous Galerkin (DG) method for solving Navier-Stokes equations with variable density. They proved the convergence of the scheme but did not provide any convergence rates. In contrast, Pyo and Shen \cite{bib15} studied two Gauge-Uzawa schemes and demonstrated that the first-order temporally discretized Gauge-Uzawa schemes possess unconditional stability. Moreover, Li et al. \cite{plusbib38} presented a filtered time-stepping technique \cite{plusbib39}, which could improve the time accuracy to second-order. Afterwards, Reuter et al. \cite{plusbib41} introduced a novel algorithm of explicit temporal discretization for low-Mach Navier-Stokes equations with variable density, which achieved second-order accuracy in time. By constructing an implicit temporal scheme with the Taylor series and using a finite element with  standard high-order Lagrange basis functions, Lundgren et al. \cite{plusbib42} considered a fourth-order method for \eqref{1.1}-\eqref{1.3}.

When designing numerical schemes, one of interesting and challenging topics is to preserve the physical properties of the continuous model in the discrete scheme, which has attracted lots of attentions in the past decade. For the Navier-Stokes equations with constant density, by transforming into an equivalent form known as the energy, momentum and angular momentum conserving (EMAC) formulation in \cite{bib18}, a mixed finite element method are proposed, which imposed the incompressible condition weakly and preserved physical properties such as momentum, energy, and enstrophy. This research was further extended to address long-term approximations in \cite{bib19} and three-dimensional problems in \cite{bib20}. Concurrently, a mimetic spectral element method was introduced in \cite{bib21}, that is capable of preserving mass, energy, enstrophy, and vorticity. Additionally, this concept was adapted to problems involving moving domains in \cite{bib22}.  Lately, by deriving the viscosity coefficients through a residual-based shock-capturing approach, Lundgren et al. \cite{plusbib40} presented a novel symmetric and  tensor-based viscosity method, which can ensure the  conservation of angular momentum and the dissipation of kinetic energy. For the variable density incompressible flows, an entropy-stable scheme was explored in \cite{bib23} by combining the discontinuous Galerkin method with an artificial compressible approximation. Recognizing the significance of density bounds in numerical simulations, a bound-preserving discontinuous Galerkin method was introduced in \cite{bib24}. Furthermore, Desmons et al. \cite{bib25} introduced a generalized high-order momentum preserving scheme, which was claimed to be easy for implementation with the finite volume method. To ensure the positivity preserving of the density, a square transformation $\rho=\sigma^2$ was introduced in \cite{bib26,bib15,WLZ2024}. By introducing power-type and exponential-type scalar auxiliary variables to define the system's energy and to balance the incompressible condition's influence respectively, Zhang et al. \cite{bib28} reformulated the Navier-Stokes equations with variable density into an equivalent form and subsequently developed a linear, decoupled, and fully discrete finite element scheme. This scheme preserves the mass, momentum, and modified energy conservation relations. Recently, by introducing a formulation with consistent nonlinear terms, the schemes with the numerical density invariant to global shifts was studied in \cite{LN2024}. And the authors in \cite{LSTZ2024} investigate  schemes which could preserve the lower bound of the numerical density  and energy inequality under the gravitational force.

But, due to the complex nonlinearities and coupling terms, it is challenging to derive error analysis for numerical methods solving the Navier-Stokes equations with variable density. Under the assumptions that the numerical density is bound and can achieves first order convergence, the author  in \cite{GS2011} presented a first-order splitting scheme and deduced its error estimates. Recently, giving up the assumption on the numerical density, Cai et al. \cite{bib16} derived the error estimate of the backward Euler method applied to the 2D Navier-Stokes equations with variable density, leveraging an error splitting technique and discrete maximal $L^p$-regularity. Drawing upon this research, Li and An in \cite{bib17} presented a novel BDF2 finite element scheme, by utilizing the Mini element space to approximate both the velocity and the pressure, and employing the quadratic conforming finite element space to approximate the density. Leveraging a post-processed technique, the authors in \cite{bib13}  demonstrated the  convergence order of $O(\tau^2 + h^2)$ in $L^2$-norm for the numerical density $\rho_h^n$ and numerical velocity $u_h^n$. Lately, by rewriting  the original system, Pan and Cai in \cite{PC2024} proposed a general BDF2 finite element method preserving the energy inequality and deduced its error analysis.  But, there is no literature on error estimates for the fully discrete first-order scheme for solving Navier-Stokes equations with variable density, which can preserve the mass conservation, the positivity of the numerical density and the original energy identical-relation of the system.

In this paper, we will consider a  mass conservation, positivity and  energy identical-relation preserving scheme for the Navier-Stokes equations with variable density \eqref{1.1}-\eqref{1.3}.  To ensure the positivity of the numerical density, we utilize the square transformation considered in \cite{bib26,WLZ2024} to transform the density sub-equation. Compared to other positivity preserving methods, the method considered here has two mainly advantages: form-invariant and irrelevance of the discrete scheme. Therefore, it is possible to directly adopt other schemes in the references for solving the density sub-equation. But, the mass conservation is lost when approximating this reformation form. To overcome this problem, then we use the recovery technique in \cite{HS2021,bib27} to preserve the discrete system's mass. In addition, through constructing a new recovery method, we eliminate successfully the numerical energy dissipation usually existent in the numerical scheme. Moreover, we prove that the scheme considered in this paper not only can inherit the mass conservation, positivity, original energy identical-relation from the continuous equations, but also achieve the following convergence order in the $L^2$-norm
\begin{align*}
	\|\rho(\mathbf{x}, t_n)-\rho_h^n\|_{L^2}^2+\|u(\mathbf{x}, t_n)-u_h^n\|_{L^2}^2\leq C(\tau^2+h^4),
\end{align*}
where $C$ is a general positive constant, $h$ and $\tau$ are the spatial mesh size and the temporal step, respectively.

The rest of this paper is organized as follows. In Section \ref{sec.2}, we introduce some preliminaries, such as functional spaces, some inequalities commonly used, and an equivalent model with some essential properties. Then, based on this equivalent form, we propose a fully discrete first order recovery finite element scheme in Section \ref{sec.3}, that keeps density positivity, mass conservation, and energy identical-relation preserving. Subsequently, in Section \ref{sec.4}, we derive the error estimates of the proposed scheme. Furthermore, in Section \ref{sec.5}, we present some examples to confirm the convergence orders and efficiency of the recovery finite element scheme. Finally, a conclusion remark is made in  Section \ref{sec.6}.

\section{Preliminaries}\label{sec.2}
In this section, after introducing some functional spaces in the first subsection, we will recall some frequently used inequalities and present some essential properties for the Navier-Stokes equations with variable density in Subsections 2.2 and 2.3, respectively.

\subsection{Functional spaces}
For $k\in N^+$ and $1\leq p\leq +\infty$, we denote $L^p(\Omega)$ and $W^{k, p}(\Omega)$ as the classical Lebesgue space and Sobolev space, respectively. The norms of these spaces are denoted by
\begin{align*}
	||u||_{L^p(\Omega)}&=\left(\int_{\Omega}|u(\mathbf{x})|^p\mathrm{d} \mathbf{x}\right)^\frac{1}{p},\\
	||u||_{W^{k,p}(\Omega)}&=\left(\sum\limits_{|j|\leq k}||D^ju||_{L^p(\Omega)}^p\right)^\frac{1}{p}.
\end{align*}
Within this context, $W^{k, 2}(\Omega)$ is also known as the Hilbert space and can be expressed as $H^k(\Omega)$.  $||\cdot||_{L^\infty}$ represents the norm of the space  $L^\infty(\Omega)$ which is defined as
\begin{equation*}
	||u||_{L^\infty(\Omega)}=ess\sup\limits_{\mathbf{x}\in \Omega}|u(\mathbf{x})|,
\end{equation*}
and $(\cdot, \cdot)$ denotes the inner product in $L^2(\Omega)$. Furthermore, we define the following frequently utilized mathematical frameworks:
$$W=H^1(\Omega), \quad V=(H_0^1(\Omega))^2,    \quad V_0=\{v\in V,  \nabla\cdot v=0\},$$
$$M=L_0^2(\Omega)=\{q\in L^2(\Omega), \int_{\Omega}q d\mathbf{x}=0\}.$$

On the other hand, let $\mathcal{T}_h = \{K\}$ be a uniformly regular triangulation partition of $\Omega$ with a mesh size $h (0 <h< 1)$. We also define  the finite element spaces
$$V_h=\{u_h\in C(\bar{\Omega})^2\cap V, ~v_h|_K\in P_2(K)^2, ~\forall K\in\mathcal{T}_h\}\subset V,$$
$$M_h=\{p_h\in C(\bar{\Omega})\cap H^1(\Omega), ~q_h|_K\in P_1(K), ~\forall K\in \mathcal{T}_h, ~\int_{\Omega}q_hd\mathbf{x}=0\}\subset M,$$
$$W_h=\{\rho_h\in  C(\bar{\Omega})\cap W, ~r_h|_K\in P_2(K), ~\forall K\in \mathcal{T}_h\}\subset W,$$
where $P_m(K)$ denotes the polynomial space with degree up to $m$ on every triangle $K\in \mathcal{T}_h$. {Obviously, There exists a positive constant $\beta_h> 0$ such that the so-called inf-sup inequality holds (see, e.g, \cite{bib30}): for each $q_h\in M_h$, there exists $v_h\in V_h, v_h\neq0$, such that
	\begin{align}\label{insu}
		\beta_h||q_h||_{L^2}\leq\sup\limits_{v_h\in V_hv_h\neq0}\frac{(\nabla\cdot v_h,q_h)}{||\nabla v_h||_{L^2}}.
\end{align}}
\subsection{Some inequalities}
We recall some useful inequalities in two dimension in this subsection. For any $v_h$ belongs to the finite element spaces defined above, there hold

1. Inverse inequality \cite{bib31}:
\begin{align}
	||v_h||_{L^p}&\leq Ch^{\frac{2}{p}-\frac{2}{q}}||v_h||_{L^q}, \label{2.1}\\
	||v_h||_{L^\infty}&\leq Ch^{-1}||v_h||_{L^2},\qquad ||\nabla v_h||_{L^\infty}&\leq Ch^{-1}||\nabla v_h||_{L^2} \label{2.2}\\
	||v_h||_{H^1}&\leq Ch^{-1}||v_h||_{L^2};\label{2.3}
\end{align}

2. Agmon's inequality \cite{bib32}:
\begin{equation}
	||v_h||_{L^\infty}\leq C||v_h||_{L^2}^{\frac{1}{2}}||\Delta v_h||_{L^2}^{\frac{1}{2}}.\label{2.4}
\end{equation}

The famous Gronwall lemma which is frequently used for the time dependent problem is as follows:

\begin{lemma}\label{lem.7}(Gronwall inequality \cite{bib17})
	Let  $B>0$ and $a_k, b_k, c_k$ be non-negative numbers such that
	\begin{equation}
		a_n+\tau\sum\limits_{k=0}^{n}b_k\leq\tau\sum\limits_{k=0}^{n}c_k a_k+B,   ~n\geq 0.\label{2.5}
	\end{equation}
	If $\tau c_k< 1$ and  $d_k=(1-\tau c_k)^{-1}$, then there holds
	\begin{equation}
		a_n+\tau\sum\limits_{k=0}^{n}b_k\leq \exp\left(\tau\sum\limits_{k=0}^{n}c_kd_k\right)B,  ~n\geq 0.\label{2.6}
	\end{equation}
\end{lemma}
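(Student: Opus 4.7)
The plan is to reduce the lemma to a clean one-step recursion by absorbing the $b_k$ contributions into a single quantity and then iterating. First I would set $\phi_k := a_k + \tau \sum_{j=0}^{k} b_j$, which is nondecreasing in $k$ and satisfies $a_k \leq \phi_k$. Rewriting the hypothesis \eqref{2.5} with this substitution (using $a_k \leq \phi_k$ only on the right-hand side, where it gives a valid upper bound since $c_k \geq 0$) gives
\begin{equation*}
\phi_n \leq \tau \sum_{k=0}^{n} c_k \phi_k + B.
\end{equation*}
Splitting off the $k=n$ term, $\tau c_n \phi_n$, and moving it to the left produces $(1-\tau c_n)\phi_n \leq \tau\sum_{k=0}^{n-1} c_k \phi_k + B$. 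Since $\tau c_n < 1$, I can multiply through by $d_n = (1-\tau c_n)^{-1} > 0$.

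Next I would introduce $r_n := \tau \sum_{k=0}^{n} c_k \phi_k$ (with the convention $r_{-1} = 0$) to turn the inequality into a first-order recurrence. From $\phi_n \leq r_n + B$ and $r_n - r_{n-1} = \tau c_n \phi_n$ I get $r_n - r_{n-1} \leq \tau c_n (r_n + B)$, i.e.\ $(1-\tau c_n) r_n \leq r_{n-1} + \tau c_n B$. Adding $B$ to both sides and using the algebraic identity $\tau c_n d_n + 1 = d_n$ (which is the key step where the choice $d_n = (1-\tau c_n)^{-1}$ pays off) yields the clean recursion
\begin{equation*}
r_n + B \leq d_n (r_{n-1} + B).
\end{equation*}

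Iterating this recurrence down to $r_{-1} + B = B$ gives $r_n + B \leq B \prod_{k=0}^{n} d_k$, and since $\phi_n \leq r_n + B$, we obtain $\phi_n \leq B \prod_{k=0}^{n} d_k$. Finally I would convert the product to an exponential: from $d_k = 1 + \tau c_k d_k$ and the elementary inequality $1 + x \leq e^{x}$ (with $x = \tau c_k d_k \geq 0$), it follows that $d_k \leq \exp(\tau c_k d_k)$, so
\begin{equation*}
\prod_{k=0}^{n} d_k \leq \exp\!\Bigl(\tau \sum_{k=0}^{n} c_k d_k\Bigr),
\end{equation*}
which yields \eqref{2.6} upon recalling the definition of $\phi_n$.

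The only subtle point — really the single trick that makes the whole argument short — is the identity $\tau c_n d_n + 1 = d_n$, which is what allows the recurrence for $r_n + B$ to factor as a pure product. Without it, one is left iterating an affine recurrence $r_n \leq d_n r_{n-1} + \tau c_n d_n B$ whose closed form involves a sum of products and requires a separate telescoping argument to reach the exponential bound. Everything else is bookkeeping: nonnegativity of $b_k$ to drop the $\tau\sum b_k$ terms on the right at the very start, and monotonicity of $\exp$ to finish.
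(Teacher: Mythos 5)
Your proof is correct and complete: the reduction to $\phi_n\leq\tau\sum_{k=0}^n c_k\phi_k+B$, the passage to the partial sums $r_n$, the identity $1+\tau c_n d_n=d_n$ giving the multiplicative recursion $r_n+B\leq d_n(r_{n-1}+B)$, and the final bound $\prod_k d_k\leq\exp(\tau\sum_k c_k d_k)$ via $1+x\leq e^x$ all check out. There is nothing in the paper to compare against, however: the lemma is quoted from \cite{bib17} without proof, so your argument — which is the standard derivation of this discrete Gronwall inequality — simply supplies what the paper omits.
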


Moreover, recalling the $L^2$ projection operator $\Pi_h$ \cite{bib17}: $W\rightarrow W_h$
\begin{equation}
	(\Pi_h\sigma-\sigma, r_h)=0, ~\forall \sigma\in W, ~r_h\in W_h, \label{3.13}
\end{equation}
and the Stokes projection $(R_h,Q_h): V\times M\rightarrow V_h\times M_h$
\begin{align}
	(\nabla(R_h u-u), \nabla v_h)-(\nabla\cdot v_h, Q_hp-p)&=0,  \label{3.14}\\
	(\nabla\cdot(R_hu-u), q_h)&=0, \label{3.15}
\end{align}
for $\forall u\in V, p\in M$ and $\forall v_h\in V_h, q_h\in M_h$, we have \cite{bib17,bib30}
\begin{align}
	||u-R_hu||_{L^2}&+h||\nabla(u-R_hu)||_{L^2}+h||p^n-Q_hp||_{L^2}\nonumber\\
	&\leq Ch^3(||u||_{H^3}+||p||_{H^2}), \label{3.16}\\
	||\sigma\!-\!\Pi_h\sigma||\!_{L^2}\!&+\!||\rho\!-\!\Pi_h\rho||\!_{L^2}\!+\!h(||\sigma\!-\!\Pi_h\sigma||\!_{H^1}\!+\!||\rho\!-\!\Pi_h\rho||\!_{H^1})\!\nonumber\\
	&\leq\! Ch^3(||\sigma||_{H^3}+||\rho||_{H^2}).\label{3.17}
\end{align}

\subsection{Some essential properties}
For the Navier-Stokes equations with variable density  \eqref{1.1}-\eqref{1.3}, there hold the following essential properties (see, i.e.,  \cite{bib26,bib17,bib15,bib28}):

1. Positivity:
\begin{equation}
	\rho(\mathbf{x},t)>0.\notag
\end{equation}

2. Mass conservation:
\begin{equation}
	\int_\Omega\rho(\mathbf{x},t)\mathrm{d}\mathbf{x}=\int_\Omega\rho(\mathbf{x},0)\mathrm{d}\mathbf{x}.\nonumber
\end{equation}

3. Energy identical-relation:
\begin{equation}
	\frac{dE(\rho, u)}{dt}=-\mu\int_{\Omega}|\nabla u|^2\mathrm{d}\mathbf{x}+\int_{\Omega}fu\mathrm{d}\mathbf{x},\nonumber
\end{equation}
where the energy $E$ is defined by
\begin{equation}
	E=\frac{1}{2}\int_{\Omega}\rho |u|^2\mathrm{d}\mathbf{x}.\nonumber
\end{equation}

When designing numerical schemes for solving the Navier-Stokes equations with variable density \eqref{1.1}-\eqref{1.3}, it is important to ensure them to preserve the above  properties, which will improve the computational accuracy.

To preserve the positivity, we adopt the square transformation \cite{bib26,bib15,WLZ2024}
\begin{align}
	\rho(\mathbf{x}, t)=(\sigma(\mathbf{x}, t))^2,\label{de}
\end{align}
which  guarantees that the density is non-negative  regardless of the discrete scheme. Moreover, to derive the energy relation of the considered scheme, we adopt an equivalent formulation of the  momentum equation \eqref{1.2}  (see, i.e., \cite{bib15,WLZ2024}), which combining with \eqref{de} and \eqref{1.3} yields
\begin{align}
	\sigma_t+\nabla\cdot(\sigma u)&=0, &\mathrm{in}~ \Omega\times(0,T],\label{2.10}\\
	\sigma(\sigma u)_t-\mu\Delta  u+\rho(u\cdot\nabla)u+\frac{u}{2}\nabla\cdot(\rho u)+\nabla p&=f, &\mathrm{in}~ \Omega\times(0,T],\label{2.11}\\
	\nabla\cdot u&=0,&\mathrm{in}~ \Omega\times(0,T].\label{2.12}
\end{align}
We can see that the equation \eqref{1.1} is form-invariant for this transformation, and the initial data satisfies
\begin{equation}
	\sigma_0(\mathbf{x})=\sqrt{\rho_0(\mathbf{x})}>0\quad \mathrm{and} \quad 0<\sqrt{\rho_0^{min}}\leq \sigma(t, \mathbf{x})\leq \sqrt{\rho_0^{max}},   \quad~\mathrm{in}~\Omega,\label{tsig}
\end{equation}
by  cooperating with (\ref{1.4}) and the positivity of the density.

Furthermore, to derive the error estimate in the subsequent sections, we make the following assumptions on the solutions of the continuous model.
\begin{assumption}\label{as.2.2}
	The solutions of \eqref{2.10}-\eqref{2.12} satisfy the following regularities \cite{bib26,bib17}:
	\begin{align*}
		\sigma&\in C([0,T];H^3(\Omega)),~~~~\sigma_t\in L^{\infty}([0,T];H^1(\Omega))\cap L^{2}([0,T];H^2(\Omega)),\\
		\rho&\in C([0,T];H^3(\Omega))\cap C^1([0,T];H^2(\Omega)),\\
		u&\in C([0,T];H^3(\Omega)^2)\cap C^1([0,T];H^2(\Omega)^2),~~~~p\in C([0,T];H^2(\Omega)).
	\end{align*}
\end{assumption}
\section{Property-preserving scheme}\label{sec.3}
In this section,  we will propose a property-preserving fully discrete first order finite element method for solving the incompressible Navier-Stokes equations \eqref{2.10}-\eqref{2.12} with variable density. Although the positivity of the density is preserved by using  the square transformation \eqref{de}, the mass conservation will be lost when approximating this reformation form. Adopting the recovery technique in \cite{HS2021,bib27}, we recover  the discrete system's mass. In addition, via constructing a new recovery method, we also eliminate the numerical energy dissipation which is usually existent in the classical scheme, which ensures the energy identical-relation of the proposed scheme.

Let $N\in\mathbb{N}^+$ and $\tau=T/N(0<\tau<1)$, thus $0=t_0<t_1<\dots <t_k<t_{k+1}\dots <t_N=T$. Define $D_{\tau}g^{n+1}:=\frac{g^{n+1}-g^n}{\tau}$, then  the first order scheme for the equations \eqref{2.10}-\eqref{2.12} considered in this paper is as follows: Given $(\sigma_h^0, \rho_h^0, u_h^0,\tilde{u}_h^{0})=(\Pi_h\sigma_0, \Pi_h\rho_0, R_hu_0, R_hu_0)$, find $(\sigma_h^{n+1}, \tilde{u}_h^{n+1}, p_h^{n+1}, u_h^{n+1}, \rho_h^{n+1})\\
\in(W_h, V_h, M_h, V_h, W_h)$ for $0\leq n\leq N-1$ through the following steps:\\

\textbf{Step 1.} Find $\sigma_h^{n+1}\in W_h$ such that
\begin{equation}
	(D_{\tau}\sigma_h^{n+1}, r_h)+(\nabla \sigma_h^{n+1}\cdot u_h^n, r_h)+\frac{1}{2}(\sigma_h^{n+1}\nabla\cdot u_h^n,r_h)=0, \quad\forall r_h\in W_h; \label{3.1}
\end{equation}

\textbf{Step 2.} Find $(\tilde{u}_h^{n+1}, p_h^{n+1})\in (V_h, M_h)$ such that
\begin{equation}
	\begin{aligned}
		&(\sigma_h^{n+1}D_{\tau}(\sigma_h^{n+1}\tilde{u}_h^{n+1}), v_h)+\mu(\nabla \tilde{u}_h^{n+1}, \nabla v_h)+(\rho_h^{n}(u_h^n\cdot\nabla)\tilde{u}_h^{n+1}, v_h)\\
		&+\frac{1}{2}(\tilde{u}_h^{n+1}\nabla\cdot(\rho_h^{n}u_h^n), v_h)-(p_h^{n+1}, \nabla\cdot v_h)+(\nabla\cdot \tilde{u}_h^{n+1}, q_h)\\
		&=(f^{n+1}, v_h),\quad \forall (v_h, q_h)\in (V_h, M_h);\label{3.2}
	\end{aligned}
\end{equation}

\textbf{Step 3.} Find  $u_h^{n+1}\in V_h$ by
\begin{equation}
	u_h^{n+1}=\sqrt{\gamma_h^{n+1}}\tilde{u}_h^{n+1},\label{3.4}
\end{equation}

where
\begin{equation}
	\begin{small}
		\gamma_h^{n+1}\!=\!\left\{\begin{aligned}&1\!+\!\frac{||\sigma_h^{n+1}\tilde{u}_h^{n+1}\!-\!\sigma_h^n\tilde{u}_h^n||_{L^2}^2\!-\!||\sigma_h^n\tilde{u}_h^n||_{L^2}^2\!+\!||\sigma_h^nu_h^n||_{L^2}^2}{||\sigma_h^{n+1}\tilde{u}_h^{n+1}||_{L^2}^2}, &||\sigma_h^{n+1}\tilde{u}_h^{n+1}||_{L^2\!}\!\neq\!0\\
			&1,&||\sigma_h^{n+1}\tilde{u}_h^{n+1}||_{L^2}\!=\!0\end{aligned};\right. \label{3.3}
	\end{small}
\end{equation}

\textbf{Step 4.} Find $\rho_h^{n+1}\in W_h$ by
\begin{equation}
	(\rho_h^{n+1},r_h)=(\lambda_h^{n+1}\bar{\rho}_h^{n+1},r_h),\quad \forall r_h\in W_h, \label{3.7}\\
\end{equation}

where
\begin{align}
	\bar{\rho}_h^{n+1}&=(\sigma_h^{n+1})^2,\label{3.5}\\
	\lambda_h^{n+1}&=\frac{\int_{\Omega}\rho_h^n \mathrm{d}\mathbf{x}}{\int_{\Omega}\bar{\rho}_h^{n+1}\mathrm{d}\mathbf{x}}.\label{3.6}
\end{align}

In Steps 1-2, we get the approximation solutions $\sigma_h^{n+1},~\tilde{u}_h^{n+1}$  and $ p_h^{n+1}$ by solving two linear system. But, the mass conservation  and the original energy identical-relation is lost in Steps 1 and 2, respectively. To make the scheme to satisfy the properties of the continuous equations, we recover them in Steps 3-4, which are made up of several assignment operations and can be implemented efficiently.  For the scheme \eqref{3.1}-\eqref{3.6}, there holds the following Theorem.

\begin{theorem}\label{theorem3.1}
	The scheme \eqref{3.1}-\eqref{3.6} inherits the following physical properties of the continuous equations  \eqref{1.1}-\eqref{1.3} for $0\leq n\leq N-1$:\\
	
	1. Positivity: $\rho_h^{n+1}(\mathbf{x}){\geq}0$.\\
	
	2. Mass conservation: $\int_{\Omega}\rho_h^{n+1}\mathrm{d}\mathbf{x}=\int_{\Omega}\rho_h^0\mathrm{d}\mathbf{x}$.\\
	
	3. Energy identical-relation:
	\begin{equation}
		\begin{aligned}
			\left\{\begin{aligned}&D_{\tau}E_h^{n+1}=-\mu\int_{\Omega}|\nabla \tilde{u}_h^{n+1}|^2\mathrm{d}\mathbf{x}+\int_{\Omega}f^{n+1}\tilde{u}_h^{n+1}\mathrm{d}\mathbf{x},\quad ||\sigma_h^{n+1}\tilde{u}_h^{n+1}||_{L^2}\neq0\nonumber\\
				&{\tilde{D}_{\tau}\tilde{E}_h^{n+1}=-\mu\int_{\Omega}|\nabla \tilde{u}_h^{n+1}|^2\mathrm{d}\mathbf{x}+\int_{\Omega}f^{n+1}\tilde{u}_h^{n+1}\mathrm{d}\mathbf{x}, \quad ||\sigma_h^{n+1}\tilde{u}_h^{n+1}||_{L^2}=0} \end{aligned};\right.
		\end{aligned}
	\end{equation}	
	where the energy $D_{\tau}E_h^{n+1}=\frac{||\sigma_h^{n+1}u_h^{n+1}||_{L^2}^2-||\sigma_h^{n}u_h^{n}||_{L^2}^2}{2\tau}$ with $\gamma_h^{n+1}>0${, and $\tilde{D}_{\tau}\tilde{E}_h^{n+1}=\frac{||\sigma_h^{n+1}\tilde{u}_h^{n+1}||_{L^2}^2-||\sigma_h^{n}\tilde{u}_h^{n}||_{L^2}^2+||\sigma_h^{n+1}\tilde{u}_h^{n+1}-\sigma_h^{n}\tilde{u}_h^{n}||_{L^2}^2}{2\tau}$.}
	\end{theorem}
	\begin{proof}
{The proof consists of three parts.}

{\textit{Part I: Proof of the positivity. We only need to prove that $\rho_h^{n+1}\geq0$ if $\rho_h^{n}\geq0$.} If $\sigma_h^{n+1}(\mathbf{x})\equiv0$ almost for any $\mathbf{x}\in \Omega$,} noting that
\begin{align*}
	&(\nabla \sigma_h^{n+1}\cdot u_h^n, r_h)+\frac{1}{2}(\sigma_h^{n+1}\nabla\cdot u_h^n,r_h)\\
	&=(\nabla \cdot(\sigma_h^{n+1} u_h^n), r_h)-\frac{1}{2}(\sigma_h^{n+1}\nabla\cdot u_h^n,r_h)\\
	&=-(\sigma_h^{n+1}u_h^n, \nabla r_h)-\frac{1}{2}(\sigma_h^{n+1}\nabla\cdot u_h^n,r_h){\equiv 0},
\end{align*}
substituting this equation into \eqref{3.1}, we can derive that $\sigma_h^{n}{\equiv}0$,  which follows by $\sigma_h^{0}=\Pi_h\sigma^0{\equiv}0$. It is contradictory with \eqref{tsig}. Therefore, {there exists a subdomain $S\subset \Omega$ such that $meas(S)\neq\emptyset$ and ${\sigma}_h^{n+1}(\mathbf{x})\not\equiv0$ for all $\mathbf{x}\in S$, which follows $\int_\Omega\bar{\rho}_h^{n+1}\mathrm{d}\mathbf{x}=\int_\Omega({\sigma}_h^{n+1})^2\mathrm{d}\mathbf{x}>0$   by using \eqref{3.5}. Thus, $\lambda_h^{n+1}=\frac{\int_{\Omega}\rho_h^n \mathrm{d}\mathbf{x}}{\int_{\Omega}\bar{\rho}_h^{n+1}\mathrm{d}\mathbf{x}}\geq0$ and $\rho_h^{n+1}=\lambda_h^{n+1}\bar{\rho}_h^{n+1}=\lambda_h^{n+1}({\sigma}_h^{n+1})^2\geq0$} can be easily derived by combining the induction method with \eqref{3.5}-\eqref{3.6}.

%

{\textit{Part II: Proof of the mass conservation.}} Using \eqref{3.5} and \eqref{3.6}, we can deduce that mass conservation
\begin{equation} \int_{\Omega}\rho_h^{n+1}\mathrm{d}\mathbf{x}=\int_{\Omega}\lambda^{n+1}\bar{\rho}_h^{n+1}\mathrm{d}\mathbf{x}=\int_{\Omega}\rho_h^n\mathrm{d}\mathbf{x}.\nonumber
\end{equation}

{\textit{Part III: Proof of the energy identical-relation.}} Taking $(v_h, q_h)=(\tilde{u}_h^{n+1}, p_h^{n+1})$ in \eqref{3.2}, and applying
\begin{align*}
	&(\rho_h^{n}(u_h^n\cdot\nabla)\tilde{u}_h^{n+1}, \tilde{u}_h^{n+1})+\frac{1}{2}(\tilde{u}_h^{n+1}\nabla\cdot(\rho_h^{n}u_h^n), \tilde{u}_h^{n+1})\\
	&=\int_{\Omega}\rho_h^{n}(u_h^n\cdot\nabla)\tilde{u}_h^{n+1}\cdot \tilde{u}_h^{n+1}d\mathbf{x}+\frac{1}{2}\int_{\Omega}\tilde{u}_h^{n+1}\nabla\cdot(\rho_h^{n}u_h^n)\cdot\tilde{u}_h^{n+1}d\mathbf{x}\\
	&=\frac{1}{2}\int_{\Omega}\rho_h^{n}u_h^n\cdot\nabla|\tilde{u}_h^{n+1}|^2d\mathbf{x}+\frac{1}{2}\int_{\Omega}|\tilde{u}_h^{n+1}|^2\nabla\cdot(\rho_h^{n}u_h^n)d\mathbf{x}\\
	&=\int_{\partial\Omega}|\tilde{u}_h^{n+1}|^2\rho_h^{n}u_h^n\cdot\vec{\nu}d\mathbf{x}-\frac{1}{2}\int_{\Omega}|\tilde{u}_h^{n+1}|^2\nabla\cdot(\rho_h^{n}u_h^n)d\mathbf{x}\\
	&+\frac{1}{2}\int_{\Omega}|\tilde{u}_h^{n+1}|^2\nabla\cdot(\rho_h^{n}u_h^n)d\mathbf{x}\\
	&=0,
\end{align*}
we can get
\begin{equation*}
	(D_{\tau}(\sigma_h^{n+1}\tilde{u}_h^{n+1}), \sigma_h^{n+1}\tilde{u}_h^{n+1})+\mu\int_{\Omega}|\nabla\tilde{u}_h^{n+1}|^2\mathrm{d}\mathbf{x}=\int_{\Omega}f^{n+1}\tilde{u}_h^{n+1}\mathrm{d}\mathbf{x}.
\end{equation*}

If $||\sigma_h^{n+1}\tilde{u}_h^{n+1}||_{L^2}=0$, {the energy identical-relation is obvious by noting the equation $(D_{\tau}(\sigma_h^{n+1}\tilde{u}_h^{n+1}),\sigma_h^{n+1}\tilde{u}_h^{n+1})\!\!=\!\frac{||\sigma_h^{n+1}\tilde{u}_h^{n+1}||_{L^2}^2\!\!-\!||\sigma_h^{n}\tilde{u}_h^{n}||_{L^2}^2\!\!+\!||\sigma_h^{n+1}\tilde{u}_h^{n+1}\!\!-\!\sigma_h^{n}\tilde{u}_h^{n}||_{L^2}^2}{2\tau}$.}

If $||\sigma_h^{n+1}\tilde{u}_h^{n+1}||_{L^2}\neq0$, due to \eqref{3.4} and \eqref{3.3}, $(D_{\tau}(\sigma_h^{n+1}\tilde{u}_h^{n+1}), \sigma_h^{n+1}\tilde{u}_h^{n+1})$ can be expressed as follows:
\begin{equation}
	\begin{aligned}
		&(D_{\tau}(\sigma_h^{n+1}\tilde{u}_h^{n+1}), \sigma_h^{n+1}\tilde{u}_h^{n+1})\\	&=\frac{||\sigma_h^{n+1}\tilde{u}_h^{n+1}||_{L^2}^2-||\sigma_h^n\tilde{u}_h^n||_{L^2}^2+||\sigma_h^{n+1}\tilde{u}_h^{n+1}-\sigma_h^n\tilde{u}_h^n||_{L^2}^2}{2\tau}\\
		&=\frac{\gamma_h^{n+1}||\sigma_h^{n+1}\tilde{u}_h^{n+1}||_{L^2}^2-||\sigma_h^{n}u_h^{n}||_{L^2}^2}{2\tau}.\label{E}
	\end{aligned}
\end{equation}


Next, we will prove $\gamma_{h}^{n+1}>0$. When $||\sigma_h^{n+1}\tilde{u}_h^{n+1}||_{L^2}=0$, $\sigma_h^{n+1}=1$. Therefore, we only need consider the case when $||\sigma_h^{n+1}\tilde{u}_h^{n+1}||_{L^2}\neq0$ by using the induction method in the following.

(I) When $n=0$,  thanks to $\tilde{u}_h^0=u_h^0$, it yields $\gamma_h^1=1+\frac{||\sigma_h^{1}\tilde{u}_h^{1}-\sigma_h^0\tilde{u}_h^0||_{L^2}^2}{||\sigma_h^{1}\tilde{u}_h^{1}||_{L^2}^2}>0$.

(II) Assume $\gamma_h^m>0$ for all $1\leq m\leq N-1$.  Summing over $n$ from $0$ to $m$ in \eqref{E} and utilizing \eqref{3.4}, we can get
\begin{equation}
	\begin{aligned}			&||\sigma_h^{m+1}\tilde{u}_h^{m+1}||_{L^2}^2-||\sigma_h^0\tilde{u}_h^0||_{L^2}^2+\sum\limits_{i=0}^{m}||\sigma_h^{i+1}\tilde{u}_h^{i+1}-\sigma_h^{i}\tilde{u}_h^i||_{L^2}^2\\ 
		&=\gamma_h^{m+1}||\sigma_h^{m+1}\tilde{u}_h^{m+1}||_{L^2}^2-||\sigma_h^0u_h^0||_{L^2}^2,\nonumber
	\end{aligned}
\end{equation}
which implies, by noting $\tilde{u}_h^0=u_h^0$ again, that
\begin{equation} \gamma_h^{m+1}=1+\frac{\sum\limits_{i=0}^{m}||\sigma_h^{i+1}\tilde{u}_h^{i+1}-\sigma_h^{i}\tilde{u}_h^i||_{L^2}^2}{||\sigma_h^{m+1}\tilde{u}_h^{m+1}||_{L^2}^2}>0.\nonumber
\end{equation}

Therefore, it always holds $\gamma_{h}^{n+1}>0$ for all $0\leq n\leq N-1$. It follows by combining with \eqref{E} that
\begin{equation}
	\begin{aligned}
		(D_{\tau}(\sigma_h^{n+1}\tilde{u}_h^{n+1}), \sigma_h^{n+1}\tilde{u}_h^{n+1})&=\frac{||\sigma_h^{n+1}\sqrt{\gamma_h^{n+1}}\tilde{u}_h^{n+1}||_{L^2}^2-||\sigma_h^nu_h^n||_{L^2}^2}{2\tau}\\
		&=\frac{||\sigma_h^{n+1}u_h^{n+1}||_{L^2}^2-||\sigma_h^nu_h^n||_{L^2}^2}{2\tau}\\
		&=D_{\tau}E_h^{n+1},\nonumber
	\end{aligned}
\end{equation}
which indicates the original energy identical-relation. The proof is completed.
	\end{proof}
	\begin{remark}
Although the energy identical-relation was considered in \cite{bib28}, their energy  is a modified one based on the scalar auxiliary variable method, and their scheme doesn't preserve the positivity of the density. Moreover, if the density $\rho$ is a constant, the equations (\ref{1.1})-(\ref{1.3}) reduce to the classical Navier-Stokes equations, and the energy identical-relation derived in Theorem \ref{theorem3.1} holds in this case, too.  Different from the energy dissipation law which has been widely investigated for the discrete scheme of the Navier-Stokes equations with constant and variable densities by assuming that the body force $f=0$ (see, i.e., \cite{LSL2022,bib26,Q2024}), the energy law proved here for the scheme \eqref{3.1}-\eqref{3.6} is an equality, which is a discrete analogue of the continuous property presented in Section 2.3. If $-\mu\int_{\Omega}|\nabla \tilde{u}_h^{n+1}|^2\mathrm{d}\mathbf{x}+\int_{\Omega}f^{n+1}\tilde{u}_h^{n+1}\mathrm{d}\mathbf{x}\leq 0$ (the body force $f=0$ can be seen as a special case under this condition), the energy of the scheme \eqref{3.1}-\eqref{3.6} will obey the dissipation law.  Otherwise, the energy of the scheme \eqref{3.1}-\eqref{3.6} will increase, which means that the energy from the external body force $f$ is greater than the dissipation part of the system. This is consistent with the continuous property. The numerical example shown in Section 5 will confirm this fact.
\end{remark}

\begin{remark}
	If $||\sigma_h^{n+1}\tilde{u}_h^{n+1}||_{L^2}=0$,  the kinetic energy of the numerical scheme is zero. Although only the energy-identical-relation with a numerical dissipation term $\frac{||\sigma_h^{n+1}\tilde{u}_h^{n+1}-\sigma_h^{n}\tilde{u}_h^{n}||_{L^2}^2}{2\tau}$ is deduced in this case, $||\sigma_h^{n+1}\tilde{u}_h^{n+1}||_{L^2}$ can't be exactly equal to zero due to the existence of the round-off error in the practical simulation.
	\end{remark}
	
	\section{Error estimate}\label{sec.4}
	In this section, we will deduce the error estimate of the scheme \eqref{3.1}-\eqref{3.6}. Firstly, from the definitions of the initial data  and properties of the projections presented in Section \ref{sec.2}, we have the following results for the initial data in the scheme
	\begin{equation}
||\sigma(t_0)-\sigma_h^0||_{L^2}^2+||\rho(t_0)-\rho_h^0||_{L^2}^2+||u(t_0)-u_h^0||_{L^2}^2\leq C(\tau^2+h^4).\label{2.13}
\end{equation}

Then, for simplicity, we write $\sigma^n=\sigma(t_n,\mathbf{x}), u^n=u(t_n,\mathbf{x}), \rho^n=\rho(t_n,\mathbf{x}), p^n=p(t_n,\mathbf{x})$ as exact solution.  According to the $L^2$ projection and Stokes projection recalled in Section 2,  we can split  the errors as
\begin{align*}
e_{\sigma h}^n&=\sigma^n-\sigma_h^n=(\sigma^n-\Pi_h\sigma^n)+(\Pi_h\sigma^n-\sigma_h^n):=\eta_{\sigma h}^n+\theta_{\sigma h}^n, \\
\bar{e}_{\rho h}^n&=\rho^n-\bar{\rho}_h^n=(\rho^n-\Pi_h\rho^n)+(\Pi_h\rho^n-\bar{\rho}_h^n):=\eta_{\rho h}^n+\bar{\theta}_{\rho h}^n, \\
e_{\rho h}^n&=\rho^n-\rho_h^n=(\rho^n-\Pi_h\rho^n)+(\Pi_h\rho^n-\rho_h^n):=\eta_{\rho h}^n+\theta_{\rho h}^n, \\
\tilde{e}_{uh}^n&=u^n-\tilde{u}_h^n=(u^n-R_hu^n)+(R_hu^n-\tilde{u}_h^n):=\eta_{uh}^n+\tilde{\theta}_{uh}^n, \\
e_{uh}^n&=u^n-u_h^n=(u^n-R_hu^n)+(R_hu^n-u_h^n):=\eta_{uh}^n+\theta_{uh}^n, \\
e_{ph}^n&=p^n-p_h^n=(p^n-Q_hp^n)+(Q_hp^n-p_h^n):=\eta_{ph}^n+\theta_{ph}^n.
\end{align*}

On the other hand, from \eqref{2.10}-\eqref{2.11}, we can derive
\begin{equation}
(D_{\tau}\sigma^{n+1}, r)+(\nabla\sigma^{n+1}\cdot u^n, r)+\frac{1}{2}(\sigma^{n+1}\nabla\cdot u^n,r)=(R_{\sigma}^{n+1}, r), ~~\forall r\in W,\label{3.18}
\end{equation}
and
\begin{equation}
\begin{aligned}
	&(\sigma^{n+1}D_{\tau}(\sigma^{n+1}u^{n+1}), v)+\mu(\nabla u^{n+1}, \nabla v)+(\rho^{n}(u^n\cdot\nabla)u^{n+1}, v)\\
	&+\frac{1}{2}(u^{n+1}\nabla\cdot(\rho^{n}u^n), v)-(\nabla\cdot v, p^{n+1})+(\nabla\cdot u^{n+1}, q)\\
	&=(f^{n+1}, v)+(R_{u}^{n+1}, v), ~~\forall (v,q)\in V\times M,\label{3.19}
\end{aligned}
\end{equation}
where
\begin{align*}
R_{\sigma }^{n+1}&=D_{\tau}\sigma^{n+1}-\sigma_t^{n+1}+\nabla\sigma^{n+1}(u^n-u^{n+1}),\\
R_{u}^{n+1}&=\sigma^{n+1}D_{\tau}(\sigma^{n+1}u^{n+1})-\sigma^{n+1}(\sigma u)_t(t_{n+1})\\
&+(\rho^n-\rho^{n+1})(u^n\cdot\nabla)u^{n+1}+\rho^{n+1}((u^n-u^{n+1})\cdot\nabla)u^{n+1}\\
&+\frac{u^{n+1}}{2}\nabla\cdot((\rho^{n}-\rho^{n+1})u^n)+\frac{u^{n+1}}{2}\nabla\cdot(\rho^{n+1}(u^n-u^{n+1})).
\end{align*}
For the above two truncation errors, there holds the following convergence order.
\begin{lemma}\label{lem.3.2}
Under Assumption \ref{as.2.2}, it is valid that
\begin{equation}
	||R_{\sigma }^{n+1}||_{L^2}^2+||R_{u}^{n+1}||_{L^2}^2\leq C\tau^2.\label{3.20}
\end{equation}
\end{lemma}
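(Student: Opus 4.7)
The plan is to estimate each summand of $R_{\sigma 1}^{n+1}$ and $R_{u1}^{n+1}$ pointwise in $L^2(\Omega)$ by rewriting each time-difference as an integral and applying Taylor's theorem with integral remainder, then to sum the squared norms with weight $\tau$ and convert everything into an integral over $[0,T]$. The underlying identities, for a smooth $g(t)$, are
\begin{equation*}
g^{n+1}-g^n=\int_{t_n}^{t_{n+1}}g_t(s)\,ds,\qquad D_\tau g^{n+1}-g_t^{n+1}=-\frac{1}{\tau}\int_{t_n}^{t_{n+1}}(s-t_n)g_{tt}(s)\,ds,
\end{equation*}
which, after Cauchy--Schwarz in time, give
\begin{equation*}
\tau\sum_{n=0}^{N-1}\|D_\tau g^{n+1}-g_t^{n+1}\|_{L^2}^2\leq C\tau^2\!\int_0^T\!\|g_{tt}\|_{L^2}^2\,ds,\quad \tau\sum_{n=0}^{N-1}\|g^{n+1}-g^n\|_{L^2}^2\leq C\tau^2\!\int_0^T\!\|g_t\|_{L^2}^2\,ds.
\end{equation*}
All bounds collapse to the stated $C\tau^2$ once the time integrals on the right are finite under Assumption \ref{as.2.2}.

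For $R_{\sigma 1}^{n+1}$, the piece $D_\tau\sigma^{n+1}-\sigma_t^{n+1}$ falls directly into the first template above, and the regularity $\sigma_{tt}\in L^2(L^2)$ needed here can be extracted by differentiating the transport equation $\sigma_t+u\cdot\nabla\sigma+\tfrac12\sigma\nabla\cdot u=0$ in time and invoking the control of $u,\sigma,u_t,\sigma_t$ in Assumption \ref{as.2.2}. The second piece $\nabla\sigma^{n+1}(u^n-u^{n+1})$ is bounded by $\|\nabla\sigma^{n+1}\|_{L^\infty}\|u^n-u^{n+1}\|_{L^2}$; the prefactor is uniformly controlled via the two-dimensional embedding $H^3\hookrightarrow W^{1,\infty}$ combined with $\sigma\in C([0,T];H^3)$, and the difference is handled by the second template with $u_t$ supplied by $u\in C^1([0,T];H^2)$.

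For $R_{u1}^{n+1}$, I would dispatch the six summands in turn. The term $\sigma^{n+1}\bigl[D_\tau(\sigma^{n+1}u^{n+1})-(\sigma u)_t(t_{n+1})\bigr]$ is the Taylor-remainder template applied to $g=\sigma u$, where $(\sigma u)_{tt}=\sigma_{tt}u+2\sigma_t u_t+\sigma u_{tt}$ is controlled in $L^2(L^2)$ from Assumption \ref{as.2.2} (again extracting $\sigma_{tt}$ from the density equation when required). Each of the remaining five terms factors cleanly into a smooth prefactor times one of the differences $\rho^n-\rho^{n+1}$ or $u^n-u^{n+1}$; prefactors such as $(u^n\cdot\nabla)u^{n+1}$, $\rho^{n+1}$, $u^{n+1}$, $u^n$ are controlled in $L^\infty$ via the two-dimensional embedding $H^2\hookrightarrow L^\infty$ together with the $C([0,T];H^3)$ regularity. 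For the two divergence terms, I would apply the product rule $\nabla\cdot(aw)=\nabla a\cdot w+a\,\nabla\cdot w$ to transfer one derivative onto the difference, after which $\nabla\rho_t$ and $\nabla u_t$ sit in $L^\infty(L^2)$ thanks to $\rho,u\in C^1([0,T];H^2)$.

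The main obstacle I anticipate is purely bookkeeping: ensuring that each of the six summands of $R_{u1}^{n+1}$ is split so that every prefactor has an $L^\infty$ bound actually granted (directly or by embedding) by Assumption \ref{as.2.2}, and that no term tacitly demands regularity of $\sigma_{tt}$, $\rho_{tt}$, or $u_{tt}$ beyond what is recoverable from time-differentiating the continuous system. Once every term is placed in the form (time integral of a first or second time derivative)\,$\times$\,(uniformly bounded quantity), Cauchy--Schwarz in time followed by summation over $n$ immediately yields the claimed estimate \eqref{3.20}.
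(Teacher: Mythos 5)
Your proposal is correct and follows essentially the same route as the paper: the paper's proof simply invokes Taylor expansion to write $D_\tau g^{n+1}-g_t(t_{n+1})=O(\tau)$ and bounds the remaining terms by $C\|u^n-u^{n+1}\|_{L^2}^2+C\|\rho^n-\rho^{n+1}\|_{L^2}^2\leq C\tau^2$ before summing. Your version is in fact more careful than the paper's two-line argument, since you track explicitly that the Taylor remainder needs $\sigma_{tt}$ (and $(\sigma u)_{tt}$) in $L^2(L^2)$ — regularity not listed in Assumption \ref{as.2.2} — and you indicate how to recover it from the transport equation, a point the paper silently elides.
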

\begin{proof}
By the Taylor's expansion,  we can easily get
\begin{equation}
	D_{\tau}g^{n+1}-g_t(t_{n+1})=O(\tau),\label{after3.19}
\end{equation}
for any smooth enough function $g$. Based on the expressions for $R_{\sigma }^{n+1}$ and $R_{u}^{n+1}$, along with \eqref{after3.19} and Assumption \ref{as.2.2}, we can deduce
\begin{equation}
	||R_{\sigma }^{n+1}||_{L^2}^2\leq C\tau^2+C||u^n-u^{n+1}||_{L^2}^2\leq C\tau^2,\nonumber
\end{equation}
and
\begin{equation}
	\begin{aligned}
		||R_{u}^{n+1}||_{L^2}^2\leq C\tau^2+C||\rho^n-\rho^{n+1}||_{L^2}^2+C||u^n-u^{n+1}||_{L^2}^2\leq C\tau^2.\nonumber
	\end{aligned}
\end{equation}
The proof is completed.
\end{proof}

Moreover, setting $r=r_h\in W_h\subset W$ and $(v, q)=(v_h,q_h)\in (V_h,M_h)\subset(V, M)$ in \eqref{3.18} and \eqref{3.19},  subtracting \eqref{3.1} and \eqref{3.2}  from \eqref{3.18} and \eqref{3.19}, respectively, we have the error equations
\begin{equation}
\begin{aligned}
	&(D_{\tau}(e_{\sigma h}^{n+1}), r_h)+(\nabla\sigma^{n+1}\cdot e_{uh}^n, r_h)+(u_h^n\cdot\nabla e_{\sigma h}^{n+1}, r_h)\\
	&+\frac{1}{2}(\sigma^{n+1}\nabla\cdot e_{uh}^n,r_h)+\frac{1}{2}(\nabla\cdot u_h^ne_{\sigma h}^{n+1},r_h)=(R_{\sigma }^{n+1}, r_h),\label{3.21}
\end{aligned}
\end{equation}
and
\begin{align*}
&(e_{\sigma h}^{n+1}D_{\tau}(\sigma^{n+1}u^{n+1}),v_h)\!\!+\!(\sigma_h^{n+1}D_{\tau}(e_{\sigma h}^{n+1}u^{n+1}),v_h)\!\!+\!(\sigma_h^{n+1}D_{\tau}(\sigma_h^{n+1}\tilde{e}_{uh}^{n+1}),v_h)\\
&+\mu(\nabla\tilde{e}_{uh}^{n+1},\nabla v_h)+(e_{\rho h}^n(u^n\cdot\nabla)u^{n+1},v_h)+(\rho_h^n(e_{uh}^n\cdot\nabla)u^{n+1},v_h)\nonumber\\
&+(\rho_h^n(u_h^n\cdot\nabla)\tilde{e}_{uh}^{n+1},v_h)+\frac{1}{2}(u^{n+1}\nabla\cdot(e_{\rho h}^nu^n),v_h)+\frac{1}{2}(u^{n+1}\nabla\cdot(\rho_h^ne_{uh}^n),v_h)\nonumber\\
&+\frac{1}{2}(\tilde{e}_{uh}^{n+1}\nabla\cdot(\rho_h^nu_h^n),v_h)-(\nabla\cdot v_h,e_{ph}^{n+1})+(\nabla\cdot\tilde{e}_{uh}^{n+1},q_h)=(R_{u}^{n+1},v_h).
\end{align*}
Thanks to \eqref{3.13}-\eqref{3.15}, the above error equation can be written as
\begin{equation}
\begin{aligned}
	&(\sigma_h^{n+1}D_{\tau}(\sigma_h^{n+1}\tilde{\theta}_{uh}^{n+1}), v_h)+\mu(\nabla\tilde{\theta}_{uh}^{n+1}, \nabla v_h)-(\nabla\cdot v_h, \theta_{ph}^{n+1})\\
	&+(\nabla\cdot \tilde{\theta}_{uh}^{n+1}, q_h)=(R_{u}^{n+1}, v_h)-\sum\limits_{i=1}^{9}(Y_{i}^{n+1}, v_h),\label{3.22}
\end{aligned}
\end{equation}
where	\begin{align*}
&Y_{1}^{n+1}=e_{\sigma h}^{n+1}D_{\tau}(\sigma^{n+1}u^{n+1}), \\
&Y_{2}^{n+1}=\sigma_h^{n+1}D_{\tau}(e_{\sigma h}^{n+1}u^{n+1}), \\
&Y_{3}^{n+1}=\sigma_h^{n+1}D_{\tau}(\sigma_h^{n+1}\eta_{uh}^{n+1}), \\
&Y_{4}^{n+1}=e_{\rho h}^{n}(u^n\cdot\nabla)u^{n+1}, \\
&Y_{5}^{n+1}=\rho_h^{n}(e_{uh}^n\cdot\nabla)u^{n+1}, \\
&Y_{6}^{n+1}=\rho_h^{n}(u_h^n\cdot\nabla)\tilde{e}_{uh}^{n+1},\\
&Y_{7}^{n+1}=\frac{1}{2}u^{n+1}\nabla\cdot(e_{\rho h}^{n}u^n), \\
&Y_{8}^{n+1}=\frac{1}{2}u^{n+1}\nabla\cdot(\rho_h^{n}e_{uh}^n),\\
&Y_{9}^{n+1}=\frac{1}{2}\tilde{e}_{uh}^{n+1}\nabla\cdot(\rho_h^{n}u_h^n).
\end{align*}

Next, we will analyze the error equations \eqref{3.21} and \eqref{3.22} in detail. For the error equation \eqref{3.21}, there holds the following lemma.
\begin{lemma}\label{lem.3.3}
Under Assumptions \ref{as.2.2}, there exists a constant $\tau_1>0$, if  $\tau<\tau_1$, then it is valid, for all $0\leq n\leq N-1$, that
\begin{equation}
	\begin{aligned}
		&||\theta_{\sigma h}^{n+1}||_{L^2}^2+\sum\limits_{i=0}^n||\theta_{\sigma h}^{i+1}-\theta_{\sigma h}^i||_{L^2}^2\\
		&\leq C(\tau^2+h^4)+C\tau\sum\limits_{i=0}^n(h^4||\nabla u_{h}^i||_{L^2}^2+h^4||u_{h}^i||_{L^\infty}^2+||\nabla \theta_{uh}^i||_{L^2}^2).\label{3.23}
	\end{aligned}
\end{equation}
\end{lemma}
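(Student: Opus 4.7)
The strategy is to test the error equation \eqref{3.21} with $r_h = \theta_{\sigma h}^{n+1}$ and substitute the splittings $e_{\sigma h}^{n+1} = \eta_{\sigma h}^{n+1} + \theta_{\sigma h}^{n+1}$ and $e_{uh}^n = \eta_{uh}^n + \theta_{uh}^n$ throughout. The time derivative term produces
\begin{equation*}
(D_\tau \theta_{\sigma h}^{n+1}, \theta_{\sigma h}^{n+1}) = \frac{1}{2\tau}\bigl(\|\theta_{\sigma h}^{n+1}\|_{L^2}^2 - \|\theta_{\sigma h}^n\|_{L^2}^2 + \|\theta_{\sigma h}^{n+1}-\theta_{\sigma h}^n\|_{L^2}^2\bigr),
\end{equation*}
which, after summation in $n$, yields exactly the left-hand side of \eqref{3.23}. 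The companion contribution $(D_\tau \eta_{\sigma h}^{n+1}, \theta_{\sigma h}^{n+1})$ vanishes identically by the $L^2$-orthogonality \eqref{3.13}.

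The structural observation that makes the estimate close is that $u_h^n \in V_h \subset (H_0^1(\Omega))^2$, so integration by parts gives $(u_h^n \cdot \nabla \theta_{\sigma h}^{n+1}, \theta_{\sigma h}^{n+1}) = -\tfrac{1}{2}(\nabla\cdot u_h^n, |\theta_{\sigma h}^{n+1}|^2)$, which exactly cancels $\tfrac{1}{2}(\nabla\cdot u_h^n \, \theta_{\sigma h}^{n+1}, \theta_{\sigma h}^{n+1})$. The surviving right-hand terms to estimate are $(R_{\sigma 1}^{n+1}, \theta_{\sigma h}^{n+1})$, $(\nabla\sigma^{n+1}\cdot e_{uh}^n, \theta_{\sigma h}^{n+1})$, $(u_h^n \cdot \nabla \eta_{\sigma h}^{n+1}, \theta_{\sigma h}^{n+1})$, $\tfrac{1}{2}(\sigma^{n+1}\nabla\cdot e_{uh}^n, \theta_{\sigma h}^{n+1})$, and $\tfrac{1}{2}(\nabla\cdot u_h^n\,\eta_{\sigma h}^{n+1}, \theta_{\sigma h}^{n+1})$. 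These are controlled by Hölder/Young inequalities, Assumption \ref{as.2.2} (for $\|\sigma^{n+1}\|_{L^\infty}$ and $\|\nabla\sigma^{n+1}\|_{L^\infty}$), Lemma \ref{lem.3.2} for the truncation error, and the projection estimates \eqref{3.17}, which supply $\|\eta_{\sigma h}^{n+1}\|_{L^2} = O(h^3)$, $\|\nabla \eta_{\sigma h}^{n+1}\|_{L^2} = O(h^2)$, and likewise for $\eta_{uh}^n$.

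The delicate piece is the convective term $(u_h^n \cdot \nabla \eta_{\sigma h}^{n+1}, \theta_{\sigma h}^{n+1})$. Decomposing $u_h^n = R_h u^n - \theta_{uh}^n$, the $R_h u^n$-part is tame because $\|R_h u^n\|_{L^\infty}$ is bounded by $\|u\|_{H^3}$ through Assumption \ref{as.2.2}. The $\theta_{uh}^n$-part is where the $h^2\|\theta_{uh}^n\|_{L^2}^2$ contribution in \eqref{3.23} originates: applying the inverse inequality \eqref{2.2} gives $\|\theta_{uh}^n\|_{L^\infty} \le Ch^{-1}\|\theta_{uh}^n\|_{L^2}$, which combined with $\|\nabla\eta_{\sigma h}^{n+1}\|_{L^2}^2 \le Ch^4$ and Young's inequality produces the required $h^2\|\theta_{uh}^n\|_{L^2}^2$. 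The $\|\nabla\theta_{uh}^n\|_{L^2}^2$ on the right of \eqref{3.23} stems from the divergence contributions such as $\tfrac{1}{2}(\sigma^{n+1}\nabla\cdot\theta_{uh}^n, \theta_{\sigma h}^{n+1})$, which cannot be reduced further at this stage and must be passed to the forthcoming velocity estimate.

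Multiplying the resulting inequality by $2\tau$, an absorbable term $C\tau\|\theta_{\sigma h}^{n+1}\|_{L^2}^2$ appears on the right, which is moved to the left by choosing $\tau < \tau_1 := 1/(2C)$. Summing from $0$ to $n$, using $\theta_{\sigma h}^0 = 0$ from \eqref{2.13} and invoking the Gronwall inequality (Lemma \ref{lem.7}) yields \eqref{3.23}. The main obstacle is arranging the cross terms so that only the three specific $\theta_{uh}$-norms listed in \eqref{3.23} appear on the right; this forces both the integration-by-parts cancellation noted above and the delicate balance between the inverse inequality and the projection gradient bound in the convective term.
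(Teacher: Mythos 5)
Your proposal is correct and follows essentially the same route as the paper: test \eqref{3.21} with (a multiple of) $\theta_{\sigma h}^{n+1}$, use the $L^2$-orthogonality \eqref{3.13} to kill the $\eta$-part of the time difference, exploit the exact cancellation $(u_h^n\cdot\nabla\theta_{\sigma h}^{n+1},\theta_{\sigma h}^{n+1})+\tfrac12(\nabla\cdot u_h^n\,\theta_{\sigma h}^{n+1},\theta_{\sigma h}^{n+1})=0$ from $u_h^n\in V_h$, control $\|u_h^n\|_{L^\infty}$ and $\|\nabla u_h^n\|_{L^\infty}$ via the inverse inequalities against the projection bounds $\|\eta_{\sigma h}^{n+1}\|_{L^2}=O(h^3)$, $\|\nabla\eta_{\sigma h}^{n+1}\|_{L^2}=O(h^2)$, and close with Young, summation and Gronwall. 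The identification of where the $h^2\|\theta_{uh}^n\|_{L^2}^2$ and $\|\nabla\theta_{uh}^n\|_{L^2}^2$ terms originate matches the paper's estimates exactly.
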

\begin{proof}
Firstly,  taking $r_h=2\tau\theta_{\sigma h}^{n+1}\in W_h$ in \eqref{3.21} and employing \eqref{3.13} yield
\begin{equation}
	\begin{aligned}
		&||\theta_{\sigma h}^{n+1}||_{L^2}^2-||\theta_{\sigma h}^n||_{L^2}^2+||\theta_{\sigma h}^{n+1}-\theta_{\sigma h}^n||_{L^2}^2\\
		&\leq -2\tau(\nabla\sigma^{n+1}e_{uh}^n, \theta_{\sigma h}^{n+1})-2\tau(u_h^n\cdot\nabla e_{\sigma h}^{n+1}, \theta_{\sigma h}^{n+1})-\tau(\sigma^{n+1}\nabla\cdot e_{uh}^n,\theta_{\sigma h}^{n+1})\\
		&-\tau(\nabla\cdot u_h^ne_{\sigma h}^{n+1},\theta_{\sigma h}^{n+1})+(R_{\sigma }^{n+1}, 2\tau\theta_{\sigma h}^{n+1}).\label{3.24}
	\end{aligned}
\end{equation}
Then, using \eqref{3.16}, the Poincare inequality and the Young inequality, we can obtain
\begin{equation*}
	\begin{aligned}
		&|2\tau(\nabla\sigma^{n+1}e_{uh}^n, \theta_{\sigma h}^{n+1})+\tau(\sigma^{n+1}\nabla\cdot e_{uh}^n,\theta_{\sigma h}^{n+1})|\\
		\leq& C\tau||\nabla\sigma^{n+1}||_{L^{\infty}}||\nabla e_{uh}^n||_{L^2}||\theta_{\sigma h}^{n+1}||_{L^2}+C\tau||\sigma^{n+1}||_{L^{\infty}}||\nabla\cdot e_{uh}^n||_{L^2}||\theta_{\sigma h}^{n+1}||_{L^2}\\
		\leq& C\tau(||\nabla\eta_{uh}^n||_{L^2}\!\!+\!\!||\nabla\theta_{uh}^n||_{L^2})||\theta_{\sigma h}^{n+1}||_{L^2}\!\!+\!C\tau(||\nabla\eta_{uh}^n||_{L^2}\!\!+\!||\nabla \theta_{uh}^n||_{L^2})||\theta_{\sigma h}^{n+1}||_{L^2}\\
		\leq& C\tau h^4+C\tau||\theta_{\sigma h}^{n+1}||_{L^2}^2+C\tau ||\nabla \theta_{uh}^n||_{L^2}^2. \label{3.25}
	\end{aligned}
\end{equation*}

Then, using \eqref{2.1} and \eqref{3.16} 
we arrive at
\begin{equation*}
\begin{aligned}
	&|2\tau(u_h^n\cdot\nabla e_{\sigma h}^{n+1}, \theta_{\sigma h}^{n+1})+\tau(\nabla\cdot u_h^ne_{\sigma h}^{n+1},\theta_{\sigma h}^{n+1})|\\
	&=|2\tau(u_h^n\cdot\nabla\eta_{\sigma h}^{n+1},\theta_{\sigma h}^{n+1})+2\tau(u_h^n\cdot\nabla\theta_{\sigma h}^{n+1},\theta_{\sigma h}^{n+1})\\
	&+\tau(\nabla\cdot u_h^n\theta_{\sigma h}^{n+1},\theta_{\sigma h}^{n+1})+\tau(\nabla\cdot u_h^n\eta_{\sigma h}^{n+1},\theta_{\sigma h}^{n+1})|\\
	&\leq C\tau||u_h^n||_{L^\infty}||\nabla\eta_{\sigma h}^{n+1}||_{L^2}||\theta_{\sigma h}^{n+1}||_{L^2}+\tau(u_h^n,\nabla|\theta_{\sigma h}^{n+1}|^2)+\tau(\nabla\cdot u_h^n,(\theta_{\sigma h}^{n+1})^2)\\
	&+C\tau||\nabla u_h^n||_{L^\infty}||\eta_{\sigma h}^{n+1}||_{L^2}||\theta_{\sigma h}^{n+1}||_{L^2}\\
	&\leq C\tau h^2||u_{h}^n||_{L^\infty}||\theta_{\sigma h}^{n+1}||_{L^2}\!+\!C\tau h^2||\nabla u_{h}^n||_{L^2}||\theta_{\sigma h}^{n+1}||_{L^2}\\
	&\leq C\tau||\theta_{\sigma h}^{n+1}||_{L^2}^2+C\tau h^4||u_{h}^n||_{L^\infty}^2+C\tau h^4||\nabla u_{h}^n||_{L^2}^2.\label{3.28}
\end{aligned}
\end{equation*}

Finally, combining \eqref{3.20} with the Young inequality, we can deduce
\begin{equation*}
|(R_{\sigma }^{n+1}, 2\tau\theta_{\sigma h}^{n+1})|\leq C\tau||R_{\sigma }^{n+1}||_{L^2}^2+C\tau||\theta_{\sigma h}^{n+1}||_{L^2}^2\leq C\tau^3+C\tau||\theta_{\sigma h}^{n+1}||_{L^2}^2.\label{3.29}
\end{equation*}
Putting these inequalities into  \eqref{3.24} and taking a summation, we have
\begin{equation*}
\begin{aligned}
	||\theta_{\sigma h}^{n+1}||_{L^2}^2+\sum\limits_{i=0}^n||\theta_{\sigma h}^{i+1}-\theta_{\sigma h}^i||_{L^2}^2&\leq C\tau\sum\limits_{i=0}^n(\tau^2+h^4)+C\tau\sum\limits_{i=0}^n||\theta_{\sigma h}^{i+1}||_{L^2}^2\\
	&+C\tau\sum\limits_{i=0}^n(h^4||\nabla u_{h}^i||_{L^2}^2+h^4||u_{h}^i||_{L^\infty}^2+||\nabla \theta_{uh}^i||_{L^2}^2),\label{3.30}
\end{aligned}
\end{equation*}
which implies \eqref{3.23}  by applying the Gronwall inequality \eqref{2.6} and the assumption on the time step $\tau$. The proof is completed.
\end{proof}

To estimate  the error equation \eqref{3.22}, we first analyze the term $Y_2^{n+1}$, which is more complicated compared with other terms.
\begin{lemma}\label{lemma4.3} Under Assumption \ref{as.2.2},  it is valid for the term $Y_2^{n+1}$ in \eqref{3.22}, for $0\leq n\leq N-1$, that\\
\begin{equation}
\begin{aligned}
	&2\tau|(Y_{2}^{n+1}, \tilde{\theta}_{uh}^{n+1})|\\
	&\leq \frac{\mu\tau}{9}||\nabla \tilde{\theta}_{uh}^{n+1}||_{L^2}^2+C\tau^3||u_h^n||_{L^{\infty}}^2\\
	&+C\tau h^4||u_h^n||_{L^{\infty}}^2||\sigma_h^{n+1}||_{L^\infty}^2\!+\!C\tau ||\sigma_h^{n+1}||_{L^\infty}^2||D_{\tau}\theta_{\sigma h}^{n+1}||_{L^3}^2(h^4\!\!+\!||\nabla\theta_{uh}^n||_{L^2}^2)\\
	&+C\tau h^2||D_{\tau}\theta_{\sigma h}^{n+1}||_{L^2}^2||\sigma_h^{n+1}||_{L^{\infty}}^2(||\nabla u_h^n||_{L^3}^2+||u_h^n||_{L^{\infty}}^2)\\
	&+C\tau h^2||\nabla e_{\sigma h}^{n+1}||_{L^2}^2||u_h^n||_{L^{\infty}}^2||\sigma_h^{n+1}||_{L^{\infty}}^2(||\nabla u_h^n||_{L^3}^2+||u_h^n||_{L^{\infty}}^2)\\
	&+C\tau||u_h^n||_{L^{\infty}}^2||\sigma_h^{n+1}||_{L^\infty}^2||\nabla\theta_{uh}^n||_{L^2}^2+C\tau||\nabla u_h^n||_{L^{3}}^2||e_{\sigma h}^{n+1}||_{L^2}^2||u_h^n||_{L^{\infty}}^2\\
	&+C\tau||e_{\sigma h}^{n+1}||_{L^2}^2(||u_h^n||_{W^{1,3}}^2||\sigma_h^{n+1}||_{L^{\infty}}^2||u_h^n||_{L^{\infty}}^2)\\
	&+C\tau||e_{\sigma h}^{n+1}||_{L^2}^2(||u_h^n||_{L^{\infty}}^4||\sigma_h^{n+1}||_{W^{1,3}}^2+||u_h^n||_{L^{\infty}}^4||\sigma_h^{n+1}||_{L^{\infty}}^2)\\
	&+C\tau||\sigma_h^{n+1}||_{L^{\infty}}^2||e_{\sigma h}^{n+1}||_{L^2}^2+C\tau h^4||\sigma_h^{n+1}||_{L^{\infty}}^2.
	\label{3.47}
\end{aligned}
\end{equation}
\end{lemma}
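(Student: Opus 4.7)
The plan is to expand $Y_2^{n+1}=\sigma_h^{n+1}D_{\tau}(e_{\sigma h}^{n+1}u^{n+1})$ using the discrete Leibniz rule
\begin{equation*}
D_{\tau}(e_{\sigma h}^{n+1}u^{n+1})=(D_{\tau}e_{\sigma h}^{n+1})\,u^{n+1}+e_{\sigma h}^n\,D_{\tau}u^{n+1},
\end{equation*}
and then the projection splittings $D_{\tau}e_{\sigma h}^{n+1}=D_{\tau}\eta_{\sigma h}^{n+1}+D_{\tau}\theta_{\sigma h}^{n+1}$ and $e_{\sigma h}^n=\eta_{\sigma h}^n+\theta_{\sigma h}^n$, so that $Y_2^{n+1}$ decomposes into four subterms. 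The two $\eta$--pieces $\sigma_h^{n+1}(D_{\tau}\eta_{\sigma h}^{n+1})u^{n+1}$ and $\sigma_h^{n+1}\eta_{\sigma h}^n D_{\tau}u^{n+1}$ are \emph{small}: using the $L^2$--projection estimate \eqref{3.17} and the regularity of $\sigma,\sigma_t$ from Assumption~\ref{as.2.2}, a straightforward Hölder/Young step converts them into the $h^6$, $h^4$ contributions scaled by $\|u_h^n\|_{L^{\infty}}^2$. I will also rewrite $u^{n+1}=u_h^n+(u^{n+1}-u^n)+\eta_{uh}^n+\theta_{uh}^n$ before Hölder; this is precisely what introduces the coefficients $\|u_h^n\|_{L^{\infty}}$, $\|\nabla u_h^n\|_{L^3}$, the $\tau^3$ term coming from $\|u^{n+1}-u^n\|_{L^2}^2\lesssim\tau^2\|u_t\|^2$, and the $\|\theta_{uh}^n\|_{L^2}^2$ contributions appearing in the stated bound.

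The delicate piece is $\sigma_h^{n+1}(D_{\tau}\theta_{\sigma h}^{n+1})u^{n+1}$, since $D_{\tau}\theta_{\sigma h}^{n+1}$ is not a priori small. I plan to retain it explicitly and pair with $\tilde{\theta}_{uh}^{n+1}$ by a triple--Hölder factorization, alternating between $L^{\infty}$--$L^3$--$L^6$ and $L^{\infty}$--$L^2$--$L^{\infty}$ according to which factor carries the available regularity; the 2D embedding $H_0^1\hookrightarrow L^6$ together with Poincaré converts the appearing $\|\tilde{\theta}_{uh}^{n+1}\|_{L^6}$ into $\|\nabla\tilde{\theta}_{uh}^{n+1}\|_{L^2}$, which is then partly absorbed by the viscous term. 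Whenever a derivative on $\theta_{\sigma h}^{n+1}$ would be needed, I shift it onto $u_h^n$ or $\sigma_h^{n+1}$ by elementwise integration by parts; this shifts account for the $\|\nabla e_{\sigma h}^{n+1}\|_{L^2}^2$, $\|\nabla u_h^n\|_{L^3}^2$ and $\|\sigma_h^{n+1}\|_{W^{1,3}}^2$ factors on the right--hand side. Young's inequality is then applied with sufficiently small parameters so that, summed over all nine $Y_i$ pieces of \eqref{3.22}, the total gradient contribution does not exceed $\mu\tau\|\nabla\tilde{\theta}_{uh}^{n+1}\|_{L^2}^2$; the split $\mu\tau/9$ reflects this allocation. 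The quadratic factor $\|\sigma_h^{n+1}\tilde{\theta}_{uh}^{n+1}\|_{L^2}^2$ is kept unabsorbed to feed the subsequent Gronwall step in the global error analysis.

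The main obstacle is precisely the piece just discussed: since $D_{\tau}\theta_{\sigma h}^{n+1}$ cannot be bounded here in terms of data alone, the inequality must retain both $\|D_{\tau}\theta_{\sigma h}^{n+1}\|_{L^2}$ and $\|D_{\tau}\theta_{\sigma h}^{n+1}\|_{L^3}$ (the two natural norms that arise from the $L^{\infty}$--$L^3$--$L^6$ versus $L^{\infty}$--$L^2$--$L^{\infty}$ Hölder choices). Coordinating these choices so that every surviving power of $h$ is non--negative, while keeping the coefficients of $\|\nabla\tilde{\theta}_{uh}^{n+1}\|_{L^2}^2$ strictly below $\mu\tau/9$, is the delicate bookkeeping step; the control of $\|D_{\tau}\theta_{\sigma h}^{n+1}\|_{L^p}$ itself will have to be recovered later, by differencing the $\sigma$--error equation \eqref{3.21} in time and combining it with Lemma~\ref{lem.3.3}.
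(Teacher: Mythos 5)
Your decomposition of $Y_2^{n+1}$ via the discrete Leibniz rule and the treatment of the two $\eta$--pieces are fine and match the paper's opening moves (the paper uses the symmetric split $e_{\sigma h}^{n+1}D_\tau u^{n+1}+u^nD_\tau e_{\sigma h}^{n+1}$, but that is immaterial). The genuine gap is in your handling of the delicate piece $2\tau(\sigma_h^{n+1}u_h^nD_\tau\theta_{\sigma h}^{n+1},\tilde{\theta}_{uh}^{n+1})$. You propose to ``retain it explicitly'' via H\"older/Young and recover control of $\|D_\tau\theta_{\sigma h}^{n+1}\|_{L^p}$ later. But any direct H\"older pairing here puts $\|D_\tau\theta_{\sigma h}^{n+1}\|_{L^2}$ (or $\|\cdot\|_{L^3}$) against $\|\tilde{\theta}_{uh}^{n+1}\|_{L^2}$ or $\|\tilde{\theta}_{uh}^{n+1}\|_{L^6}$, and neither of those factors is small --- $\tilde{\theta}_{uh}^{n+1}$ is the unknown being estimated. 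After Young you are left with a term of the form $C\tau\|D_\tau\theta_{\sigma h}^{n+1}\|_{L^2}^2$ with an $O(1)$ coefficient. The best a posteriori bound available (obtained, as in the paper's step (I-4), by testing \eqref{3.21} with $D_\tau\theta_{\sigma h}^{n+1}$) is $\|D_\tau\theta_{\sigma h}^{n+1}\|_{L^2}^2\leq Ch^2$, so summing over $n$ yields $O(h^2)$, which destroys the $O(\tau^2+h^4)$ rate. This is precisely why the stated inequality \eqref{3.47} contains $\|D_\tau\theta_{\sigma h}^{n+1}\|_{L^2}^2$ only with the prefactor $h^2$ and $\|D_\tau\theta_{\sigma h}^{n+1}\|_{L^3}^2$ only multiplied by the small quantity $h^6+\|\theta_{uh}^n\|_{L^2}^2$; your route cannot produce that structure. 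Also, your ``elementwise integration by parts to shift derivatives off $\theta_{\sigma h}^{n+1}$'' does not help here: the problematic operator is the time difference $D_\tau$, not a spatial gradient.

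The missing idea is the paper's substitution trick. One splits $u^{n}=e_{uh}^n+u_h^n$ (the $e_{uh}^n$ part is harmless since it pairs $\|D_\tau\theta_{\sigma h}^{n+1}\|_{L^3}$ with the small factor $\|\eta_{uh}^n\|_{L^2}+\|\theta_{uh}^n\|_{L^2}$), introduces the $L^2$ projection $S_h$ onto piecewise constants, and writes the dangerous part as a commutator term $\bigl(D_\tau\theta_{\sigma h}^{n+1},\sigma_h^{n+1}[(u_h^n\cdot\tilde{\theta}_{uh}^{n+1})-S_h(u_h^n\cdot\tilde{\theta}_{uh}^{n+1})]\bigr)$, which gains a factor $h$ from \eqref{Sh} and explains the $h^2\|D_\tau\theta_{\sigma h}^{n+1}\|_{L^2}^2$ term, plus the term $\bigl(D_\tau\theta_{\sigma h}^{n+1},\sigma_h^{n+1}S_h(u_h^n\cdot\tilde{\theta}_{uh}^{n+1})\bigr)$, which is \emph{not} estimated at all but replaced by taking $r_h=2\tau\sigma_h^{n+1}S_h(u_h^n\cdot\tilde{\theta}_{uh}^{n+1})$ in the density error equation \eqref{3.21}. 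This converts $D_\tau\theta_{\sigma h}^{n+1}$ into the convective error terms $Z_1,\dots,Z_4$ and the truncation error $R_{\sigma1}^{n+1}$, which is where the terms $C\tau\|\sigma_h^{n+1}\tilde{\theta}_{uh}^{n+1}\|_{L^2}^2$, $C\tau^3\|u_h^n\|_{L^\infty}^2$, $\|\nabla e_{\sigma h}^{n+1}\|_{L^2}^2$ and the $W^{1,3}$ factors in \eqref{3.47} actually come from. Without this step your argument does not close.
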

\begin{proof}
Obviously, $2\tau|(Y_{2}^{n+1}, \tilde{\theta}_{uh}^{n+1})|$ can be disassembled into three terms
\begin{equation}
\begin{aligned}
	&2\tau|(Y_{2}^{n+1}, \tilde{\theta}_{uh}^{n+1})|\\
	&=2\tau|(\sigma_h^{n+1}D_{\tau}(e_{\sigma h}^{n+1}u^{n+1}),\tilde{\theta}_{uh}^{n+1})|\\
	&\leq 2\tau|(\sigma_h^{n+1}e_{\sigma h}^{n+1}D_{\tau}u^{n+1}, \tilde{\theta}_{uh}^{n+1})|\!+\!2\tau|(\sigma_h^{n+1}u^nD_{\tau}e_{\sigma h}^{n+1},\tilde{\theta}_{uh}^{n+1})|\\
	&\leq 2\tau|(\sigma_h^{n+1}e_{\sigma h}^{n+1}D_{\tau}u^{n+1}, \tilde{\theta}_{uh}^{n+1})|\!+\!2\tau|(\sigma_h^{n+1}u^nD_{\tau}\eta_{\sigma h}^{n+1}, \tilde{\theta}_{uh}^{n+1})|\\
	&+2\tau|(\sigma_h^{n+1}u^nD_{\tau}\theta_{\sigma h}^{n+1},\tilde{\theta}_{uh}^{n+1})|. \label{after3.34}
\end{aligned}
\end{equation}

For the first term in \eqref{after3.34}, we have
\begin{equation}
\begin{aligned}
	&2\tau|(\sigma_h^{n+1}e_{\sigma h}^{n+1}D_{\tau}u^{n+1}, \tilde{\theta}_{uh}^{n+1})|\\
	&\leq C\tau||\sigma_h^{n+1}||_{L^{\infty}}||e_{\sigma h}^{n+1}||_{L^2}||D_{\tau}u^{n+1}||_{L^3}||\tilde{\theta}_{uh}^{n+1}||_{L^6}\\
	&\leq \frac{\mu\tau}{81}||\nabla \tilde{\theta}_{uh}^{n+1}||_{L^2}^2+C\tau||\sigma_h^{n+1}||_{L^{\infty}}^2||e_{\sigma h}^{n+1}||_{L^2}^2,
	\label{4.22}
\end{aligned}
\end{equation}
where we have used
\begin{equation}
||D_{\tau}u^{n+1}||_{L^3}\leq ||u_t+O(\tau)||_{L^3}\leq C.\nonumber
\end{equation}

Additionally, thanks to the Poincare inequality, the second term in \eqref{after3.34} can be estimated as follows:
\begin{equation}
\begin{aligned}
	&2\tau|(\sigma_h^{n+1}u^nD_{\tau}\eta_{\sigma h}^{n+1}, \tilde{\theta}_{uh}^{n+1})|\\
	&\leq C\tau ||\sigma_h^{n+1}||_{L^{\infty}}||u^n||_{L^{\infty}}||D_{\tau}\eta_{\sigma h}^{n+1}||_{L^2}||\nabla \tilde{\theta}_{uh}^{n+1}||_{L^2}\\
	&\leq \frac{\mu\tau}{81}||\nabla \tilde{\theta}_{uh}^{n+1}||_{L^2}^2+C\tau h^4||\sigma_h^{n+1}||_{L^{\infty}}^2, \label{4.23}
\end{aligned}
\end{equation}
where the following inequality \cite{bib17} is used in the last step
\begin{equation}
||D_{\tau}\eta_{\sigma h}^{n+1}||_{L^2}\leq Ch^2||D_{\tau}\sigma^{n+1}||_{H^2}\leq Ch^2||\sigma_t+O(\tau)||_{H^2}\leq Ch^2.\nonumber
\end{equation}

Finally, by employing \eqref{3.16}, the last term in \eqref{after3.34} follows by
\begin{equation}
\begin{aligned}
	&2\tau|(\sigma_h^{n+1}u^nD_{\tau}\theta_{\sigma h}^{n+1},\tilde{\theta}_{uh}^{n+1})|\\
	&\leq 2\tau|(\sigma_h^{n+1}e_{uh}^nD_{\tau}\theta_{\sigma h}^{n+1},\tilde{\theta}_{uh}^{n+1})|+2\tau|(\sigma_h^{n+1}u_h^nD_{\tau}\theta_{\sigma h}^{n+1},\tilde{\theta}_{uh}^{n+1})|\\
	&\leq C\tau||D_{\tau}\theta_{\sigma h}^{n+1}||_{L^3}(||\eta_{uh}^n||_{L^6}+||\theta_{uh}^n||_{L^6})||\sigma_h^{n+1}||_{L^\infty}||\tilde{\theta}_{uh}^{n+1}||_{L^2}\\
	&+2\tau|(\sigma_h^{n+1}u_h^nD_{\tau}\theta_{\sigma h}^{n+1},\tilde{\theta}_{uh}^{n+1})|\\
	&\leq \frac{\mu\tau}{81}||\nabla\tilde{\theta}_{uh}^{n+1}||_{L^2}^2+C\tau h^4||\sigma_h^{n+1}||_{L^\infty}||D_{\tau}\theta_{\sigma h}^{n+1}||_{L^3}^2\\
	&+C\tau||\sigma_h^{n+1}||_{L^\infty}||D_{\tau}\theta_{\sigma h}^{n+1}||_{L^3}^2||\nabla \theta_{uh}^n||_{L^2}^2+2\tau|(\sigma_h^{n+1}u_h^nD_{\tau}\theta_{\sigma h}^{n+1},\tilde{\theta}_{uh}^{n+1})|.\label{4.24}
\end{aligned}
\end{equation}
To estimate the term $2\tau|(\sigma_h^{n+1}u_h^nD_{\tau}\theta_{\sigma h}^{n+1},\tilde{\theta}_{uh}^{n+1})|$ in \eqref{4.24}, we introduce the piecewise constant finite element space \cite{bib17}
\begin{equation}
W_h^0=\{q_h\in L^2(\Omega)|q_h\in P_0(K), \forall K\in \mathcal{T}_h\}.\nonumber
\end{equation}
Let $S_h$  denote the $L^2$ projection operator from $L^2(\Omega)$ onto $W_h^0$ \cite{bib17}, then
\begin{equation}
||q-S_hq||_{L^2}\leq Ch||q||_{H^1}~~ \mathrm{and} ~~||S_hq||_{L^2}\leq ||q||_{L^2},\label{Sh}
\end{equation}
which follows that
\begin{equation}
\begin{aligned}
	&||(u_h^n\cdot\tilde{\theta}_{uh}^{n+1})-S_h(u_h^n\cdot\tilde{\theta}_{uh}^{n+1})||_{L^2}\\
	&\leq Ch||u_h^n\cdot\tilde{\theta}_{uh}^{n+1}||_{H^1}\\
	&\leq Ch(||\nabla u_h^n||_{L^3}||\nabla\tilde{\theta}_{uh}^{n+1}||_{L^2}+||u_h^n||_{L^{\infty}}||\nabla\tilde{\theta}_{uh}^{n+1}||_{L^2}).\label{4.26}
\end{aligned}
\end{equation}
Thus, using \eqref{4.26} and Young inequality, we have
\begin{equation}
\begin{aligned}
	&2\tau|(\sigma_h^{n+1}u_h^nD_{\tau}\theta_{\sigma h}^{n+1},\tilde{\theta}_{uh}^{n+1})|\\
	&=2\tau|(D_{\tau}\theta_{\sigma h}^{n+1},\sigma_h^{n+1}((u_h^n\cdot\tilde{\theta}_{uh}^{n+1})-S_h(u_h^n\cdot\tilde{\theta}_{uh}^{n+1})))|\\
	&+2\tau|(D_{\tau}\theta_{\sigma h}^{n+1},\sigma_h^{n+1}S_h(u_h^n\cdot\tilde{\theta}_{uh}^{n+1}))|\\
	&\leq C\tau h||D_{\tau}\theta_{\sigma h}^{n+1}||_{L^2}||\sigma_h^{n+1}||_{L^{\infty}}(||\nabla u_h^n||_{L^3}||\nabla\tilde{\theta}_{uh}^{n+1}||_{L^2}\!\!+\!||u_h^n||_{L^{\infty}}||\nabla\tilde{\theta}_{uh}^{n+1}||_{L^2})\\
	&+2\tau|(D_{\tau}\theta_{\sigma h}^{n+1},\sigma_h^{n+1}S_h(u_h^n\cdot\tilde{\theta}_{uh}^{n+1}))|\\
	&\leq \frac{\mu\tau}{81}||\nabla \tilde{\theta}_{uh}^{n+1}||_{L^2}^2+C\tau h^2||D_{\tau}\theta_{\sigma h}^{n+1}||_{L^2}^2||\sigma_h^{n+1}||_{L^{\infty}}^2(||\nabla u_h^n||_{L^3}^2+||u_h^n||_{L^{\infty}}^2)\\
	&+2\tau|(D_{\tau}\theta_{\sigma h}^{n+1},\sigma_h^{n+1}S_h(u_h^n\cdot\tilde{\theta}_{uh}^{n+1}))|.\label{4.27}
\end{aligned}
\end{equation}
Subsequently, 
taking $r_h=2\tau\sigma_h^{n+1}S_h(u_h^n\cdot\tilde{\theta}_{uh}^{n+1})\in W_h$ in \eqref{3.21} and applying \eqref{3.13}, we arrive at
\begin{equation}
\begin{aligned}
	2\tau|(D_{\tau}\theta_{\sigma h}^{n+1}, \sigma_h^{n+1}S_h(u_h^n\cdot\tilde{\theta}_{uh}^{n+1}))|&\leq2\tau\sum\limits_{i=1}^4|(Z_{i}^{n+1}, \sigma_h^{n+1}S_h(u_h^n\cdot\tilde{\theta}_{uh}^{n+1}))|\\
	&+2\tau|(R_{\sigma }^{n+1}, \sigma_h^{n+1}S_h(u_h^n\cdot\tilde{\theta}_{uh}^{n+1}))|,
\end{aligned}
\label{3.41}	
\end{equation}
where
\begin{align*}
Z_{1}^{n+1}&=\nabla\sigma^{n+1}e_{uh}^n, \\
Z_{2}^{n+1}&=\nabla e_{\sigma h}^{n+1}u_h^n, \\
Z_{3}^{n+1}&=\frac{1}{2}\sigma^{n+1}\nabla\cdot e_{uh}^n,\\
Z_{4}^{n+1}&=\frac{1}{2}\nabla\cdot u_h^ne_{\sigma h}^{n+1}.
\end{align*}
Utilizing  \eqref{3.16} and \eqref{Sh}, we can derive
\begin{equation}
\begin{aligned}
	&2\tau|(Z_{1}^{n+1},\sigma_h^{n+1}S_h(u_h^n\cdot\tilde{\theta}_{uh}^{n+1}))|\\
	&\leq C\tau||\nabla\sigma^{n+1}||_{L^{\infty}}||e_{uh}^n||_{L^2}||u_h^{n}||_{L^{\infty}}||\sigma_h^{n+1}||_{L^\infty}||\tilde{\theta}_{uh}^{n+1}||_{L^2}\\
	&\leq C\tau (||\eta_{uh}^n||_{L^2}+||\theta_{uh}^n||_{L^2})||u_h^n||_{L^{\infty}}||\sigma_h^{n+1}||_{L^\infty}||\nabla\tilde{\theta}_{uh}^{n+1}||_{L^2}\\
	&\leq \frac{\mu\tau}{81}||\nabla \tilde{\theta}_{uh}^{n+1}||_{L^2}^2+C\tau h^6||u_h^n||_{L^{\infty}}^2||\sigma_h^{n+1}||_{L^\infty}^2\\
	&+C\tau||\nabla\theta_{uh}^n||_{L^2}^2||u_h^n||_{L^{\infty}}^2||\sigma_h^{n+1}||_{L^\infty}^2.\label{3.42}
\end{aligned}
\end{equation}
Thanks to \eqref{4.26} and the integration by parts, we get
\begin{equation}
\begin{aligned}
	&2\tau|(Z_{2}^{n+1},\sigma_h^{n+1}S_h(u_h^n\cdot\tilde{\theta}_{uh}^{n+1}))|\\
	&\leq 2\tau|(\nabla e_{\sigma h}^{n+1}u_h^n,\sigma_h^{n+1}(S_h(u_h^n\cdot\tilde{\theta}_{uh}^{n+1})-(u_h^n\cdot\tilde{\theta}_{uh}^{n+1})))|\\
	&+2\tau|(\nabla e_{\sigma h}^{n+1}u_h^n,\sigma_h^{n+1}u_h^n\cdot\tilde{\theta}_{uh}^{n+1})|\\
	&\leq C\tau h||\nabla e_{\sigma h}^{n+1}||_{L^2}||u_h^n||_{L^{\infty}}||\sigma_h^{n+1}||_{L^{\infty}}||\nabla\tilde{\theta}_{uh}^{n+1}||_{L^2}(||\nabla u_h^n||_{L^3}\!\!+\!||u_h^n||_{L^{\infty}})\\
	&+C\tau||e_{\sigma h}^{n+1}||_{L^2}||\nabla u_h^n||_{L^3}||\sigma_h^{n+1}||_{L^{\infty}}||u_h^n||_{L^{\infty}}||\tilde{\theta}_{uh}^{n+1}||_{L^6}\\
	&+C\tau||e_{\sigma h}^{n+1}||_{L^2}||u_h^n||_{L^\infty}||\nabla\sigma_h^{n+1}||_{L^3}||u_h^n||_{L^{\infty}}||\tilde{\theta}_{uh}^{n+1}||_{L^6}\\
	&+C\tau||e_{\sigma h}^{n+1}||_{L^2}||u_h^n||_{L^\infty}||\sigma_h^{n+1}||_{L^\infty}||\nabla u_h^n||_{L^3}||\tilde{\theta}_{uh}^{n+1}||_{L^6}\\
	&+C\tau||e_{\sigma h}^{n+1}||_{L^2}||u_h^n||_{L^\infty}||\sigma_h^{n+1}||_{L^\infty}||u_h^n||_{L^{\infty}}||\nabla\tilde{\theta}_{uh}^{n+1}||_{L^2}\\
	&\leq \frac{\mu\tau}{81}||\nabla\tilde{\theta}_{uh}^{n+1}||_{L^2}^2+C\tau||e_{\sigma h}^{n+1}||_{L^2}^2||u_h^n||_{W^{1,3}}^2||\sigma_h^{n+1}||_{L^{\infty}}^2||u_h^n||_{L^{\infty}}^2\\
	&+C\tau||e_{\sigma h}^{n+1}||_{L^2}^2(||u_h^n||_{L^{\infty}}^4||\sigma_h^{n+1}||_{W^{1,3}}^2+||u_h^n||_{L^{\infty}}^4||\sigma_h^{n+1}||_{L^{\infty}}^2)\\
	&+C\tau h^2||\nabla e_{\sigma h}^{n+1}||_{L^2}^2||u_h^n||_{L^{\infty}}^2||\sigma_h^{n+1}||_{L^{\infty}}^2(||\nabla u_h^n||_{L^3}^2\!+\!||u_h^n||_{L^{\infty}}^2).\label{3.43}
\end{aligned}
\end{equation}
Employing \eqref{3.16}, \eqref{Sh} and Young inequality, we have
\begin{equation}
\begin{aligned}
	&2\tau|(Z_{3}^{n+1},\sigma_h^{n+1}S_h(u_h^n\cdot\tilde{\theta}_{uh}^{n+1}))|\\
	&\leq
	C\tau||\sigma^{n+1}||_{L^{\infty}}(||\nabla\eta_{uh}^n||_{L^2}+||\nabla \theta_{uh}^n||_{L^2})||u_h^n||_{L^{\infty}}||\sigma_h^{n+1}||_{L^\infty}||\tilde{\theta}_{uh}^{n+1}||_{L^2}\\
	&\leq \frac{\mu\tau}{81}||\nabla\tilde{\theta}_{uh}^{n+1}||_{L^2}^2+C\tau h^4||u_h^n||_{L^{\infty}}^2||\sigma_h^{n+1}||_{L^\infty}^2\\
	&+C\tau||\nabla\theta_{uh}^n||_{L^2}^2||u_h^n||_{L^{\infty}}^2||\sigma_h^{n+1}||_{L^\infty}^2,\label{3.44}
\end{aligned}
\end{equation}
and
\begin{equation}
\begin{aligned}
	&2\tau|(Z_{4}^{n+1},\sigma_h^{n+1}S_h(u_h^n\cdot\tilde{\theta}_{uh}^{n+1}))|\\
	&\leq C\tau||\nabla u_h^n||_{L^{\infty}}||e_{\sigma h}^{n+1}||_{L^2}||u_h^n||_{L^{3}}||\sigma_h^{n+1}||_{L^\infty}||\tilde{\theta}_{uh}^{n+1}||_{L^6}\\
	&\leq \frac{\mu\tau}{81}||\nabla\tilde{\theta}_{uh}^{n+1}||_{L^2}^2+C\tau ||\nabla u_h^n||_{L^{3}}^2||e_{\sigma h}^{n+1}||_{L^2}^2||u_h^n||_{L^{\infty}}^2||\sigma_h^{n+1}||_{L^\infty}^2.\label{3.45}
\end{aligned}
\end{equation}
Furthermore,  utilizing \eqref{3.20} and \eqref{Sh}, we can obtain
\begin{equation}
\begin{aligned}
	&2\tau|(R_{\sigma }^{n+1}, \sigma_h^{n+1}S_h(u_h^n\cdot\tilde{\theta}_{uh}^{n+1}))|\\
	&\leq C\tau||R_{\sigma }^{n+1}||_{L^2}||u_h^n||_{L^{\infty}}||\sigma_h^{n+1}||_{L^\infty}||\tilde{\theta}_{uh}^{n+1}||_{L^2}\\
	&\leq \frac{\mu\tau}{81}||\nabla\tilde{\theta}_{uh}^{n+1}||_{L^2}^2+C\tau||R_{\sigma }^{n+1}||_{L^2}^2||u_h^n||_{L^{\infty}}^2||\sigma_h^{n+1}||_{L^\infty}^2\\
	&\leq \frac{\mu\tau}{81}||\nabla\tilde{\theta}_{uh}^{n+1}||_{L^2}^2+C\tau^3||u_h^n||_{L^{\infty}}^2||\sigma_h^{n+1}||_{L^\infty}^2. \label{3.46}
\end{aligned}
\end{equation}

Putting \eqref{4.27}-\eqref{3.46} into \eqref{4.24}, and combining with \eqref{4.22} and \eqref{4.23},  we arrive at \eqref{3.47}. The proof is completed.
\end{proof}
\begin{lemma}\label{lem.3.4} Under Assumptions \ref{as.2.2}, it is valid for the error equations \eqref{3.22}, for all $0\leq n\leq N-1$, that
\begin{equation}\label{3.37}
\begin{aligned}				&||\sigma_h^{n+1}\tilde{\theta}_{uh}^{n+1}||_{L^2}^2-||\sigma_h^{n}\tilde{\theta}_{uh}^{n}||_{L^2}^2+||\sigma_h^{n+1}\tilde{\theta}_{uh}^{n+1}-\sigma_h^{n}\tilde{\theta}_{uh}^{n}||_{L^2}^2+\mu\tau||\nabla \tilde{\theta}_{uh}^{n+1}||_{L^2}^2\\
	&\leq C\tau||e_{\sigma h}^{n+1}||_{L^2}^2+C\tau^3||u_h^n||_{L^{\infty}}^2\\
	&+C\tau h^4||u_h^n||_{L^{\infty}}^2||\sigma_h^{n+1}||_{L^\infty}^2+C\tau ||\sigma_h^{n+1}||_{L^\infty}^2||D_{\tau}\theta_{\sigma h}^{n+1}||_{L^3}^2(h^4+||\nabla\theta_{uh}^n||_{L^2}^2)\\
	&+C\tau h^2||D_{\tau}\theta_{\sigma h}^{n+1}||_{L^2}^2||\sigma_h^{n+1}||_{L^{\infty}}^2(||\nabla u_h^n||_{L^3}^2+||u_h^n||_{L^{\infty}}^2)\\
	&+C\tau h^2||\nabla e_{\sigma h}^{n+1}||_{L^2}^2||u_h^n||_{L^{\infty}}^2||\sigma_h^{n+1}||_{L^{\infty}}^2(||\nabla u_h^n||_{L^3}^2+||u_h^n||_{L^{\infty}}^2)\\
	&+C\tau||u_h^n||_{L^{\infty}}^2||\sigma_h^{n+1}||_{L^\infty}^2||\nabla\theta_{uh}^n||_{L^2}^2+C\tau||\nabla u_h^n||_{L^{3}}^2||e_{\sigma h}^{n+1}||_{L^2}^2||u_h^n||_{L^{\infty}}^2\\
	&+C\tau||e_{\sigma h}^{n+1}||_{L^2}^2(||u_h^n||_{W^{1,3}}^2||\sigma_h^{n+1}||_{L^{\infty}}^2||u_h^n||_{L^{\infty}}^2)\\
	&+C\tau||e_{\sigma h}^{n+1}||_{L^2}^2(||u_h^n||_{L^{\infty}}^4||\sigma_h^{n+1}||_{W^{1,3}}^2+||u_h^n||_{L^{\infty}}^4||\sigma_h^{n+1}||_{L^{\infty}}^2)\\
	&+C\tau||\sigma_h^{n+1}||_{L^{\infty}}^2||e_{\sigma h}^{n+1}||_{L^2}^2+C\tau h^4||\sigma_h^{n+1}||_{L^{\infty}}^2\\
	&+C\tau h^4||\bar{\rho}_h^{n+1}||_{L^{\infty}}^2+C\tau h^6||\sigma_h^{n+1}||_{L^{\infty}}^2||D_{\tau}\sigma_h^{n+1}||_{L^3}^2\\
	&+C\tau||e_{\rho h}^n||_{L^2}^2+C\tau h^6||\lambda_h^n\sigma_h^{n}||_{L^{\infty}}^2+C\tau||\lambda_h^n\sigma_h^{n}||_{L^{\infty}}^2||\sigma_h^n\theta_{uh}^n||_{L^2}^2\\
	&+C\tau h^4+C\tau||\lambda_h^n\sigma_h^{n}||_{L^{\infty}}^2||u_h^n||_{L^{\infty}}^2(||\sigma_h^{n+1}\tilde{\theta}_{uh}^{n+1}||_{L^2}^2\!\!+\!||\sigma_h^{n+1}\!\!-\!\sigma_h^n||_{L^2}^2||\tilde{\theta}_{uh}^{n+1}||_{L^\infty}^2)\\
	&+C\tau||e_{\rho h}^n||_{L^2}^2+C\tau h^6||\lambda_h^n\sigma_h^{n}||_{L^{\infty}}^2+C\tau||\lambda_h^n\sigma_h^{n}||_{L^{\infty}}^2||\sigma_h^{n}\theta_{uh}^n||_{L^2}^2\\
	&+C\tau h^4+C\tau||\lambda_h^n\sigma_h^{n}||_{L^\infty}^2||u_h^n||_{L^{\infty}}^2(h^6||\sigma_{h}^{n}||_{L^\infty}^2\\
	&+||\sigma_{h}^{n+1}\tilde{\theta}_{uh}^{n+1}||_{L^2}^2+||\sigma_{h}^{n+1}-\sigma_{h}^{n}||_{L^2}^2||\tilde{\theta}_{uh}^{n+1}||_{L^\infty}^2).
\end{aligned}
\end{equation}
\end{lemma}
\begin{proof}
Setting $(v_h,q_h)=2\tau(\tilde{\theta}_{uh}^{n+1}, \theta_{ph}^{n+1})$ into \eqref{3.22}, we obtain
\begin{equation}
\begin{aligned}
	&||\sigma_h^{n+1}\tilde{\theta}_{uh}^{n+1}||_{L^2}^2-||\sigma_h^{n}\tilde{\theta}_{uh}^{n}||_{L^2}^2+||\sigma_h^{n+1}\tilde{\theta}_{uh}^{n+1}-\sigma_h^{n}\tilde{\theta}_{uh}^{n}||_{L^2}^2\\
	&+2\mu\tau||\nabla\tilde{\theta}_{uh}^{n+1}||_{L^2}^2=2\tau(R_{u}^{n+1}, \tilde{\theta}_{uh}^{n+1})-2\tau\sum\limits_{i=1}^{9}(Y_{i}^{n+1},\tilde{\theta}_{uh}^{n+1}).\label{after3.32}
\end{aligned}
\end{equation}

Next, we analyze $2\tau\sum\limits_{i=1}^{9}(Y_{i}^{n+1},\tilde{\theta}_{uh}^{n+1}),~i=1,2,\dots,9$ one by one. Firstly, by applying the Young inequality and Poincare inequality, we can get
\begin{equation}
\begin{aligned}
	&2\tau|(Y_{1}^{n+1}, \tilde{\theta}_{uh}^{n+1})|\\
	&=2\tau|(e_{\sigma h}^{n+1}D_{\tau}(\sigma^{n+1}u^{n+1}), \tilde{\theta}_{uh}^{n+1})|\\
	&\leq C\tau||e_{\sigma h}^{n+1}||_{L^2}||D_{\tau}(\sigma^{n+1}u^{n+1})||_{L^{\infty}}||\tilde{\theta}_{uh}^{n+1}||_{L^2}\\
	&\leq C\tau||e_{\sigma h}^{n+1}||_{L^2}||\nabla \tilde{\theta}_{uh}^{n+1}||_{L^2}\\
	&\leq \frac{\mu\tau}{18}||\nabla \tilde{\theta}_{uh}^{n+1}||_{L^2}^2+C\tau||e_{\sigma h}^{n+1}||_{L^2}^2. \label{3.38}
\end{aligned}
\end{equation}

The second term $2\tau(Y_2^{n+1},\tilde{\theta}_{uh}^{n+1})$ is estimated in Lemma \ref{lemma4.3}.

For the third term, by using \eqref{3.16}, Poincare inequality and Young inequality, there holds
\begin{equation}
\begin{aligned}
	&2\tau|(Y_{3}^{n+1}, \tilde{\theta}_{uh}^{n+1})|\\
	&=2\tau|(\sigma_h^{n+1}D_{\tau}\eta_{uh}^{n+1}\sigma_h^{n+1},\tilde{\theta}_{uh}^{n+1})|+2\tau|(\sigma_h^{n+1}D_{\tau}\sigma_h^{n+1}\eta_{uh}^n,\tilde{\theta}_{uh}^{n+1})|\\
	&\leq C\tau ||\bar{\rho}_h^{n+1}||_{L^{\infty}}||D_{\tau}\eta_{uh}^{n+1}||_{L^2}||\tilde{\theta}_{uh}^{n+1}||_{L^2}\\
	&+C\tau||\sigma_h^{n+1}||_{L^\infty}||D_{\tau}\sigma_h^{n+1}||_{L^3}||\eta_{uh}^n||_{L^2}||\tilde{\theta}_{uh}^{n+1}||_{L^6}\\
	&\leq \frac{\mu\tau}{18}||\nabla \tilde{\theta}_{uh}^{n+1}||_{L^2}^2+C\tau h^4||\bar{\rho}_h^{n+1}||_{L^{\infty}}^2+C\tau h^6||\sigma_h^{n+1}||_{L^{\infty}}^2||D_{\tau}\sigma_h^{n+1}||_{L^3}^2,\label{3.48}
\end{aligned}
\end{equation}
where we have used
\begin{equation}
||D_{\tau}\eta_{uh}^{n+1}||_{L^2}\leq Ch^2||D_{\tau}u^{n+1}||_{H^2}\leq Ch^2||u_t+O(\tau)||_{H^2}\leq Ch^2.\nonumber
\end{equation}

Similarly, we can derive that
\begin{equation}
\begin{aligned}
	2\tau|(Y_{4}^{n+1}, \tilde{\theta}_{uh}^{n+1})|&=2\tau|(e_{\rho h}^{n}(u^n\cdot\nabla)u^{n+1}, \tilde{\theta}_{uh}^{n+1})|\\
	&\leq C\tau ||u^n||_{L^{\infty}}||\nabla u^{n+1}||_{L^3}||e_{\rho h}^{n}||_{L^2}||\nabla \tilde{\theta}_{uh}^{n+1}||_{L^2}\\
	&\leq \frac{\mu\tau}{18}||\nabla \tilde{\theta}_{uh}^{n+1}||_{L^2}^2+C\tau||e_{\rho h}^n||_{L^2}^2, \label{3.49}
\end{aligned}
\end{equation}
and by employing \eqref{3.16}, \eqref{3.5} and the Young inequality, we arrive at
\begin{equation}
\begin{aligned}
	&2\tau|(Y_{5}^{n+1}, \tilde{\theta}_{uh}^{n+1})|\\
	&=2\tau|(\rho_h^{n}(e_{uh}^n\cdot\nabla)u^{n+1}, \tilde{\theta}_{uh}^{n+1})|\\
	&\leq C\tau||\lambda_h^n\sigma_h^{n}||_{L^{\infty}}||\nabla u^{n+1}||_{L^3}(||\sigma_h^n\eta_{uh}^n||_{L^2}+||\sigma_h^n\theta_{uh}^n||_{L^2})|| \tilde{\theta}_{uh}^{n+1}||_{L^6}\\
	&\leq \frac{\mu\tau}{18}||\nabla \tilde{\theta}_{uh}^{n+1}||_{L^2}^2+C\tau h^6||\lambda_h^n\sigma_h^{n}||_{L^{\infty}}^2+C\tau||\lambda_h^n\sigma_h^{n}||_{L^{\infty}}^2||\sigma_h^n\theta_{uh}^n||_{L^2}^2. \label{3.50}
\end{aligned}
\end{equation}

By using the error splitting, \eqref{3.16}, the equality $\rho_h^n=\lambda_h^n(\sigma_h^n)^2$ and the integration by parts, we can deduce
\begin{equation}
\begin{aligned}
	&2\tau|(Y_{6}^{n+1}, \tilde{\theta}_{uh}^{n+1})|=2\tau|(\rho_h^{n}(u_h^n\cdot\nabla)\tilde{e}_{uh}^{n+1}, \tilde{\theta}_{uh}^{n+1})|\\
	&\leq C\tau||\lambda_h^n\sigma_h^{n}||_{L^{\infty}}||u_h^n||_{L^{\infty}}(||\nabla\eta_{uh}^{n+1}||_{L^2}+||\nabla\tilde{\theta}_{uh}^{n+1}||_{L^2})||\sigma_h^{n}\tilde{\theta}_{uh}^{n+1}||_{L^2}\\
	&\leq \frac{\mu\tau}{18}||\nabla \tilde{\theta}_{uh}^{n+ 1}|| _{L^2}^2 + C\tau h^4+C\tau||\lambda_h^n\sigma_h^{n}||_{L^{\infty}}^2||u_h^n||_{L^{\infty}}^2||\sigma_h^{n}\tilde{\theta}_{uh}^{n+1}||_{L^2}^2\\
	&\leq \frac{\mu\tau}{18}||\nabla \tilde{\theta}_{uh}^{n+ 1}|| _{L^2}^2 + C\tau h^4\\
	&+C\tau||\lambda_h^n\sigma_h^{n}||_{L^{\infty}}^2||u_h^n||_{L^{\infty}}^2(||\sigma_h^{n+1}\tilde{\theta}_{uh}^{n+1}||_{L^2}^2+||\sigma_h^{n+1}-\sigma_h^n||_{L^2}^2||\tilde{\theta}_{uh}^{n+1}||_{L^\infty}^2). \label{3.51}
\end{aligned}
\end{equation}

Similarly, noting the equality
\begin{align}\label{cue}
\nabla\cdot(u_h\otimes v_h)=(\nabla \cdot u_h)v_h+(u_h\cdot \nabla)v_h,
\end{align}
\eqref{3.5} and the Poincare inequality, there hold
\begin{equation}
\begin{aligned}
	&2\tau|(Y_{7}^{n+1}, \tilde{\theta}_{uh}^{n+1})|\\
	&=\tau|(u^{n+1}\nabla\cdot(e_{\rho h}^{n}u^n), \tilde{\theta}_{uh}^{n+1})|\\
	&=\tau|(\nabla\cdot(u^{n+1}\otimes(e_{\rho h}^{n}u^n))-(e_{\rho h}^{n}u^n\cdot\nabla)u^{n+1}, \tilde{\theta}_{uh}^{n+1})|\\
	&=\tau|-(u^{n+1}\otimes(e_{\rho h}^{n}u^n),\nabla \tilde{\theta}_{uh}^{n+1})-((e_{\rho h}^{n}u^n\cdot\nabla)u^{n+1}, \tilde{\theta}_{uh}^{n+1})|\\
	&\leq C\tau|| u^{n+1}||_{L^\infty}||e_{\rho h}^{n}||_{L^2}||u^n||_{L^\infty}||\nabla\tilde{\theta}_{uh}^{n+1}||_{L^2}\\
	&+C\tau||e_{\rho h}^{n}||_{L^2}||u^{n}||_{L^\infty}||\nabla u^n||_{L^\infty}||\tilde{\theta}_{uh}^{n+1}||_{L^2}\\
	&\leq \frac{\mu\tau}{18}||\nabla \tilde{\theta}_{uh}^{n+1}||_{L^2}^2+C\tau||e_{\rho h}^n||_{L^2}^2, \label{3.52}
\end{aligned}
\end{equation}
and
\begin{equation}
\begin{aligned}
	&2\tau|(Y_{8}^{n+1}, \tilde{\theta}_{uh}^{n+1})|\\
	&=\tau|(u^{n+1}\nabla\cdot(\rho_h^{n}e_{uh}^n), \tilde{\theta}_{uh}^{n+1})|\\
	&=\tau|-(u^{n+1}\otimes(\rho_h^{n}e_{uh}^n), \nabla\tilde{\theta}_{uh}^{n+1})-((\rho_h^{n}e_{uh}^n\cdot\nabla)u^{n+1},\tilde{\theta}_{uh}^{n+1})|\\
	&\leq C\tau|| u^{n+1}||_{L^\infty}||\lambda_h^n\sigma_h^{n}||_{L^\infty}||\sigma_h^{n}e_{uh}^n||_{L^2}||\nabla\tilde{\theta}_{uh}^{n+1}||_{L^2}\\
	&+C\tau||\lambda_h^n\sigma_h^{n}||_{L^\infty}||\sigma_h^{n}e_{uh}^n||_{L^2}||\nabla u^{n+1}||_{L^\infty}||\tilde{\theta}_{uh}^{n+1}||_{L^2}\\
	&\leq \frac{\mu\tau}{18}||\nabla\tilde{\theta}_{uh}^{n+1}||_{L^2}^2+C\tau h^6||\lambda_h^n\sigma_h^{n}||_{L^{\infty}}^2+C\tau||\lambda_h^n\sigma_h^{n}||_{L^{\infty}}^2||\sigma_h^{n}\theta_{uh}^n||_{L^2}^2. \label{3.53}
\end{aligned}
\end{equation}

Furthermore, by utilizing  \eqref{3.16}, we arrive at
\begin{equation}
\begin{aligned}
	&2\tau|(Y_{9}^{n+1}, \tilde{\theta}_{uh}^{n+1})|\\
	&=\tau|(\tilde{e}_{uh}^{n+1}\nabla\cdot(\rho_h^{n}u_h^n), \tilde{\theta}_{uh}^{n+1})|\\
	&=\tau|(\nabla\cdot(\tilde{e}_{uh}^{n+1}\otimes(\rho_h^{n}u_h^n))-((\rho_h^{n}u_h^n)\cdot\nabla)\tilde{e}_{uh}^{n+1}, \tilde{\theta}_{uh}^{n+1})|\\
	&=\tau|-(\tilde{e}_{uh}^{n+1}\otimes(\rho_h^{n}u_h^n),\nabla \tilde{\theta}_{uh}^{n+1})-((\rho_h^{n}u_h^n)\cdot\nabla)\tilde{e}_{uh}^{n+1}, \tilde{\theta}_{uh}^{n+1})|\\
	&\leq C\tau||\sigma_{h}^{n}\tilde{e}_{uh}^{n+1}||_{L^2}||\lambda_h^n\sigma_h^{n}||_{L^\infty}||u_h^n||_{L^{\infty}}||\nabla \tilde{\theta}_{uh}^{n+1}||_{L^2}\\
	&+C\tau||\lambda_h^n\sigma_h^{n}||_{L^{\infty}}||u_h^n||_{L^\infty}||\nabla\tilde{e}_{uh}^{n+1}||_{L^2}||\sigma_{h}^{n}\tilde{\theta}_{uh}^{n+1}||_{L^2}\\
	&\leq C\tau||\sigma_{h}^{n}\tilde{e}_{uh}^{n+1}||_{L^2}||\lambda_h^n\sigma_h^{n}||_{L^\infty}||u_h^n||_{L^{\infty}}||\nabla \tilde{\theta}_{uh}^{n+1}||_{L^2}\\
	&+C\tau||\lambda_h^n\sigma_h^{n}||_{L^{\infty}}||u_h^n||_{L^\infty}(||\nabla\eta_{uh}^{n+1}||_{L^2}+||\nabla\tilde{\theta}_{uh}^{n+1}||_{L^2})||\sigma_{h}^{n}\tilde{\theta}_{uh}^{n+1}||_{L^2}\\
	&\leq \frac{\mu\tau}{18}||\nabla\tilde{\theta}_{uh}^{n+1}||_{L^2}^2+C\tau h^4+C\tau||\lambda_h^n\sigma_h^{n}||_{L^\infty}^2||u_h^n||_{L^{\infty}}^2||\sigma_{h}^{n}\tilde{e}_{uh}^{n+1}||_{L^2}^2\\
	&\leq \frac{\mu\tau}{18}||\nabla\tilde{\theta}_{uh}^{n+1}||_{L^2}^2+C\tau h^4+C\tau||\lambda_h^n\sigma_h^{n}||_{L^\infty}^2||u_h^n||_{L^{\infty}}^2||\sigma_{h}^{n}(\eta_{uh}^{n+1}+\tilde{\theta}_{uh}^{n+1})||_{L^2}^2\\
	&\leq \frac{\mu\tau}{18}||\nabla\tilde{\theta}_{uh}^{n+1}||_{L^2}^2+C\tau h^4+C\tau||\lambda_h^n\sigma_h^{n}||_{L^\infty}^2||u_h^n||_{L^{\infty}}^2(h^6||\sigma_{h}^{n}||_{L^\infty}^2\\
	&+||\sigma_{h}^{n+1}\tilde{\theta}_{uh}^{n+1}||_{L^2}^2+||\sigma_{h}^{n+1}-\sigma_{h}^{n}||_{L^2}^2||\tilde{\theta}_{uh}^{n+1}||_{L^\infty}^2).\label{3.54}
\end{aligned}
\end{equation}

Finally, by employing \eqref{3.20}, we obtain that
\begin{equation}
2\tau(R_{u}^{n+1}, \tilde{\theta}_{uh}^{n+1})=C\tau||R_{u}^{n+1}||_{L^2}^2+C\tau||\tilde{\theta}_{uh}^{n+1}||_{L^2}^2\leq C\tau^3+C\tau||\tilde{\theta}_{uh}^{n+1}||_{L^2}^2.\label{Ru1}
\end{equation}

Thus, substituting \eqref{3.47} and \eqref{3.38}-\eqref{Ru1} into \eqref{after3.32}, we can have \eqref{3.37}. The proof is completed.
\end{proof}
\begin{lemma}\label{lem.3.5} Under Assumption \ref{as.2.2}, it is valid, for any $0\leq n\leq N-1$, that
\begin{align}
|1-\lambda_h^{n+1}|&\leq C(||e_{{\rho} h}^n||_{L^2}+|1-\lambda_h^{n+1}|||\bar{e}_{\rho h}^{n+1}||_{L^2}+||\bar{e}_{\rho h}^{n+1}||_{L^2}), \label{3.31}\\
||e_{{\rho} h}^{n+1}||_{L^2}&\leq C(|1-\lambda_h^{n+1}|+|1-\lambda_h^{n+1}|||\bar{e}_{\rho h}^{n+1}||_{L^2}+||\bar{e}_{\rho h}^{n+1}||_{L^2}),\label{3.32}\\
|1-\gamma_h^{n+1}|&\leq C\tau^2+ C||\sigma_h^{n+1}\tilde{e}_{uh}^{n+1}||_{L^2}^2+C||e_{\sigma h}^{n+1}||_{L^2}^2\notag\\
&+C||e_{\sigma h}^n||_{L^2}^2+C||\sigma_h^n\tilde{e}_{uh}^n||_{L^2}^2\notag\\
&+C||\sigma_h^n||_{L^\infty}(||u_h^n||_{L^{\infty}}+||\tilde{u}_h^n||_{L^{\infty}})(||\sigma_h^ne_{uh}^n||_{L^2}+||\sigma_h^n\tilde{e}_{uh}^n||_{L^2}).\label{3.33}
\end{align}
\end{lemma}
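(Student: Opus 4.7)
The plan is to prove the three estimates in order by exploiting the algebraic definitions of $\lambda_h^{n+1}$ in \eqref{3.6} and $\gamma_h^{n+1}$ in \eqref{3.3}, combined with mass conservation, Taylor expansion, and H\"older's inequality. For \eqref{3.31}, I would first observe that $\int_\Omega\rho_h^0\diff\mathbf{x}=\int_\Omega\rho^0\diff\mathbf{x}$ since $\Pi_h$ is the $L^2$ projection, and combine this with the discrete mass conservation (proved in the theorem of Section \ref{sec.3}) and the continuous mass conservation to conclude $\int_\Omega\rho_h^n\diff\mathbf{x}=\int_\Omega\rho^{n+1}\diff\mathbf{x}$. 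Substituting into \eqref{3.6} yields
\begin{equation*}
1-\lambda_h^{n+1}=\frac{\int_\Omega e_{\rho h}^n\diff\mathbf{x}-\int_\Omega\bar{e}_{\rho h}^{n+1}\diff\mathbf{x}}{\int_\Omega\rho^{n+1}\diff\mathbf{x}-\int_\Omega\bar{e}_{\rho h}^{n+1}\diff\mathbf{x}}.
\end{equation*}
Clearing the denominator, applying Cauchy--Schwarz to bound the integrals by $L^2$ norms, and isolating $|1-\lambda_h^{n+1}|$ (using $\int_\Omega\rho^{n+1}\diff\mathbf{x}\geq\rho_0^{\min}|\Omega|>0$ from \eqref{1.4}) delivers \eqref{3.31}, with the $|1-\lambda_h^{n+1}|\,||\bar{e}_{\rho h}^{n+1}||_{L^2}$ term arising precisely from the $\int_\Omega\bar{e}_{\rho h}^{n+1}\diff\mathbf{x}$ piece of the denominator.

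For \eqref{3.32}, I would simply decompose
\begin{equation*}
e_{\rho h}^{n+1}=\rho^{n+1}-\lambda_h^{n+1}\bar{\rho}_h^{n+1}=(1-\lambda_h^{n+1})\rho^{n+1}+\lambda_h^{n+1}\bar{e}_{\rho h}^{n+1},
\end{equation*}
and then apply the triangle inequality in $L^2$, using $|\lambda_h^{n+1}|\leq 1+|1-\lambda_h^{n+1}|$ together with the regularity bound $||\rho^{n+1}||_{L^2}\leq C$ from Assumption \ref{as.2.2}.

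The estimate \eqref{3.33} is the main obstacle. Setting $A=\sigma_h^{n+1}\tilde{u}_h^{n+1}$, $B=\sigma_h^n\tilde{u}_h^n$, $C=\sigma_h^n u_h^n$, the definition \eqref{3.3} gives $(\gamma_h^{n+1}-1)||A||_{L^2}^2=||A-B||_{L^2}^2+(C-B,C+B)$, where I used the identity $||C||_{L^2}^2-||B||_{L^2}^2=(C-B,C+B)$. For $||A-B||_{L^2}^2$, I would insert and subtract the smooth increment $\sigma^{n+1}u^{n+1}-\sigma^n u^n$, which is of order $\tau$ by Taylor expansion and Assumption \ref{as.2.2}, and then split the resulting error factors via $\sigma u-\sigma_h\tilde{u}_h=e_{\sigma h}u+\sigma_h\tilde{e}_{uh}$ at time levels $n$ and $n+1$; this produces the $C\tau^2$ contribution together with the four squared-error terms $||e_{\sigma h}^{n+1}||_{L^2}^2$, $||\sigma_h^{n+1}||_{L^\infty}^2||\tilde{e}_{uh}^{n+1}||_{L^2}^2$, $||e_{\sigma h}^n||_{L^2}^2$, $||\sigma_h^n||_{L^\infty}^2||\tilde{e}_{uh}^n||_{L^2}^2$. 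For the cross term, I would use $C-B=\sigma_h^n(\tilde{e}_{uh}^n-e_{uh}^n)$ and $C+B=\sigma_h^n(u_h^n+\tilde{u}_h^n)$ and estimate by H\"older's inequality placing $L^\infty$ norms on $\sigma_h^n$ and $u_h^n+\tilde{u}_h^n$ and an $L^2$ norm on the difference, which yields precisely the last line of \eqref{3.33}. The subtle point is the division by $||A||_{L^2}^2$: when $||A||_{L^2}=0$ the inequality holds trivially since $\gamma_h^{n+1}=1$ by \eqref{3.3}, whereas in the generic case the constant $C$ must absorb a uniform lower bound on $||A||_{L^2}^2$, to be secured from the induction hypothesis carried through Section \ref{sec.4} and the regularity of the exact solution.
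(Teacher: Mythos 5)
Your treatment of \eqref{3.33} is essentially the paper's own argument: dismiss the case $\|\sigma_h^{n+1}\tilde u_h^{n+1}\|_{L^2}=0$, assume a lower bound $\epsilon_0$ on $\|\sigma_h^{n+1}\tilde u_h^{n+1}\|_{L^2}^2$ otherwise, expand $\|A-B\|_{L^2}^2$ by inserting the chain $\sigma_h^{n+1}u^{n+1},\ \sigma^{n+1}u^{n+1},\ \sigma^n u^n,\ \sigma_h^n u^n$ (Taylor for the middle increment, error splitting for the rest), and handle $\|C\|_{L^2}^2-\|B\|_{L^2}^2$ as a cross term with $C-B=\sigma_h^n(\tilde e_{uh}^n-e_{uh}^n)$ estimated by H\"older; the resulting right-hand side matches term by term. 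The one place you differ substantively is \eqref{3.31}--\eqref{3.32}: the paper does not prove these at all but cites an external reference, whereas you supply a self-contained and correct derivation (the identity $1-\lambda_h^{n+1}=(\int_\Omega e_{\rho h}^n-\int_\Omega\bar e_{\rho h}^{n+1})/(\int_\Omega\rho^{n+1}-\int_\Omega\bar e_{\rho h}^{n+1})$ via mass conservation, a positive lower bound on $\int_\Omega\rho^{n+1}$ from \eqref{1.4}, and the decomposition $e_{\rho h}^{n+1}=(1-\lambda_h^{n+1})\rho^{n+1}+\lambda_h^{n+1}\bar e_{\rho h}^{n+1}$), which is a genuine improvement in completeness. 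Your closing caveat about the uniformity of the lower bound $\epsilon_0$ is fair; the paper simply asserts its existence without further justification, so you are no worse off than the original on that point.
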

\begin{proof}
The proof of \eqref{3.31} and \eqref{3.32} can be seen in \cite{bib27}. Next, we prove \eqref{3.33}. It is clear that when $||\sigma_h^{n+1}\tilde{u}_h^{n+1}||_{L^2}=0$, the result holds trivially. When $||\sigma_h^{n+1}\tilde{u}_h^{n+1}||_{L^2}\neq 0$, there exists $\epsilon_0>0$ such that $||\sigma_h^{n+1}\tilde{u}_h^{n+1}||_{L^2}^2\geq \epsilon_0$, using Taylor's expansion and \eqref{3.3}, we derive:
\begin{equation}
\begin{aligned}
	&|1-\gamma_h^{n+1}|\\
	&\leq \frac{1}{\epsilon_0}(||\sigma_h^{n+1}\tilde{u}_h^{n+1}-\sigma_h^n\tilde{u}_h^n||_{L^2}^2-||\sigma_h^n\tilde{u}_h^n||_{L^2}^2+||\sigma_h^nu_h^n||_{L^2}^2)\\
	&\leq C||(\sigma_h^{n+1}\tilde{u}_h^{n+1}\!-\!\sigma_h^{n+1}u^{n+1})\!+\!(\sigma_h^{n+1}u^{n+1}\!-\!\sigma^{n+1}u^{n+1})\\
	&+(\sigma^{n+1}u^{n+1}\!-\!\sigma^{n}u^{n})+(\sigma^{n}u^{n}-\sigma_h^{n}u^{n})+(\sigma_h^{n}u^{n}-\sigma_h^{n}\tilde{u}_h^{n})||_{L^2}^2\\
	&+C(\sigma_h^nu_h^n,\sigma_h^n(u_h^n-u^n+u^n-\tilde{u}_h^n))+C(\sigma_h^n(u_h^n-u^n+u^n-\tilde{u}_h^n),\sigma_h^n\tilde{u}_h^n)\\
	&\leq C||\sigma_h^{n+1}\tilde{e}_{uh}^{n+1}||_{L^2}^2\!+\!C||e_{\sigma h}^{n+1}||_{L^2}^2\!+\!C\tau^2\!+\!C||e_{\sigma h}^n||_{L^2}^2\!+\!C||\sigma_h^n\tilde{e}_{uh}^n||_{L^2}^2\\
	&+C||\sigma_h^n||_{L^{\infty}}(||u_h^n||_{L^{\infty}}+||\tilde{u}_h^n||_{L^{\infty}})(||\sigma_h^ne_{uh}^n||_{L^2}+||\sigma_h^n\tilde{e}_{uh}^n||_{L^2}).\nonumber
\end{aligned}
\end{equation}
The proof is completed.
\end{proof}
\begin{theorem}\label{the.3.6} Under Assumption \ref{as.2.2} and $\tau=O(h^2)$, there exists $\tau^*>0$, if $\tau\leq\tau^*$, it is valid for the scheme \eqref{3.1}-\eqref{3.6} , for $1\leq n\leq N$, that
\begin{align}
||e_{\sigma h}^n||_{L^2}^2+||\bar{e}_{\rho h}^n||_{L^2}^2{+\sum\limits_{i=1}^{n}||\sigma_{h}^{i}-\sigma_{h}^{i-1}||_{L^2}^2}&\leq C(\tau^2+h^4),\label{3.55}\\
|1-\lambda_h^n|^2 &\leq C(\tau^2+h^4), \label{3.56}\\
||\sigma_h^ne_{\rho h}^n||_{L^2}^2 &\leq C(\tau^2+h^4),\label{3.57}\\
||\tilde{e}_{uh}^{n}||_{L^2}^2 +\tau\sum\limits_{i=1}^{n}||\nabla \tilde{e}_{uh}^{i}||_{L^2}^2&\leq C(\tau^2+h^4), \label{3.58}\\
|1-\gamma_h^{n}|^2&\leq C(\tau^2+h^4),\label{3.59}\\
||\sigma_h^ne_{uh}^{n}||_{L^2}^2+\tau\sum\limits_{i=1}^{n}||\nabla e_{uh}^{i}||_{L^2}^2&\leq C(\tau^2+h^4). \label{3.60}
\end{align}
{Moreover, there holds that
\begin{align}\label{new1}
	||\sigma_h^n||_{L^\infty}&+|\lambda_h^n|+||\bar{\rho}_h^n||_{L^\infty}+||\sigma_h^n||_{W^{1,3}}+||D_{\tau}\sigma_{h}^{n}||_{L^3}\nonumber\\
	&+|| \tilde{u}_h^{n}||_{L^\infty}^2+||\nabla \tilde{u}_h^{n}||_{L^3}^2+||{u}_h^{n}||_{L^\infty}^2+||\nabla {u}_h^{n}||_{L^3}^2\leq C.
	\end{align}}
\end{theorem}
\begin{proof}
We will prove the results by using the induction method.

(I) Case of $n=1$.

(I-1)~Through the initial data defined in the scheme \eqref{3.1}-\eqref{3.6}, we know
\begin{align*}
\theta_{\sigma h}^0=\theta_{\rho h}^0=\tilde{\theta}_{uh}^0=\theta_{uh}^0&=0,\\
||u_h^0||_{L^\infty}+||\nabla u_h^0||_{L^2}+||u_h^0||_{W^{1,3}}&\leq C,\\
||\sigma_h^0||_{L^{\infty}}+||\sigma_h^0||_{W^{1,3}}+||\rho_h^0||_{L^\infty}&\leq C,
\end{align*}
which combining with  Lemma \ref{lem.3.3} yields
\begin{equation}
||\theta_{\sigma h}^1||_{L^2}^2+||\theta_{\sigma h}^1-\theta_{\sigma h}^0||_{L^2}^2\leq C(\tau^2+h^4).\label{3.62}
\end{equation}
Then, using the inverse inequality, we   get
\begin{align}
||e_{\sigma h}^1||_{L^2}^2&\leq ||\eta_{\sigma h}^1||_{L^2}^2+||\theta_{\sigma h}^1||_{L^2}^2\leq C(\tau^2+h^4+h^6)\leq C(\tau^2+h^4),\label{3.63}\\
||\nabla e_{\sigma h}^1||_{L^2}^2&\leq ||\nabla \eta_{\sigma h}^1||_{L^2}^2+||\nabla\theta_{\sigma h}^1||_{L^2}^2\leq Ch^4+Ch^{-2}||\theta_{\sigma h}^1||_{L^2}^2\leq Ch^2,\label{4.53}\\
{||\sigma_{h}^{1}-\sigma_{h}^{0}||_{L^2}^2}&{=||\sigma_{h}^{1}-\sigma^{1}+\sigma^{1}-\sigma_{h}^{0}||_{L^2}^2\leq C(\tau^2+h^4).}
\end{align}
Thus,   $||\sigma_h^1||_{L^2}^2-||\sigma^1||_{L^2}^2\leq ||\sigma^1-\sigma_h^1||_{L^2}^2\leq C(\tau^2+h^4)$ implies that $||\sigma_h^1||_{L^2}^2\leq C+C(\tau^2+h^4)\leq C$ and $||\sigma^1+\sigma_h^1||_{L^2}^2\leq C$, which yield
\begin{equation}
||\bar{e}_{\rho h}^1||_{L^2}^2=||(\sigma^1)^2-(\sigma_h^1)^2||_{L^2}^2\leq C||e_{\sigma h}^1||_{L^2}^2\leq C(\tau^2+h^4). \label{3.64}
\end{equation}

(I-2) There exists $\tau_2>0$ and $h_0>0$, if $\tau\leq \min\{\tau_1,\tau_2\}$ and $h\leq h_0$, then $||\bar{e}_{\rho h}^1||_{L^2}^2\leq\epsilon_1<1$ with $\epsilon_1$ being a positive constant (see \eqref{3.64}),  \eqref{3.31} in Lemma \ref{lem.3.5} and \eqref{2.13} imply that

\begin{equation}
\begin{aligned}
	|1-\lambda_h^{1}|^2		&\leq \frac{C||e_{\rho h}^0||_{L^2}^2+C||\bar{e}_{\rho h}^{1}||_{L^2}^2}{1-||\bar{e}_{\rho h}^{1}||_{L^2}^2}\leq \frac{C||e_{\rho h}^0||_{L^2}^2+C||\bar{e}_{\rho h}^{1}||_{L^2}^2}{1-\epsilon_1}\\
	&\leq C(\tau^2+h^4),\label{3.65}
\end{aligned}
\end{equation}
and
\begin{align}
|\lambda_h^{1}|\leq 1+|1-\lambda_h^{1}|^2\leq C.
\end{align}

(I-3)~Using \eqref{3.32} in Lemma \ref{lem.3.5}, \eqref{3.64} and \eqref{3.65}, we can derive
\begin{equation}
||e_{\rho h}^1||_{L^2}^2\leq C(|1-\lambda_h^{1}|^2+|1-\lambda_h^{1}|^2||\bar{e}_{{\rho} h}^{1}||_{L^2}^2+||\bar{e}_{{\rho} h}^{1}||_{L^2}^2)\leq C(\tau^2+h^4).\label{3.66}
\end{equation}

Through the inverse inequality and \eqref{3.62}, we have
\begin{align}
||\sigma_h^1||_{L^\infty}&\leq ||\Pi_h\sigma^1||_{L^\infty}+Ch^{-1}||\theta_{\sigma h}^1||_{L^2}\leq C+Ch\leq C,\\
||\bar{\rho}_h^1||_{L^\infty}&\leq ||\sigma_h^1||_{L^{\infty}}^2\leq C.\\
||\sigma_h^1||_{W^{1,3}}&\leq ||\Pi_h\sigma^1||_{W^{1,3}}+||\nabla\theta_{\sigma h}^1||_{L^3}\leq C+Ch^{\frac{2}{3}}\leq C,\\
\end{align}

Taking $r_h=D_{\tau}\theta_{\sigma h}^{1}\in W_h$ in \eqref{3.21}, using $\tau\leq Ch^2$, \eqref{2.13}, \eqref{3.63}-\eqref{4.53}, we can estimate $||D_{\tau}\theta_{\sigma h}^{1}||_{L^2}^2$ as follows
\begin{equation}
\begin{aligned}
	||D_{\tau}\theta_{\sigma h}^{1}||_{L^2}^2&\leq ||\nabla \sigma^{1}||_{L^{\infty}}||e_{uh}^0||_{L^2}||D_{\tau}\theta_{\sigma h}^{1}||_{L^2}+||u_h^0||_{L^{\infty}}||\nabla e_{\sigma h}^{1}||_{L^2}||D_{\tau}\theta_{\sigma h}^{1}||_{L^2}\\
	&+\frac{1}{2}||\sigma^{1}||\!_{L^{\infty}}\!||\nabla e_{uh}^0||\!_{L^2}\!||D_{\tau}\theta_{\sigma h}^{1}||\!_{L^2}\!+\!\frac{1}{2}||\nabla u_h^0||\!_{L^{\infty}}\!||e_{\sigma h}^{1}||\!_{L^2}\!||D_{\tau}\theta_{\sigma h}^{1}||\!_{L^2}\\
	&+||R_{\sigma 1}^{1}||_{L^2}||D_{\tau}\theta_{\sigma h}^{1}||_{L^2}\\
	&\leq Ch^2||D_{\tau}\theta_{\sigma h}^{1}||_{L^2}\!\!+\!Ch||D_{\tau}\theta_{\sigma h}^{1}||_{L^2}\!\!+\!C(||\nabla\eta_{uh}^0||_{L^2}\!\!+\!0)||D_{\tau}\theta_{\sigma h}^{1}||_{L^2}\\
	&+C\tau||D_{\tau}\theta_{\sigma h}^{1}||_{L^2}\\
	&\leq Ch^2||D_{\tau}\theta_{\sigma h}^{1}||_{L^2}+Ch||D_{\tau}\theta_{\sigma h}^{1}||_{L^2}\\
	&\leq Ch^4+\frac{1}{2}||D_{\tau}\theta_{\sigma h}^{1}||_{L^2}^2+Ch^2\\
	&\leq Ch^2, \nonumber
\end{aligned}
\end{equation}
which contributes to
\begin{equation}
||D_{\tau}\theta_{\sigma h}^{1}||_{L^3}\leq Ch^{-\frac{1}{3}}||D_{\tau}\theta_{\sigma h}^{1}||_{L^2}\leq Ch^\frac{2}{3}\leq C.
\end{equation}
On the other hand, we can easily obtain
\begin{equation*}
||D_{\tau}\sigma_h^{1}||_{L^3}\leq ||D_{\tau}(\Pi_h\sigma^{1})||_{L^3}+||D_{\tau}\theta_{\sigma h}^{1}||_{L^3}\leq C+Ch^{\frac{2}{3}}\leq C.
\end{equation*}

(I-4) Employing Lemma \ref{lem.3.4} and inequalities mentioned above, we can deduce
\begin{equation}
\begin{aligned}	&||\sigma_h^{1}\tilde{\theta}_{uh}^{1}||_{L^2}^2-||\sigma_h^{0}\tilde{\theta}_{uh}^{0}||_{L^2}^2+||\sigma_h^{1}\tilde{\theta}_{uh}^{1}-\sigma_h^0\tilde{\theta}_{uh}^0||_{L^2}^2+\mu\tau||\nabla \tilde{\theta}_{uh}^{1}||_{L^2}^2\\
	&\leq C\tau(\tau^2+h^4)+C\tau||\sigma_h^{1}\tilde{\theta}_{uh}^1||_{L^2}^2.\label{3.70}
\end{aligned}
\end{equation}
There exists $\tau_3>0$, if $\tau\leq \tau*:=\min\{\tau_1,\tau_2,\tau_3\}$, then $1-C\tau>0$, thus we obtain
\begin{equation}
||\sigma_h^{1}\tilde{\theta}_{uh}^{1}||_{L^2}^2\!\!+\!||\sigma_h^{1}\tilde{\theta}_{uh}^{1}\!\!-\!\sigma_h^0\tilde{\theta}_{uh}^0||_{L^2}^2\!\!+\!\mu\tau||\nabla \tilde{\theta}_{uh}^{1}||_{L^2}^2\!\!\leq\! C\tau^3+C\tau h^4\leq C\tau(\tau^2+h^4),\nonumber
\end{equation}
which implies
\begin{equation}
||\sigma_h^1\tilde{\theta}_{uh}^{1}||_{L^2}^2 +C\tau||\nabla\tilde{\theta}_{uh}^{1}||_{L^2}^2\leq C\tau(\tau^2+h^4),\label{3.72}
\end{equation}
and
\begin{equation}
||\sigma_h^1\tilde{e}_{uh}^{1}||_{L^2}^2 +\tau||\nabla \tilde{e}_{uh}^{1}||_{L^2}^2\leq C\tau(\tau^2+h^4).\label{after3.66}
\end{equation}

(I-5)~By applying \eqref{3.33} and \eqref{2.13}, we can draw the conclusion that:
\begin{equation}
\begin{aligned}
	|1-\gamma_h^1|^2&\leq C\tau^4+C(||\sigma_h^1\tilde{e}_{uh}^1||_{L^2}^4+||e_{\sigma h}^1||_{L^2}^4+||e_{\sigma h}^0||_{L^2}^4+||\sigma_h^0\tilde{e}_{uh}^0||_{L^2}^4)\\
	&+C(||\sigma_h^0e_{uh}^0||_{L^2}^2+||\sigma_h^0\tilde{e}_{uh}^0||_{L^2}^2)\\
	&\leq C\tau(\tau^2+h^4).\label{3.73}
\end{aligned}
\end{equation}

(I-6)~Utilizing \eqref{3.72}, we derive:
\begin{equation}
\begin{aligned}
	||\sigma_h^1\theta_{uh}^{1}||_{L^2}^2 &=||\sigma_h^1(R_hu^{1}-\tilde{u}_h^{1})+\sigma_h^1(\tilde{u}_h^{1}-u_h^{1})||_{L^2}^2\\
	&\leq ||\sigma_h^1\tilde{\theta}_{uh}^{1}||_{L^2}^2+||\sigma_h^1(\tilde{u}_h^{1}-\sqrt{\gamma_{h}^{1}}\tilde{u}_h^{1})||_{L^2}^2\\
	&\leq C\tau(\tau^2+h^4)+|1-\sqrt{\gamma_{h}^{1}}|^2||\sigma_h^1\tilde{u}_h^{1}||_{L^2}^2. \label{3.74}
\end{aligned}
\end{equation}
Since $\tau= O(h^2)$, then when $h$ is sufficiently small, we can get $0\leq 1-Ch^2\leq \gamma_{h}^{1}\leq 1+Ch^2$ from \eqref{3.73}, it follows that $1+\sqrt{\gamma_{h}^{1}}$ is lower boundedness  and
\begin{equation}
|1-\sqrt{\gamma_{h}^{1}}|^2\leq \left|\frac{1-\gamma_{h}^{1}}{1+\sqrt{\gamma_{h}^{1}}}\right|^2\leq C\tau(\tau^2+h^4).\label{3.75}
\end{equation}
Noting \eqref{3.74} and
\begin{equation}
||\tilde{u}_h^{1}||_{L^2}^2\leq ||u^{1}||_{L^2}^2+||\tilde{e}_{uh}^{1}||_{L^2}^2\leq C+C\tau(\tau^2+h^4)\leq C,\nonumber
\end{equation}
we can deduce that
\begin{equation}
||\sigma_h^1\theta_{uh}^{1}||_{L^2}^2\leq C\tau(\tau^2+h^4). \nonumber
\end{equation}
Using \eqref{2.3}, \eqref{3.74}, \eqref{3.72}, \eqref{3.75}, the condition $\tau\leq Ch^2$ and inequalities

\begin{align}		
||\nabla \tilde{u}_h^{1}||_{L^2}^2\leq& 2(||\nabla R_hu^{1}||_{L^2}^2+||\nabla \tilde{\theta}_{uh}^{1}||_{L^2}^2)\leq C+Ch^{-2}||\tilde{\theta}_{uh}^{1}||_{L^2}^2\leq C,\label{62}\\
|| \tilde{u}_h^{1}||_{L^\infty}^2\leq& 2(||R_hu^{1}||_{L^\infty}^2+|| \tilde{\theta}_{uh}^{1}||_{L^\infty}^2)\leq C+Ch^{-2}||\tilde{\theta}_{uh}^{1}||_{L^2}^2\leq C,\label{63}
\end{align}
we obtain that
\begin{equation}
\begin{aligned}
	\tau||\nabla \theta_{uh}^{1}||_{L^2}^2&\leq\tau||\nabla \tilde{\theta}_{uh}^{1}||_{L^2}^2+\tau|1-\sqrt{\gamma_{h}^{1}}|^2||\nabla \tilde{u}_h^{1}||_{L^2}^2\\
	&\leq C\tau(\tau^2+h^4).\label{4.54}
\end{aligned}
\end{equation}

Therefore, it is valid that
\begin{equation}
||\sigma_h^1e_{uh}^{1}||_{L^2}^2+\tau||\nabla e_{uh}^{1}||_{L^2}^2\leq C(\tau^2+h^4).\label{after3.72}
\end{equation}
{ Similarly to \eqref{62}-\eqref{63}, thank to \eqref{2.1} and \eqref{4.54}, there hold that
\begin{align}		
	||\nabla \tilde{u}_h^{1}||_{L^2}^2+|| \tilde{u}_h^{1}||_{L^\infty}^2\leq&  C,\label{62}\\
	|| \nabla\tilde{u}_h^{1}||_{L^3}^2\leq 2(||
	\nabla R_hu^{1}||_{L^3}^2+|| \nabla\tilde{\theta}_{uh}^{1}||_{L^3}^2)\leq& C.\label{63}
	\end{align}}
	
	(II) Assuming that \eqref{3.55} to \eqref{new1} are valid for $m =0, 1, 2, \dots , n-1(1 \leq n \leq N)$, following the similar process in (I), we can prove that they hold for $m = n$, too. The proof is completed.
\end{proof}
{\begin{lemma}\label{la.3.7}
Under Assumptions of Theorem \ref{the.3.6}, it is valid for the scheme \eqref{3.1}-\eqref{3.6} , for $1\leq n\leq N$, that
\begin{align}\label{3.new3}
	||D_\tau e_{\sigma h}^{n}||_{L^2}^2\leq C(\tau+h^2).
\end{align}
\end{lemma}
\begin{proof}
Firstly,  taking $r_h=D_\tau\theta_{\sigma h}^{n+1}\in W_h$ in \eqref{3.21} yields
\begin{equation}
	\begin{aligned}
		&||D_\tau\theta_{\sigma h}^{n+1}||_{L^2}^2\\
		&= -(D_\tau\eta_{\sigma h}^{n+1},D_\tau\theta_{\sigma h}^{n+1})-(\nabla\sigma^{n+1}e_{uh}^n, D_\tau\theta_{\sigma h}^{n+1})-(u_h^n\cdot\nabla e_{\sigma h}^{n+1}, D_\tau\theta_{\sigma h}^{n+1})\\
		&-\frac{1}{2}(\sigma^{n+1}\nabla\cdot e_{uh}^n,D_\tau\theta_{\sigma h}^{n+1})-\frac{1}{2}(\nabla\cdot u_h^ne_{\sigma h}^{n+1},D_\tau\theta_{\sigma h}^{n+1})+(R_{\sigma }^{n+1}, D_\tau\theta_{\sigma h}^{n+1}).\label{3.new2}
	\end{aligned}	
\end{equation}
Thanks to Theorem \ref{the.3.6}, \eqref{3.16}, the inverse inequality and Assumption 2.1, there hold
\begin{align*}
	|-(D_\tau\eta_{\sigma h}^{n+1},D_\tau\theta_{\sigma h}^{n+1})|\leq& ||D_\tau\eta_{\sigma h}^{n+1}||_{L^2}||D_\tau\theta_{\sigma h}^{n+1}||_{L^2}\\
	\leq& \frac{1}{12}||D_\tau\theta_{\sigma h}^{n+1}||_{L^2}^2+C||D_\tau\eta_{\sigma h}^{n+1}||_{L^2}^2\\
	\leq& \frac{1}{12}||D_\tau\theta_{\sigma h}^{n+1}||_{L^2}^2+Ch^4,\\
	|-(\nabla\sigma^{n+1}e_{uh}^n, D_\tau\theta_{\sigma h}^{n+1})|\leq&C||\nabla\sigma^{n+1}||_{L^\infty}||\nabla e_{uh}^n||_{L^2}||D_\tau\theta_{\sigma h}^{n+1}||_{L^2}\\
	\leq& \frac{1}{12}||D_\tau\theta_{\sigma h}^{n+1}||_{L^2}^2+C(\tau+h^2),\\
	|-(u_h^n\cdot\nabla e_{\sigma h}^{n+1}, D_\tau\theta_{\sigma h}^{n+1})|=&|-(\nabla\cdot (e_{\sigma h}^{n+1}u_h^n)-e_{\sigma h}^{n+1}\nabla\cdot u_h^n, D_\tau\theta_{\sigma h}^{n+1})|\\
	=&|(e_{\sigma h}^{n+1}u_h^n, \nabla D_\tau\theta_{\sigma h}^{n+1})-(e_{\sigma h}^{n+1}\nabla\cdot u_h^n, D_\tau\theta_{\sigma h}^{n+1})|\\
	\leq&|| e_{\sigma h}^{n+1}||_{L^2}||u_h^n||_{L^\infty} ||\nabla D_\tau\theta_{\sigma h}^{n+1}||_{L^2}\\
	+&|| e_{\sigma h}^{n+1}||_{L^2}||\nabla u_h^n||_{L^\infty} || D_\tau\theta_{\sigma h}^{n+1}||_{L^2}\\
	\leq&Ch^{-1}(|| e_{\sigma h}^{n+1}||_{L^2}||u_h^n||_{L^\infty} || D_\tau\theta_{\sigma h}^{n+1}||_{L^2}\\
	+&|| e_{\sigma h}^{n+1}||_{L^2}||\nabla u_h^n||_{L^2} || D_\tau\theta_{\sigma h}^{n+1}||_{L^2})\\
	\leq& \frac{1}{12}||D_\tau\theta_{\sigma h}^{n+1}||_{L^2}^2+C(\tau+h^2),\\
	|-\frac{1}{2}(\sigma^{n+1}\nabla\cdot e_{uh}^n,D_\tau\theta_{\sigma h}^{n+1})|
	\leq& C||\sigma^{n+1}||_{L^\infty} ||\nabla e_{uh}^n||_{L^2}|| D_\tau\theta_{\sigma h}^{n+1}||_{L^2}\\
	\leq& C||\sigma^{n+1}||_{L^\infty} ||\nabla e_{uh}^n||_{L^2}|| D_\tau\theta_{\sigma h}^{n+1}||_{L^2}\\
	\leq& \frac{1}{12}||D_\tau\theta_{\sigma h}^{n+1}||_{L^2}^2+(\tau+h^2),\\
	|-\frac{1}{2}(\nabla\cdot u_h^ne_{\sigma h}^{n+1},D_\tau\theta_{\sigma h}^{n+1})|\leq&\frac{1}{2}||\nabla u_h^n||_{L^\infty}||e_{\sigma h}^{n+1}||_{L^2}||D_\tau\theta_{\sigma h}^{n+1}||_{L^2}\\
	\leq&\frac{1}{2}h^{-1}||\nabla  u_h^n||_{L^2}||e_{\sigma h}^{n+1}||_{L^2}||D_\tau\theta_{\sigma h}^{n+1}||_{L^2}\\
	\leq& \frac{1}{12}||D_\tau\theta_{\sigma h}^{n+1}||_{L^2}^2+C(\tau+h^2),
\end{align*}
\begin{align*}
	|(R_{\sigma }^{n+1}, D_\tau\theta_{\sigma h}^{n+1}|\leq&||R_{\sigma }^{n+1}||_{L^2}||D_\tau\theta_{\sigma h}^{n+1})||_{L^2}\\
	\leq& \frac{1}{12}||D_\tau\theta_{\sigma h}^{n+1}||_{L^2}^2+C\tau^2.
\end{align*}
Putting above estimates into \eqref{3.new2}, we get
\begin{align*}
	||D_\tau\theta_{\sigma h}^{n+1}||_{L^2}^2\leq C(\tau+h^2),
\end{align*}
which combining with the fact $e_{\sigma h}^n=\eta_{\sigma h}^n+\theta_{\sigma h}^n$ and the triangle inequality yields \eqref{3.new3}. The proof is completed.
\end{proof}
\begin{lemma}\label{la.3.8}
Under Assumptions of Theorem \ref{the.3.6}, it is valid for the scheme \eqref{3.1}-\eqref{3.6} , for $1\leq n\leq N$, that
\begin{align}\label{3.new4}
	\tau\sum_{i=1}^m||D_\tau (\sigma_h^{i}\tilde{\theta}_{uh}^{i})||_{L^2}^2\leq C(\tau+h^2).
\end{align}
\end{lemma}
\begin{proof}
Taking $(v_h,q_h)=(2\tau D_\tau\tilde{\theta}_{uh}^{n+1}, 0)$ in \eqref{3.22} and noting the equalities
\begin{align*}
	&\mu(\nabla\tilde{\theta}_{uh}^{n+1}, 2\tau \nabla D_\tau\tilde{\theta}_{uh}^{n+1})-(\nabla\cdot 2\tau D_\tau\tilde{\theta}_{uh}^{n+1}, \theta_{ph}^{n+1})+(\nabla\cdot \tilde{\theta}_{uh}^{n+1}, 0)\\
	&= \mu||\nabla\tilde{\theta}_{uh}^{n+1}||_{L^2}^2-\mu||\nabla\tilde{\theta}_{uh}^{n}||_{L^2}^2+\mu||\nabla(\tilde{\theta}_{uh}^{n+1}-\tilde{\theta}_{uh}^{n})||_{L^2}^2,
\end{align*}
\begin{align*}
	&(\sigma_h^{n+1}D_{\tau}(\sigma_h^{n+1}\tilde{\theta}_{uh}^{n+1}), 2\tau D_\tau\tilde{\theta}_{uh}^{n+1})\\
	&=2\tau (D_{\tau}(\sigma_h^{n+1}\tilde{\theta}_{uh}^{n+1}), \sigma_h^{n+1}D_\tau\tilde{\theta}_{uh}^{n+1})\\
	&=2\tau (D_{\tau}(\sigma_h^{n+1}\tilde{\theta}_{uh}^{n+1}), D_\tau(\sigma_h^{n+1}\tilde{\theta}_{uh}^{n+1})-\tilde{\theta}_{uh}^{n}D_\tau\sigma_h^{n+1})\\
	&=2\tau ||D_{\tau}(\sigma_h^{n+1}\tilde{\theta}_{uh}^{n+1})||_{L^2} -2\tau(D_\tau(\sigma_h^{n+1}\tilde{\theta}_{uh}^{n+1}),\tilde{\theta}_{uh}^{n}D_\tau\sigma_h^{n+1}),
\end{align*}
we obtain
\begin{align}
	&2\tau ||D_{\tau}(\sigma_h^{n+1}\tilde{\theta}_{uh}^{n+1})||_{L^2}+\mu(||\nabla\tilde{\theta}_{uh}^{n+1}||_{L^2}^2-||\nabla\tilde{\theta}_{uh}^{n}||_{L^2}^2+||\nabla(\tilde{\theta}_{uh}^{n+1}-\tilde{\theta}_{uh}^{n})||_{L^2}^2)\nonumber\\
	=&2\tau(D_\tau(\sigma_h^{n+1}\tilde{\theta}_{uh}^{n+1}),\tilde{\theta}_{uh}^{n}D_\tau\sigma_h^{n+1})\!\!+\!\!2\tau (R_{u}^{n+1}, D_\tau\tilde{\theta}_{uh}^{n+1})\!\!-\!\!2\tau\sum\limits_{i=1}^{9}(Y_{i}^{n+1},  D_\tau\tilde{\theta}_{uh}^{n+1}),\label{n3.22}
\end{align}
where $Y_{i}^{n+1}$ are defined below \eqref{3.22}. First, due to Theorem \ref{the.3.6}, Lemma \ref{lem.3.2} and the inverse inequality, there hold
\begin{align*}
	&|2\tau(D_\tau(\sigma_h^{n+1}\tilde{\theta}_{uh}^{n+1}),\tilde{\theta}_{uh}^{n}D_\tau\sigma_h^{n+1})|\\
	&\leq 2\tau||D_\tau(\sigma_h^{n+1}\tilde{\theta}_{uh}^{n+1})||_{L^2}||\tilde{\theta}_{uh}^{n}||_{L^\infty}||D_\tau\sigma_h^{n+1}||_{L^2}\\
	&\leq C\tau||D_\tau(\sigma_h^{n+1}\tilde{\theta}_{uh}^{n+1})||_{L^2}h^{-1}||\tilde{\theta}_{uh}^{n}||_{L^2}||D_\tau\sigma_h^{n+1}||_{L^2}\\
	&\leq C\tau||D_\tau(\sigma_h^{n+1}\tilde{\theta}_{uh}^{n+1})||_{L^2}h^{-1}||\nabla \tilde{\theta}_{uh}^{n}||_{L^2}||D_\tau\sigma_h^{n+1}||_{L^2}\\
	&\leq \frac{\tau}{16}||D_\tau(\sigma_h^{n+1}\tilde{\theta}_{uh}^{n+1})||_{L^2}^2+C\tau(\tau+h^2),
\end{align*}

\begin{align*}
	&|2\tau (R_{u}^{n+1}, D_\tau\tilde{\theta}_{uh}^{n+1})|\\
	&=|2 (R_{u}^{n+1}, \tilde{\theta}_{uh}^{n+1}-\tilde{\theta}_{uh}^{n})|\\
	&\leq C||R_{u}^{n+1}||_{L^2} ||\nabla(\tilde{\theta}_{uh}^{n+1}-\tilde{\theta}_{uh}^{n})||_{L^2}\\
	&\leq \frac{\mu}{16}||\nabla(\tilde{\theta}_{uh}^{n+1}-\tilde{\theta}_{uh}^{n})||_{L^2}^2+C\tau^2.
\end{align*}
Second, applying the equality \eqref{cue} and Theorem \ref{the.3.6} again, we have
\begin{align*}
	|-2\tau(Y_{1}^{n+1},  D_\tau\tilde{\theta}_{uh}^{n+1})|=&|-2(Y_{1}^{n+1},  \tilde{\theta}_{uh}^{n+1}-\tilde{\theta}_{uh}^{n})|\\
	=&|-2(e_{\sigma h}^{n+1}D_{\tau}(\sigma^{n+1}u^{n+1}),  \tilde{\theta}_{uh}^{n+1}-\tilde{\theta}_{uh}^{n})|\\
	\leq &2||e_{\sigma h}^{n+1}||_{L^2}||D_{\tau}(\sigma^{n+1}u^{n+1})||_{L^\infty}||\tilde{\theta}_{uh}^{n+1}-\tilde{\theta}_{uh}^{n}||_{L^2}\\
	\leq&C||e_{\sigma h}^{n+1}||_{L^2}||(\sigma^{n+1}u^{n+1})_t+O(\tau)||_{L^\infty}\cdot \\
	&||\nabla(\tilde{\theta}_{uh}^{n+1}-\tilde{\theta}_{uh}^{n})||_{L^2}\\
	\leq& \frac{\mu}{16}||\nabla(\tilde{\theta}_{uh}^{n+1}-\tilde{\theta}_{uh}^{n})||_{L^2}^2+C(\tau^2+h^4),\\
	|-2\tau(Y_{3}^{n+1},  D_\tau\tilde{\theta}_{uh}^{n+1})|=&|-2(\sigma_h^{n+1}D_{\tau}(\sigma_h^{n+1}\eta_{uh}^{n+1}),  \tilde{\theta}_{uh}^{n+1}-\tilde{\theta}_{uh}^{n})|\\
	\leq& 2||\sigma_h^{n+1}||_{L^\infty}||D_{\tau}(\sigma_h^{n+1}\eta_{uh}^{n+1})||_{L^2}||\tilde{\theta}_{uh}^{n+1}-\tilde{\theta}_{uh}^{n}||_{L^2}\\
	\leq& C||\sigma_h^{n+1}||_{L^\infty}||\sigma_h^{n+1}D_{\tau}\eta_{uh}^{n+1}+\eta_{uh}^{n+1}D_{\tau}\sigma_h^{n+1}||_{L^2}\cdot\\
	&~~~||\nabla(\tilde{\theta}_{uh}^{n+1}-\tilde{\theta}_{uh}^{n})||_{L^2}\\
	\leq& 2||\sigma_h^{n+1}||_{L^\infty}(||\sigma_h^{n+1}D_{\tau}\eta_{uh}^{n+1}||_{L^2}+||\eta_{uh}^{n+1}D_{\tau}\sigma_h^{n+1}||_{L^2})\cdot\\
	&||\nabla(\tilde{\theta}_{uh}^{n+1}-\tilde{\theta}_{uh}^{n})||_{L^2}\\
	\leq& 2||\sigma_h^{n+1}||_{L^\infty}(||\sigma_h^{n+1}||_{L^\infty}||D_{\tau}\eta_{uh}^{n+1}||_{L^2}\\
	&+||\eta_{uh}^{n+1}||_{L^2}||D_{\tau}\sigma_h^{n+1}||_{L^\infty})(||\nabla(\tilde{\theta}_{uh}^{n+1}-\tilde{\theta}_{uh}^{n})||_{L^2})\\
	\leq& \frac{\mu}{16}||\nabla(\tilde{\theta}_{uh}^{n+1}-\tilde{\theta}_{uh}^{n})||_{L^2}^2+C(\tau^2+h^4),\\
	|-2\tau(Y_{4}^{n+1},  D_\tau\tilde{\theta}_{uh}^{n+1})|=&|-2(e_{\rho h}^{n}(u^n\cdot\nabla)u^{n+1},  \tilde{\theta}_{uh}^{n+1}-\tilde{\theta}_{uh}^{n})|\\
	\leq&C||e_{\rho h}^{n}||_{L^2}||(u^n\cdot\nabla)u^{n+1}||_{L^\infty}||\nabla(\tilde{\theta}_{uh}^{n+1}-\tilde{\theta}_{uh}^{n})||_{L^2}^2\\
	\leq&\frac{\mu}{16}||\nabla(\tilde{\theta}_{uh}^{n+1}-\tilde{\theta}_{uh}^{n})||_{L^2}^2+C(\tau^2+h^4),\\
	|-2\tau(Y_{5}^{n+1},  D_\tau\tilde{\theta}_{uh}^{n+1})|=&|-2(\rho_h^{n}(e_{uh}^n\cdot\nabla)u^{n+1},  \tilde{\theta}_{uh}^{n+1}-\tilde{\theta}_{uh}^{n})|\\
	\leq&2||\lambda_h^{n}\sigma_h^n\nabla u^{n+1}||_{L^\infty}||\sigma_h^ne_{uh}^n||_{L^2}||\nabla(\tilde{\theta}_{uh}^{n+1}-\tilde{\theta}_{uh}^{n})||_{L^2}^2\\
	\leq& \frac{\mu}{16}||\nabla(\tilde{\theta}_{uh}^{n+1}-\tilde{\theta}_{uh}^{n})||_{L^2}^2+C(\tau^2+h^4),
\end{align*}
\begin{align*}
	|-2\tau(Y_{6}^{n+1},  D_\tau\tilde{\theta}_{uh}^{n+1})|&=|-2(\rho_h^{n}(u_h^n\cdot\nabla)\tilde{e}_{uh}^{n+1},  \tilde{\theta}_{uh}^{n+1}-\tilde{\theta}_{uh}^{n})|\\
	=&|2(\nabla\cdot(\rho_h^{n}u_h^n\otimes\tilde{e}_{uh}^{n+1})\!\!-\!(\nabla\cdot(\rho_h^{n}u_h^n))\tilde{e}_{uh}^{n+1},\tilde{\theta}_{uh}^{n+1}\!\!-\!\tilde{\theta}_{uh}^{n})|\\
	=&2|(\rho_h^{n}u_h^n\otimes\tilde{e}_{uh}^{n+1},\nabla(\tilde{\theta}_{uh}^{n+1}-\tilde{\theta}_{uh}^{n}))\\
	&-(\nabla\cdot(\rho_h^{n}u_h^n)\tilde{e}_{uh}^{n+1},\tilde{\theta}_{uh}^{n+1}-\tilde{\theta}_{uh}^{n})|\\
	\leq& C||\lambda_h^{n}\sigma_h^nu_h^n||_{L^\infty}||\sigma_h^n\tilde{e}_{uh}^{n+1}||_{L^2}||\nabla(\tilde{\theta}_{uh}^{n+1}-\tilde{\theta}_{uh}^{n})||_{L^2}\\
	&+C(||\sigma_h^n\nabla\lambda_h^1\!\cdot\! u_h^n||_{L^3}+||\lambda_h^n\nabla\sigma_h^1\cdot\! u_h^n||_{L^3}\\
	&+||\sigma_h^n\sigma_h^1\nabla\cdot\!u_h^n||_{L^3})\cdot||\sigma_h^n\tilde{e}_{uh}^{n+1}||_{L^2}||\tilde{\theta}_{uh}^{n+1}-\tilde{\theta}_{uh}^{n}||_{L^6}\\
	\leq& C(||\sigma_h^{n+1}\tilde{e}_{uh}^{n+1}||_{L^2}+||\sigma_h^{n+1}-\sigma_h^{n}||_{L^2}||\tilde{e}_{uh}^{n+1}||_{L^2})\cdot\\
	&||\nabla(\tilde{\theta}_{uh}^{n+1}-\tilde{\theta}_{uh}^{n})||_{L^2}[||\lambda_h^{n}\sigma_h^nu_h^n||_{L^\infty}\\
	&+C(||\sigma_h^n\nabla\lambda_h^1\!\cdot\! u_h^n||_{L^3}+||\lambda_h^n\nabla\sigma_h^1\cdot u_h^n||_{L^3}\\
	&+||\sigma_h^n\sigma_h^1\nabla\cdot u_h^n||_{L^3}\!)]\\
	\leq& \frac{\mu}{32}||\nabla(\tilde{\theta}_{uh}^{n+1}-\tilde{\theta}_{uh}^{n})||_{L^2}^2+C(\tau^2+h^4),\\
	|-2\tau(Y_{7}^{n+1},  D_\tau\tilde{\theta}_{uh}^{n+1})|=&|-(u^{n+1}\nabla\cdot(e_{\rho h}^{n}u^n),  \tilde{\theta}_{uh}^{n+1}\!\!-\!\!\tilde{\theta}_{uh}^{n})|\\
	=&|\nabla\!\cdot\!((e_{\rho h}^{n}u^n)\otimes u^{n+1})\!\!-\!\!((e_{\rho h}^{n}u^n)\cdot\nabla)u^{n+1},\tilde{\theta}_{uh}^{n+1}\!\!-\!\!\tilde{\theta}_{uh}^{n})|\\
	=&|((e_{\rho h}^{n}u^n)\otimes u^{n+1}, \nabla( \tilde{\theta}_{uh}^{n+1}-\tilde{\theta}_{uh}^{n}))\\
	&-(((e_{\rho h}^{n}u^n)\cdot\nabla)u^{n+1},  \tilde{\theta}_{uh}^{n+1}-\tilde{\theta}_{uh}^{n})|\\
	\leq&||e_{\rho h}^{n}||_{L^2}||u^n||_{L^\infty}||u^{n+1}||_{L^\infty}||\nabla(\tilde{\theta}_{uh}^{n+1}-\tilde{\theta}_{uh}^{n})||_{L^2}\\
	&+||e_{\rho h}^{n}||_{L^2}||u^n||_{L^\infty}||\nabla u^{n+1}||_{L^\infty} ||\nabla(\tilde{\theta}_{uh}^{n+1}-\tilde{\theta}_{uh}^{n})||_{L^2}\\
	\leq& \frac{\mu}{32}||\nabla(\tilde{\theta}_{uh}^{n+1}-\tilde{\theta}_{uh}^{n})||_{L^2}^2+C(\tau^2+h^4),\\
	|-2\tau(Y_{8}^{n+1},  D_\tau\tilde{\theta}_{uh}^{n+1})|=&|-(u^{n+1}\nabla\cdot(\rho_h^{n}e_{uh}^n),  \tilde{\theta}_{uh}^{n+1}-\tilde{\theta}_{uh}^{n})|\\
	=&|((\rho_h^{n}e_{uh}^n)\otimes u^{n+1},\nabla(\tilde{\theta}_{uh}^{n+1}-\tilde{\theta}_{uh}^{n}))\\
	&-(((\rho_h^{n}e_{uh}^n)\cdot\nabla) u^{n+1},\tilde{\theta}_{uh}^{n+1}-\tilde{\theta}_{uh}^{n})|\\
	\leq& \frac{\mu}{32}||\nabla(\tilde{\theta}_{uh}^{n+1}-\tilde{\theta}_{uh}^{n})||_{L^2}^2+C(\tau^2+h^4),\\
	|-2\tau(Y_{9}^{n+1},  D_\tau\tilde{\theta}_{uh}^{n+1})|=&|-(\tilde{e}_{uh}^{n+1}\nabla\cdot(\rho_h^{n}u_h^n),  \tilde{\theta}_{uh}^{n+1}-\tilde{\theta}_{uh}^{n})|\\
	\leq& \frac{\mu}{32}||\nabla(\tilde{\theta}_{uh}^{n+1}-\tilde{\theta}_{uh}^{n})||_{L^2}^2+C(\tau^2+h^4).
\end{align*}
Finally, using Theorem \ref{the.3.6} and following the similar process in proving Lemma \ref{lemma4.3}, we obtain
\begin{align*}
	|-2\tau(Y_{2}^{n+1},  D_\tau\tilde{\theta}_{uh}^{n+1})|=&|-2(\sigma_h^{n+1}D_{\tau}(e_{\sigma h}^{n+1}u^{n+1}),  \tilde{\theta}_{uh}^{n+1}-\tilde{\theta}_{uh}^{n})|\\
	\leq& \frac{\mu}{32}||\nabla(\tilde{\theta}_{uh}^{n+1}-\tilde{\theta}_{uh}^{n})||_{L^2}^2+C(\tau^2+h^4).\\
\end{align*}
Substituting these estimates into \eqref{n3.22} and taking a summation with respect to $n$, we arrive at \eqref{3.new4}.
\end{proof}
\begin{theorem}\label{the.3.9}
Under Assumptions of Theorem \ref{the.3.6}, it is valid for the scheme \eqref{3.1}-\eqref{3.6} , for $1\leq n\leq N$, that
\begin{align}\label{3.new6}
	\tau\sum_{i=1}^m||e_{ph}^{i}||_{L^2}^2\leq C(\tau+h^2).
\end{align}
\begin{proof}
	Using \eqref{insu} and   \eqref{3.22}, we get
	\begin{align*}
		&\beta_h||\theta_{ph}^{n+1}||_{L^2}\\
		\leq&\!\!\!\sup\limits_{v_h\in V_hv_h\neq0}\frac{(\nabla\cdot v_h,\theta_{ph}^{n+1})}{||\nabla v_h||_{L^2}}\nonumber\\
		\leq&\!\!\!\sup\limits_{v_h\in V_hv_h\neq0}\!\!\frac{(\sigma_h^{n+1}D_{\tau}(\sigma_h^{n+1}\tilde{\theta}_{uh}^{n+1}), v_h)\!\!+\!\!\mu(\nabla\tilde{\theta}_{uh}^{n+1}, \nabla v_h)\!\!-\!\!(R_{u}^{n+1}, v_h)\!\!+\!\!\sum\limits_{i=1}^{9}(Y_{i}^{n+1}, v_h)}{||\nabla v_h||_{L^2}}.
	\end{align*}
	Due to the estimates in the proof of Lemma \ref{la.3.8} and \eqref{3.new4}, there holds
	\begin{align*}
		\tau\sum_{i=1}^m||\theta_{ph}^{n+1}||_{L^2}^2\leq C(\tau+h^2),
	\end{align*}
	which combining with the fact $e_{ph}^{n}=\eta_{ph}^{n}+\theta_{ph}^{n}$ and the triangle inequality yields \eqref{3.new6}. The proof is completed.
\end{proof}
\end{theorem}
\begin{remark}
Although the error estimate for the pressure proved in Theorem \ref{the.3.9} is lower than that for other functions provided in Theorem  \ref{the.3.6},  it is the same  as that in \cite{bib16} which is the best result for the Navier-Stokes equations with variable density in the reference.
\end{remark}}

\section{Numerical Results}\label{sec.5}
In this section, we will show some numerical examples to demonstrate the convergence orders  and the efficiency of the proposed scheme. {All simulations in the following are implemented by using FreeFEM \cite{hecht}.}
\subsection{Convergence order}
Firstly, we verify the convergence order of the proposed scheme. Let the domain $\Omega$ be a unit circle and the analytical solution as \cite{bib26}
\begin{align*}
\rho(x, y, t)&=2+x\cos(\sin(t))+y\sin(\sin(t)), \\
u(x, y, t)&=(-y\cos(t), x\cos(t))^{\top}, \\
p(x, y, t)&=\sin(x)\sin(y)\sin(t).
\end{align*}
With $\mu = 0.1 $ and the time step $\tau=\frac{1}{2^i},~i=3,4,5,6,7$, we collect the numerical results in Tables 1 and 2, from which we can see that the expectant convergence orders are got for all tested cases.

\subsection{Property-preserving test}
In this part, we test the property-preserving of the proposed scheme through two examples, which includes evolutions of the density, energy, mass (before recovery and after recovery), $\lambda_h^{n+1}$, $\gamma_h^{n+1}$ and differences in the energy identical-relation with the body force $f=0$ and $f\neq 0$, respectively.

Define the differences between two sides of the energy identical-relation in Theorem \ref{theorem3.1} as
\begin{equation*}
D_E^n=\left|E_h^{n+1}-E_h^{n}+\mu\tau\int_{\Omega}|\nabla \tilde{u}_h^{n+1}|^2\mathrm{d}\mathbf{x}-\tau\int_{\Omega}f^{n+1}\tilde{u}_h^{n+1}\mathrm{d}\mathbf{x}\right|.
\end{equation*}
Setting the time step {$\tau=0.001$,} the mesh size $h=0.05$, the finial time {$T=10$} and the domain $\Omega=\left(0, 1\right)^2$ with homogenous Dirichlet boundary conditions on $\partial\Omega$, we firstly test the case with the body force  $f=0$ and the initial data $\rho_0(\mathbf{x})=1, u_0(\mathbf{x})=(10x^2(x-1)^2y(y-1)(2y-1), -10x(x-1)(2x-1)y^2(y-1)^2)^{\top}$. It is easy to check that $u_0$ satisfies the homogenous Dirichlet boundary conditions and $\nabla\cdot u_0=0$.
The evolutions of the density, mass, energy,  $\lambda_h^{n+1}$, $\gamma_h^{n+1}$ and $D_E^n$ for different viscosities ($\mu=0.005, 0.002, 0.001$) are shown in Figure 1, from which we can see that  the  density remains positive, the mass after recovery is always conserved, the  energy is dissipative, $\lambda_h^{n+1}$ and $\gamma_h^{n+1}$ are both very close to 1. Moreover, we can see that  the differences $D_E^n$ between two sides of the energy identical-relation are  close to $0$. These suggests that the properties are preserved very well, which is consistent with the theoretical prediction deduced above.

Then, with the same  computational environment as that in the above but replacing the body force with $f=((2+x+y)\cos(t),(2+x+y)\sin(t))^{\top}$, we investigate the evolution of density, mass, energy,  $\lambda_h^{n+1}$, $\gamma_h^{n+1}$  and $D_E^n$ for various viscosities ($\mu=0.01, 0.005, 0.002$) and $T=20$. The simulations are presented in Figure 2. Similar results as the above are obtained for the numerical density and mass, which obey the properties derived in Theorem \ref{theorem3.1}. But the energy is not dissipative in this case, which forms a quasi-periodic evolution due to the periodic body force $f$.  Another observation is that, although the differences  $D_E^n$ between two sides of the energy identical-relation are also close to $0$, they almost captures the varying period of the energy, which indicates that the energy identical-relation holds, too. All of these confirms the predictions derived in Theorem \ref{theorem3.1}.

\subsection{Back-step flow}
In this {subsection}, we apply the proposed scheme to the back-step flow. With the boundary condition set in Figure 3,  taking {$\rho_0(\mathbf{x})=1, \rho|_{inflow}(\mathbf{x},t)=1$}, $u_0(\mathbf{x})=0,$ $\mu=0.01$ and $\tau=0.01$, we show the simulation results in Figures 4-6. From the results we can see that, as the time develops, the vortex appears and becomes more and more larger near the step,  which is good agreement with that in the references \cite{bib36}.

\subsection{Flow around a circular cylinder}
In this {subsection}, we apply the proposed finite element scheme to the flow around a circular cylinder in this {subsection}. The domain is defined as  $\Omega\in(0,6)\times(0,1)$ with  no-slip boundary conditions being imposed to the top and the bottom of the channel as well as the surface of the cylinder, a circle with the radius being $0.15$ centers at $(x,y)=(1,0.5)$, and the initial velocity $u_0(\mathbf{x})=0$. For the simulation parameters, we set $\mu=\frac{1}{300}, \tau=0.01$ and the inflow boundary condition is prescribed  as $u_1(\mathbf{x},t)=6y(1-y), u_2(\mathbf{x},t)=0$. While  we impose the condition $-pI+\frac{\partial u}{\partial n}=0$ on the outlet, where $I$ is the unit matrix of $2\times 2$. {First, we investigate the problem with a constant density, i.e.,}  $\rho_0(\mathbf{x})=1,  \rho|_{inflow}(\mathbf{x},t)=1$. The contour plots for the velocity components $u_1, u_2$ and the pressure $p$ are presented in Figures 7-9. At the beginning, both velocity and pressure are almost symmetric with respect to the line $y=0.5$ (when $t=3$). But as the time develops, the turbulence will appear  and get obviously  (when $t=7$) after the flow past through the circle. But their values keep symmetric with respect to the line $y=0.5$ before the circle. These are similar to that in  \cite{bib35}. {Then, we study the case with a variable density. With the same computational parameters as above but replacing the density with  $\rho_0(\mathbf{x})=1+\sin(y),  \rho|_{inflow}(\mathbf{x},t)=1+\sin(y)$, we present the velocities ($u_{1h}^n$ and $u_{2h}^n$) and the pressure in Figures 10-12. We can find that, due to the variable density, the symmetries of the velocities and the pressure are lost  from the beginning ($t=0.5$) compared with the case with a constant density. And the turbulence is very obvious at $t=2$, which is much earlier than the problem with a constant density ($t=7$). Moreover, we display the development of the density with respect to the time in Figure 13. It can be saw that, although small numerical oscillation appears due to the hyperbolic property of the density equation, the proposed scheme can capture the distribution of the variable density in all tested times.}  All of these confirm the efficiency of the proposed scheme.

\section{Conclusions}\label{sec.6}
A first order fully discrete finite element scheme which maintains mass conservation, positivity  and energy identical-relation preserving for the Navier-Stokes equations with variable density is studied in this paper. The error estimates are also proved, which are verified through some examples. {Due to the hyperbolic property of the density equation, there are  small numerical oscillation in the numerical density, which may be eliminated by using the least square finite element method \cite{bib15,WLZ2024} and the discontinuous Galerkin method \cite{bib24}.}  But there are some technique problems in the error estimate when extending this idea to {these methods and} the higher-order scheme preserving the property. Moreover, the property-preserving schemes and their error estimates for the Navier-Stokes equations with variable density  coupled with other fields, such as the electric-field (see, e.g., \cite{PHP2021,WLZ2024}) and the magnetic-field (see, e.g., \cite{ZFS2024}) are also very interesting. All of these will be considered in future.

{\section*{Acknowledgements}
We would like to thank the editor and referees for their many helpful comments
and suggestions.}

\addcontentsline{toc}{section}{References}
\bibliographystyle{plain}
\bibliography{ref.bib}

@article{bib04,
  title={A conservative adaptive projection method for the variable density incompressible {Navier--Stokes} equations},
  author={Almgren, Ann S and Bell, John B and Colella, Phillip and Howell, Louis H and Welcome, Michael L},
  journal={Journal of Computational Physics},
  volume={142},
  number={1},
  pages={1--46},
  year={1998},
  publisher={Elsevier}
}

@article{bib05,
  title={A second-order projection method for variable-density flows},
  author={Bell, John B and Marcus, Daniel L},
  journal={Journal of Computational Physics},
  volume={101},
  number={2},
  pages={334--348},
  year={1992},
  publisher={Elsevier}
}

@article{bib16,
  title={Error analysis of a fully discrete finite element method for variable density incompressible flows in two dimensions},
  author={Cai, Wentao and Li, Buyang and Li, Ying},
  journal={ESAIM: Mathematical Modelling and Numerical Analysis},
  volume={55},
  pages={S103--S147},
  year={2021},
  publisher={EDP Sciences}
}

@article{bib18,
  title={On conservation laws of {Navier--Stokes Galerkin} discretizations},
  author={Charnyi, Sergey and Heister, Timo and Olshanskii, Maxim A and Rebholz, Leo G},
  journal={Journal of Computational Physics},
  volume={337},
  pages={289--308},
  year={2017},
  publisher={Elsevier}
}

@article{bib01,
  title={Density-dependent incompressible fluids in bounded domains},
  author={Danchin, R},
  journal={Journal of Mathematical Fluid Mechanics},
  volume={8},
  number={3},
  pages={333--381},
  year={2006},
  publisher={Springer}
}

@article{plusbib39,
  title={A general linear method approach to the design and optimization of efficient, accurate, and easily implemented time-stepping methods in {CFD}},
  author={DeCaria, Victor and Gottlieb, Sigal and Grant, Zachary J and Layton, William J},
  journal={Journal of Computational Physics},
  volume={455},
  pages={110927},
  year={2022},
  publisher={Elsevier}
}

@article{bib25,
  title={A generalized high-order momentum preserving ({HOMP}) method in the one-fluid model for incompressible two phase flows with high density ratio},
  author={Desmons, Florian and Coquerelle, Mathieu},
  journal={Journal of Computational Physics},
  volume={437},
  pages={110322},
  year={2021},
  publisher={Elsevier}
}

@article{GS2011,
  title={Error analysis of a fractional time-stepping technique for incompressible flows with variable density},
  author={Guermond, J-L and Salgado, Abner J},
  journal={SIAM Journal on Numerical Analysis},
  volume={49},
  number={3},
  pages={917--944},
  year={2011},
  publisher={SIAM}
}

@article{plusbib37,
  title={Solvability of the two-dimensional stationary incompressible inhomogeneous {Navier--Stokes} equations with variable viscosity coefficient},
  author={He, Zihui and Liao, Xian},
  journal={Communications in Contemporary Mathematics},
  volume={26},
  number={07},
  pages={2350039},
  year={2024},
  publisher={World Scientific}
}

@article{hecht,
  title={New development in FreeFem++},
  author={Hecht, Fr{\'e}d{\'e}ric},
  journal={Journal of numerical mathematics},
  volume={20},
  number={3-4},
  pages={1--14},
  year={2012}
}

@article{bib32,
  title={Approximation of the global attractor for the incompressible {Navier--Stokes} equations},
  author={Hill, Adrian T and S{\"u}li, Endre},
  journal={IMA Journal of Numerical Analysis},
  volume={20},
  number={4},
  pages={633--667},
  year={2000},
  publisher={OUP}
}

@article{bib22,
  title={An exactly mass conserving space-time embedded-hybridized discontinuous {Galerkin} method for the {Navier--Stokes} equations on moving domains},
  author={Horv{\'a}th, Tam{\'a}s L and Rhebergen, Sander},
  journal={Journal of Computational Physics},
  volume={417},
  pages={109577},
  year={2020},
  publisher={Elsevier}
}

@article{bib36,
  title={Turbulent flows over a backward facing step simulated using a modified partially averaged {Navier--Stokes} model},
  author={Huang, Renfang and Luo, Xianwu and Ji, Bin and Ji, Qingfeng},
  journal={Journal of Fluids Engineering},
  volume={139},
  number={4},
  pages={044501},
  year={2017},
  publisher={American Society of Mechanical Engineers}
}

@article{HS2021,
  title={Bound/positivity preserving and energy stable scalar auxiliary variable schemes for dissipative systems: {Applications} to {Keller--Segel} and {Poisson--Nernst--Planck} equations},
  author={Huang, Fukeng and Shen, Jie},
  journal={SIAM Journal on Scientific Computing},
  volume={43},
  number={3},
  pages={A1832--A1857},
  year={2021},
  publisher={SIAM}
}

@article{bib20,
  title={An energy, momentum, and angular momentum conserving scheme for a regularization model of incompressible flow},
  author={Ingimarson, Sean},
  journal={Journal of Numerical Mathematics},
  volume={30},
  number={1},
  pages={1--22},
  year={2022},
  publisher={De Gruyter}
}

@article{bib02,
  title={Unique solvability of an initial-and boundary-value problem for viscous incompressible nonhomogeneous fluids},
  author={Ladyzhenskaya, OA and Solonnikov, VA},
  journal={Journal of Soviet Mathematics},
  volume={9},
  pages={697--749},
  year={1978},
  publisher={Springer}
}

@article{bib17,
  title={Error analysis of a unconditionally stable {BDF}2 finite element scheme for the incompressible flows with variable density},
  author={Li, Yuan and An, Rong},
  journal={Journal of Scientific Computing},
  volume={95},
  number={3},
  pages={73},
  year={2023},
  publisher={Springer}
}

@article{bib26,
  title={Temporal error analysis of a new {Euler} semi-implicit scheme for the incompressible {Navier--Stokes} equations with variable density},
  author={Li, Yuan and An, Rong},
  journal={Communications in Nonlinear Science and Numerical Simulation},
  volume={109},
  pages={106330},
  year={2022},
  publisher={Elsevier}
}

@article{bib24,
  title={A bound-preserving high order scheme for variable density incompressible {Navier-Stokes} equations},
  author={Li, Maojun and Cheng, Yongping and Shen, Jie and Zhang, Xiangxiong},
  journal={Journal of Computational Physics},
  volume={425},
  pages={109906},
  year={2021},
  publisher={Elsevier}
}

@article{LSL2022,
  title={New {SAV}-pressure correction methods for the {Navier-Stokes} equations: stability and error analysis},
  author={Li, Xiaoli and Shen, Jie and Liu, Zhengguang},
  journal={Mathematics of Computation},
  volume={91},
  number={333},
  pages={141--167},
  year={2022}
}

@article{bib11,
  title={Mixed stabilized finite element methods based on backward difference/{Adams--Bashforth} scheme for the time-dependent variable density incompressible flows},
  author={Li, Ying and Li, Jian and Mei, Liquan and Li, Yingping},
  journal={Computers \& Mathematics with Applications},
  volume={70},
  number={10},
  pages={2575--2588},
  year={2015},
  publisher={Elsevier}
}

@article{bib10,
  title={A new fractional time-stepping method for variable density incompressible flows},
  author={Li, Ying and Mei, Liquan and Ge, Jiatai and Shi, Feng},
  journal={Journal of Computational Physics},
  volume={242},
  pages={124--137},
  year={2013},
  publisher={Elsevier}
}

@article{bib13,
  title={A convergent post-processed discontinuous {Galerkin} method for incompressible flow with variable density},
  author={Li, Buyang and Qiu, Weifeng and Yang, Zongze},
  journal={Journal of Scientific Computing},
  volume={91},
  number={1},
  pages={2},
  year={2022},
  publisher={Springer}
}

@article{LSTZ2024,
  title={Efficient energy stable schemes for incompressible flows with variable density},
  author={Li, Buyang and Shen, Jie and Yang, Zongze and Zhang, Yanrong},
  journal={Journal of Computational Physics},
  volume={517},
  pages={113365},
  year={2024},
  publisher={Elsevier}
}

@article{plusbib38,
  title={Filtered time-stepping method for incompressible {Navier-Stokes} equations with variable density},
  author={Li, Ning and Wu, Jilian and Feng, Xinlong},
  journal={Journal of Computational Physics},
  volume={473},
  pages={111764},
  year={2023},
  publisher={Elsevier}
}

@article{bib12,
  title={Convergence of numerical approximations of the incompressible {Navier--Stokes} equations with variable density and viscosity},
  author={Liu, Chun and Walkington, Noel J},
  journal={SIAM Journal on Numerical Analysis},
  volume={45},
  number={3},
  pages={1287--1304},
  year={2007},
  publisher={SIAM}
}

@article{LN2024,
  title={A fully conservative and shift-invariant formulation for {Galerkin} discretizations of incompressible variable density flow},
  author={Lundgren, Lukas and Nazarov, Murtazo},
  journal={Journal of Computational Physics},
  volume={510},
  pages={113086},
  year={2024},
  publisher={Elsevier}
}

@article{plusbib40,
  title={A potential energy conserving finite element method for turbulent variable density flow: application to glacier-fjord circulation},
  author={Lundgren, Lukas and Helanow, Christian and Wiskandt, Jonathan and Koszalka, Inga Monika and Ahlkrona, Josefin},
  journal={arXiv preprint arXiv:2409.00972},
  year={2024}
}

@article{plusbib42,
  title={A high-order artificial compressibility method based on {Taylor} series time-stepping for variable density flow},
  author={Lundgren, Lukas and Nazarov, Murtazo},
  journal={Journal of Computational and Applied Mathematics},
  volume={421},
  pages={114846},
  year={2023},
  publisher={Elsevier}
}

@article{bib23,
  title={An entropy--stable discontinuous {Galerkin} approximation for the incompressible {Navier--Stokes} equations with variable density and artificial compressibility},
  author={Manzanero, Juan and Rubio, Gonzalo and Kopriva, David A and Ferrer, Esteban and Valero, Eusebio},
  journal={Journal of Computational Physics},
  volume={408},
  pages={109241},
  year={2020},
  publisher={Elsevier}
}

@article{bib19,
  title={Longer time accuracy for incompressible {Navier--Stokes} simulations with the {EMAC} formulation},
  author={Olshanskii, Maxim A and Rebholz, Leo G},
  journal={Computer Methods in Applied Mechanics and Engineering},
  volume={372},
  pages={113369},
  year={2020},
  publisher={Elsevier}
}

@article{bib03,
  title={Analysis of an iterative method for variable density incompressible fluids},
  author={Ortega-Torres, Elva and e Silva, P Braz and Rojas-Medar, Marko},
  journal={Ann. Univ. Ferrara},
  volume={55},
  number={1},
  pages={129--151},
  year={2009}
}

@article{bib21,
  title={A mass, energy, enstrophy and vorticity conserving ({MEEVC}) mimetic spectral element discretization for the {2D} incompressible {Navier--Stokes} equations},
  author={Palha, Artur and Gerritsma, Marc},
  journal={Journal of Computational Physics},
  volume={328},
  pages={200--220},
  year={2017},
  publisher={Elsevier}
}

@article{PC2024,
  title={Error analysis of energy-conservative {BDF2--FE} scheme for the {2D Navier--Stokes} equations with variable density},
  author={Pan, Jingjing and Cai, Wentao},
  journal={Communications in Nonlinear Science and Numerical Simulation},
  volume={136},
  pages={108093},
  year={2024},
  publisher={Elsevier}
}

@article{PHP2021,
  title={Energy stable finite element method for an electrohydrodynamic model with variable density},
  author={Pan, Mingyang and He, Dongdong and Pan, Kejia},
  journal={Journal of Computational Physics},
  volume={424},
  pages={109870},
  year={2021},
  publisher={Elsevier}
}

@article{bib06,
  title={A high-order projection method for tracking fluid interfaces in variable density incompressible flows},
  author={Puckett, Elbridge Gerry and Almgren, Ann S and Bell, John B and Marcus, Daniel L and Rider, William J},
  journal={Journal of Computational Physics},
  volume={130},
  number={2},
  pages={269--282},
  year={1997},
  publisher={Elsevier}
}

@article{bib15,
  title={Gauge--{Uzawa} methods for incompressible flows with variable density},
  author={Pyo, Jae-Hong and Shen, Jie},
  journal={Journal of Computational Physics},
  volume={221},
  number={1},
  pages={181--197},
  year={2007},
  publisher={Elsevier}
}

@article{Q2024,
  title={Numerical analysis of a linear second-order energy-stable auxiliary variable method for the incompressible {Navier--Stokes} equations},
  author={Qi, Longzhao},
  journal={Communications in Nonlinear Science and Numerical Simulation},
  volume={142},
  pages={108561},
  year={2025},
  publisher={Elsevier}
}

@article{plusbib41,
  title={A second-order-in-time, explicit approach addressing the redundancy in the low-{Mach}, variable-density {Navier-Stokes} equations},
  author={Reuter, Bryan W and Oliver, Todd A and Moser, Robert D},
  journal={Journal of Computational Physics},
  volume={514},
  pages={113216},
  year={2024},
  publisher={Elsevier}
}

@article{bib35,
  title={Error analysis of a fully discrete finite element variational multiscale method for time-dependent incompressible {Navier--Stokes} equations},
  author={Shang, Yueqiang},
  journal={Numerical Methods for Partial Differential Equations},
  volume={29},
  number={6},
  pages={2025--2046},
  year={2013},
  publisher={Wiley Online Library}
}

@book{bib30,
  title={Navier-Stokes Equations: Theory and Numerical Analysis},
  author={Roger Temam},
  edition={Third Edition},
  publisher={North-Holland},
  year={1984},
  series={Studies in Mathematics and its Applications},
  volume={2},
  isbn= {0-444-87558-1}
}

@article{bib27,
  title={Optimal error estimate of unconditionally positivity-preserving, mass-conserving and energy stable method for the {Keller-Segel} chemotaxis model},
  author={Wang, Kun and Liu, Enlong and Feng, Xinlong},
  journal={Mathematics of Computation},
  year={2024}
}

@article{WLZ2024,
  title={Decoupled, positivity-preserving and unconditionally energy stable schemes for the electrohydrodynamic flow with variable density},
  author={Wang, Kun and Liu, Enlong and Zheng, Haibiao},
  journal={Journal of Scientific Computing},
  volume={101},
  number={2},
  pages={52},
  year={2024},
  publisher={Springer}
}

@article{bib28,
  title={Mass, momentum and energy identical-relation-preserving scheme for the {Navier--Stokes} equations with variable density},
  author={Zhang, Yuhao and Dong, Haiyun and Wang, Kun},
  journal={Computers \& Mathematics with Applications},
  volume={137},
  pages={73--92},
  year={2023},
  publisher={Elsevier}
}

@article{ZFS2024,
  title={Fully decoupled, linear and unconditionally energy stable time discretization scheme for solving the magneto-hydrodynamic equations},
  author={Zhang, Guo-Dong and He, Xiaoming and Yang, Xiaofeng},
  journal={Journal of Computational and Applied Mathematics},
  volume={369},
  pages={112636},
  year={2020},
  publisher={Elsevier}
}

@article{bib31,
  title={Stability and convergence analysis of a decoupled algorithm for a fluid-fluid interaction problem},
  author={Zhang, Yuhong and Hou, Yanren and Shan, Li},
  journal={SIAM Journal on Numerical Analysis},
  volume={54},
  number={5},
  pages={2833--2867},
  year={2016},
  publisher={SIAM}
}
\newpage

\section*{Statements and Declarations}
\textbf{Funding}
The corresponding author is partially supported by the Natural Science Foundation of Chongqing (No. CSTB2024NSCQ-MSX0221) and  the third author is partially supported by the Natural Science Foundation of China (Nos. 12571417, 12271082, 62231016).\\
\\
\textbf{Competing Interests}
The authors have no relevant financial or non-financial interests to disclose.\\
\\
\textbf{Author Contributions}
Fan Yang: Implementation, Numerical analysis, Writing,  Editing. Haiyun Dong: Method, Editing.
Maojun Li: Implementation, Editing.  Kun Wang:  Method, Numerical analysis, Writing, Editing.\\
\\
\textbf{Data Availability}
Enquiries about data availability should be directed to
the authors.

\newpage
\begin{table}\label{t1}
	\caption{Convergence orders of the proposed scheme.}
	\centering
	\begin{tabular}{ccccccc}
		\hline
		$\tau=h^2$ &$||u-u_h^N||_{L^2}$ & Order&  $||\rho-\rho_h^N||_{L^2}$ & Order&$||p-p_h^N||_{L^2}$ & Order   \\
		\hline
		$1/8$&2.7203e-2&--  &4.9852e-2&--   &4.9851e-2&--   \\
		$1/16$&1.2849e-2&1.0821&2.8868e-2&0.7882&3.2805e-2&0.6037\\
		$1/32$&6.1064e-3&1.0733&1.3717e-2&1.0735&1.7208e-2&0.9309\\
		$1/64$&2.9529e-3&1.0482&7.1024e-3&0.9496&8.7426e-3&0.9769\\
		$1/128$&1.4414e-3&1.0346&3.5811e-3&0.9879&4.6256e-3&0.9184\\\hline
	\end{tabular}	
\end{table}

\begin{table}\label{t2}
	\caption{Convergence orders of the recovery factors.}
	\centering
	\begin{tabular}[b]{ccccc}
		\hline
		$\tau=h^2$ & $|1-\lambda_h^N|$ & Order&  $|1-\gamma_h^N|$ & Order \\
		\hline
		$1/8$&1.2613e-3&--  &1.3977e-1&-- \\
		$1/16$&6.0375e-4&1.0628&6.5406e-2&1.0956\\
		$1/32$&2.8595e-4&1.0782&3.1613e-2&1.0489\\
		$1/64$&1.5208e-4&0.9109&1.5539e-2&1.0246\\
		$1/128$&8.2399e-5&0.8841&7.0300e-3&1.0124\\\hline	
	\end{tabular}	
\end{table}

\begin{figure}
	\centering
	\begin{subfigure}{0.4\textwidth}
		\includegraphics[width=1\textwidth]{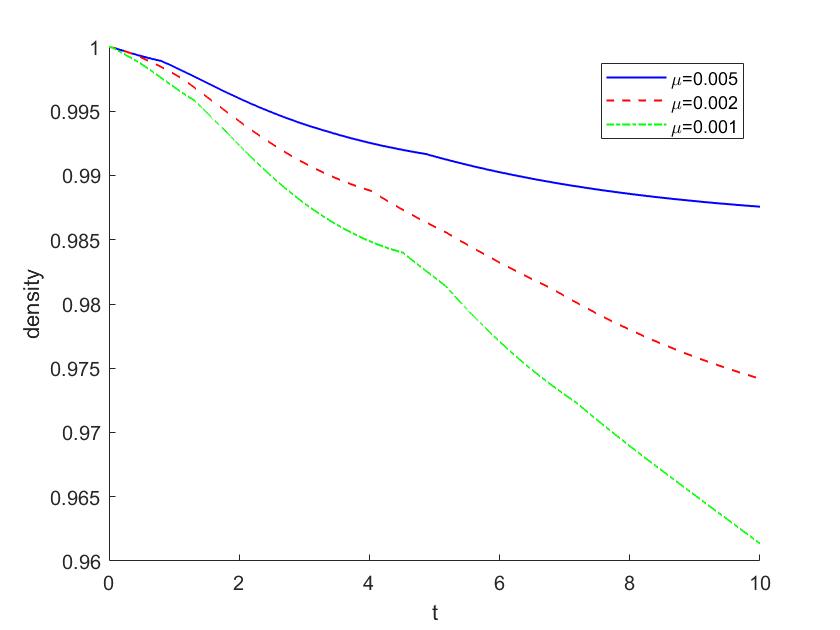}
		\caption{Minimum of $\rho_h^{n+1}$}
	\end{subfigure}
	\begin{subfigure}{0.4\textwidth}
		\includegraphics[width=1\textwidth]{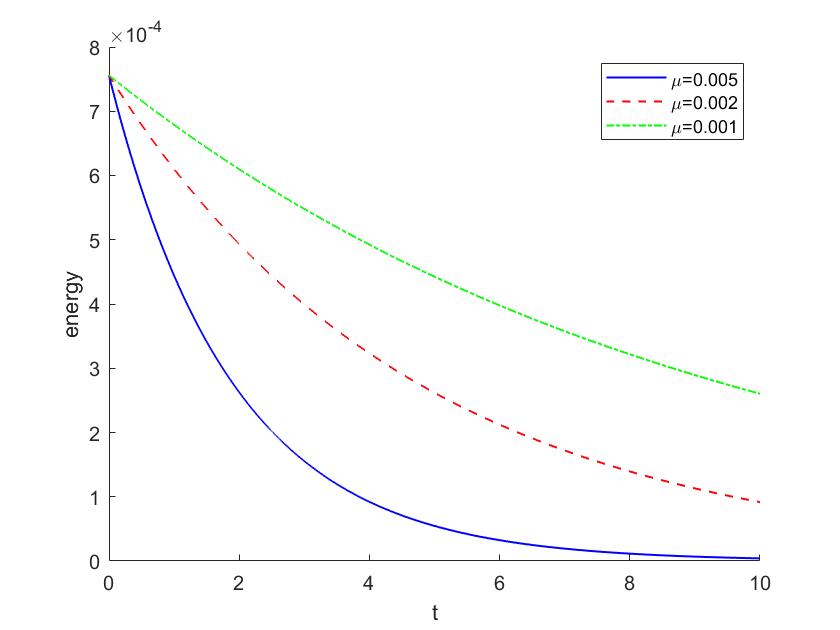}
		\caption{Evolution of  energy $E_h^{n+1}$}
	\end{subfigure}
	\begin{subfigure}{0.4\textwidth}
		\includegraphics[width=1\textwidth]{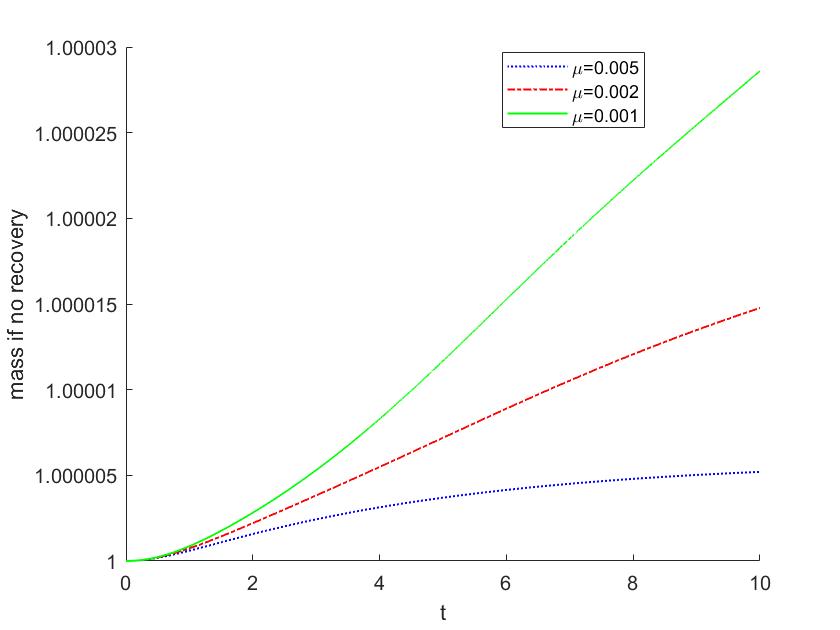}
		\caption{{Evolution of mass without recovery}}
	\end{subfigure}
	\begin{subfigure}{0.4\textwidth}
		\includegraphics[width=1\textwidth]{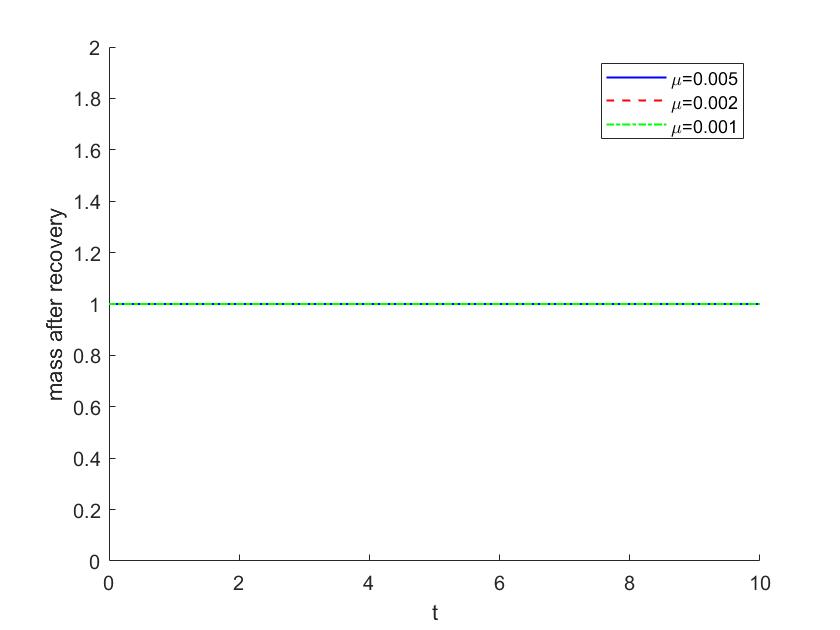}
		\caption{Evolution of mass with recovery}
	\end{subfigure}
	\begin{subfigure}{0.4\textwidth}
		\includegraphics[width=1\textwidth]{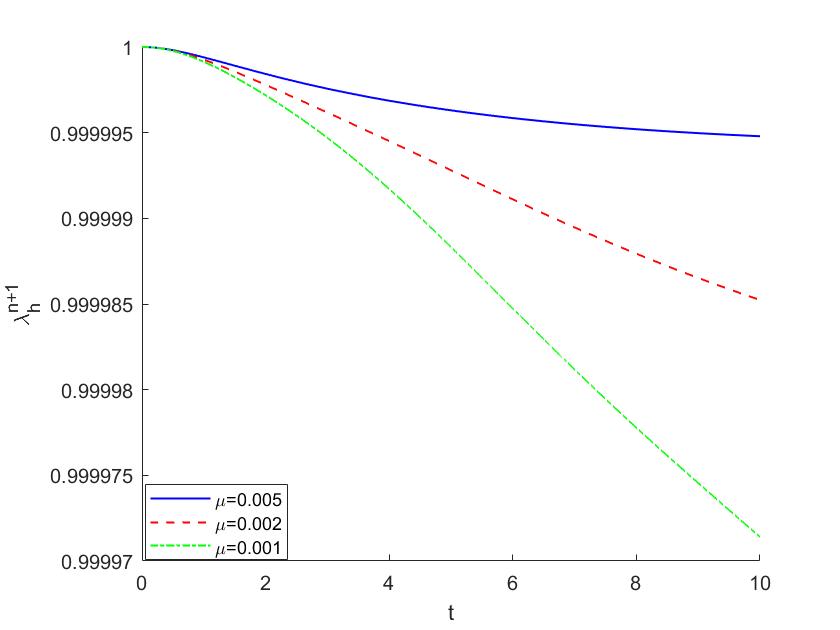}
		\caption{{Evolution of $\lambda_h^{n+1}$}}
	\end{subfigure}
	\begin{subfigure}{0.4\textwidth}
		\includegraphics[width=1\textwidth]{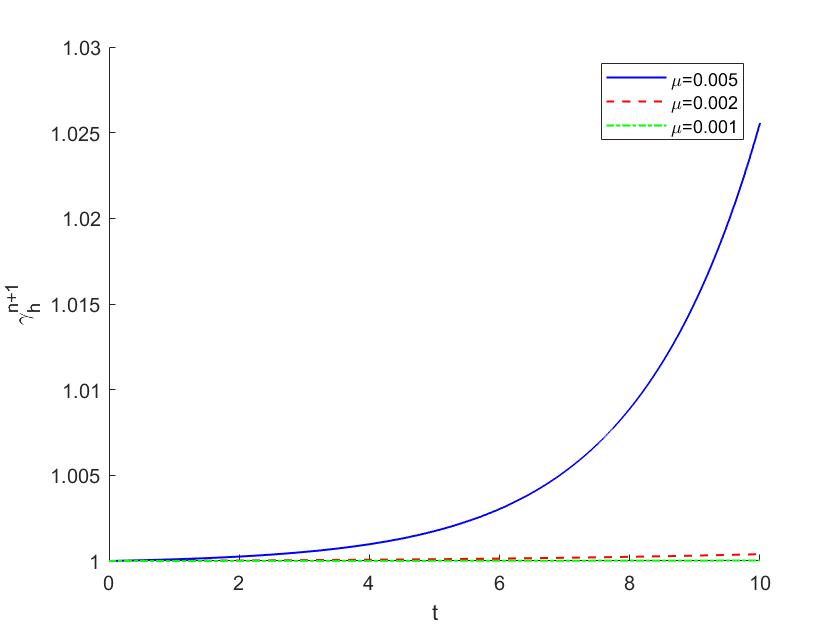}
		\caption{{Evolution of $\gamma_h^{n+1}$}}
	\end{subfigure}
	\begin{subfigure}{0.4\textwidth}
		\includegraphics[width=1\textwidth]{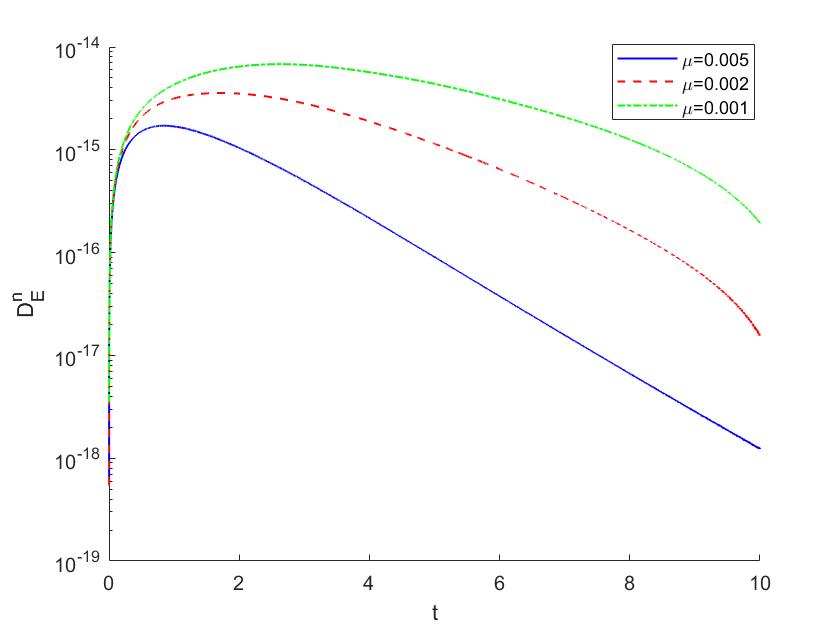}
		\caption{Evolution of $D_E^n$}
	\end{subfigure}
	\caption{Evolutions of the density, energy, {mass without recovery, mass with recovery, $\lambda_h^{n+1}$, $\gamma_h^{n+1}$} and $D_E^n$ with $f=0$.}
\end{figure}

\begin{figure}
	\centering
	\begin{subfigure}{0.4\textwidth}
		\includegraphics[width=1\textwidth]{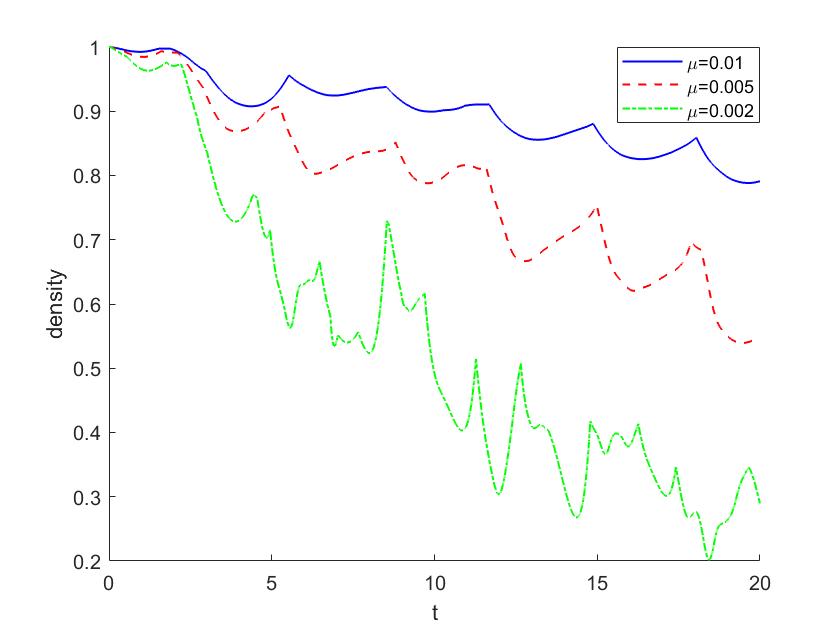}\quad
		\caption{Minimum of $\rho_h^{n+1}$}
	\end{subfigure}
	\begin{subfigure}{0.4\textwidth}
		\includegraphics[width=1\textwidth]{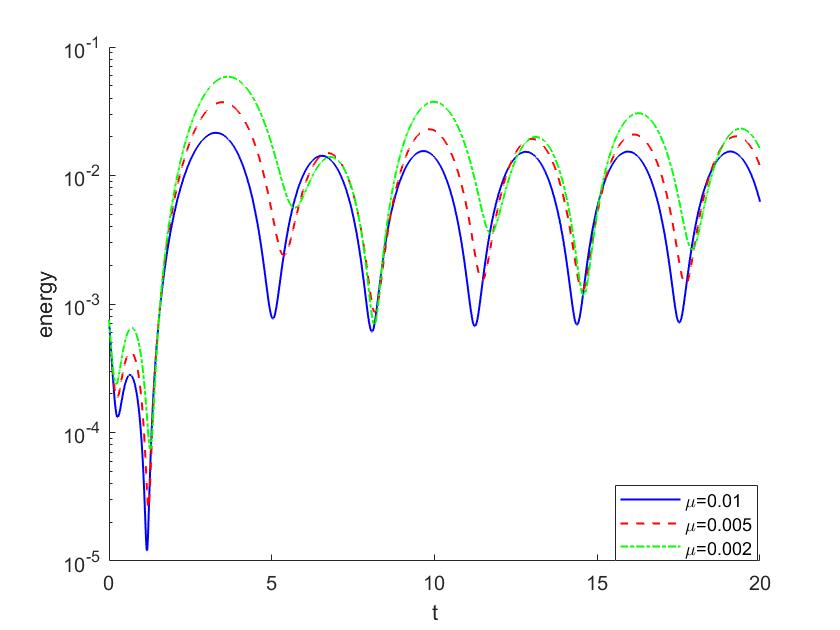}
		\caption{Evolution of energy $E_h^{n+1}$}
	\end{subfigure}
	\begin{subfigure}{0.4\textwidth}
		\includegraphics[width=1\textwidth]{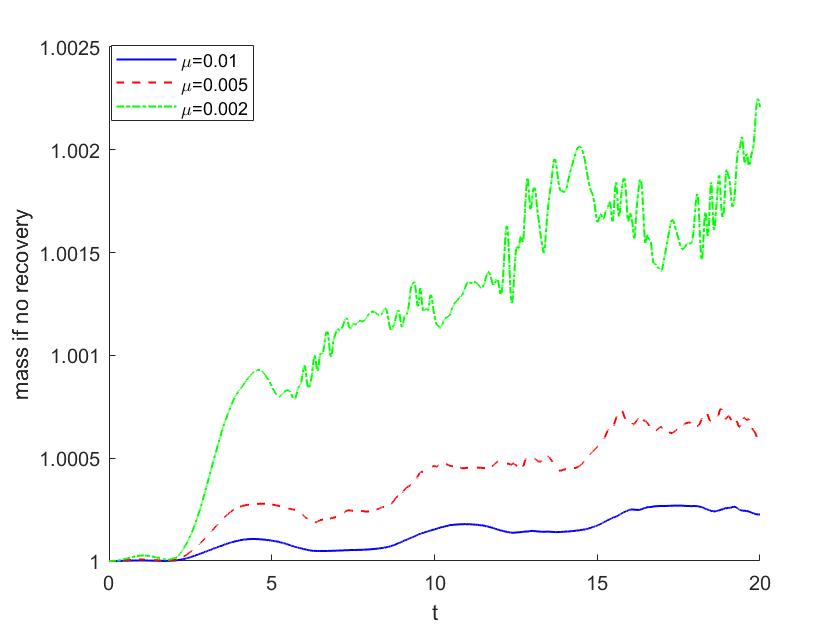}
		\caption{{Evolution of mass without recovery}}
	\end{subfigure}
	\begin{subfigure}{0.4\textwidth}
		\includegraphics[width=1\textwidth]{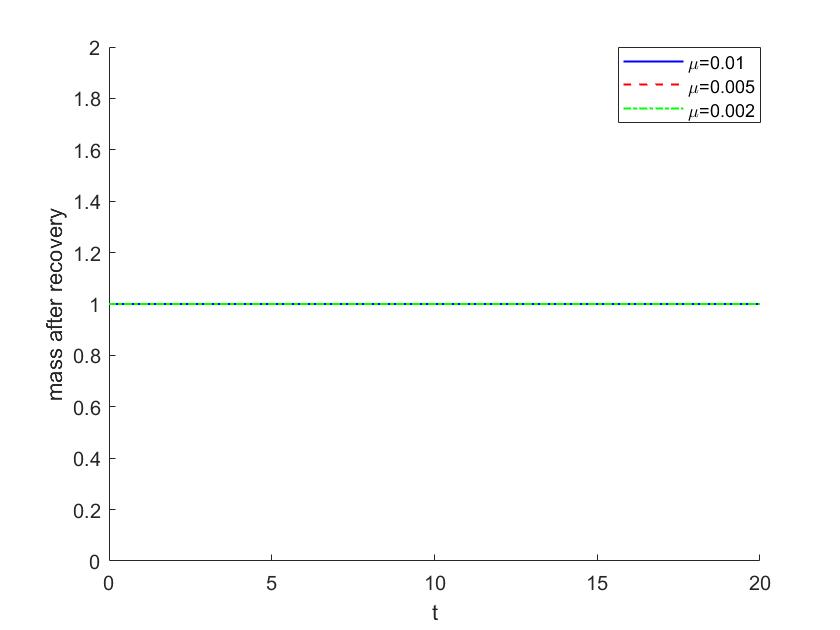}
		\caption{Evolution of mass with recovery}
	\end{subfigure}
	\begin{subfigure}{0.4\textwidth}
		\includegraphics[width=1\textwidth]{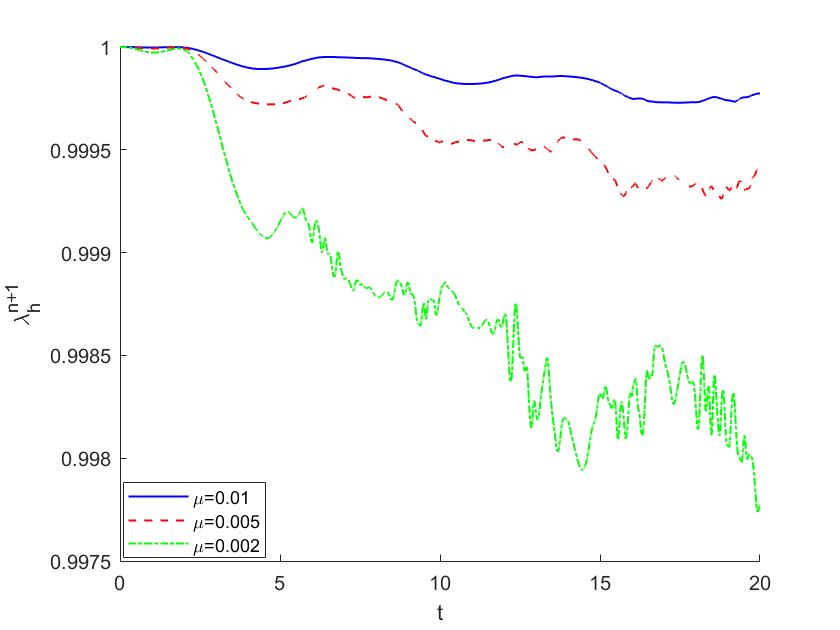}\quad
		\caption{{Evolution of $\lambda_h^{n+1}$}}
	\end{subfigure}
	\begin{subfigure}{0.4\textwidth}
		\includegraphics[width=1\textwidth]{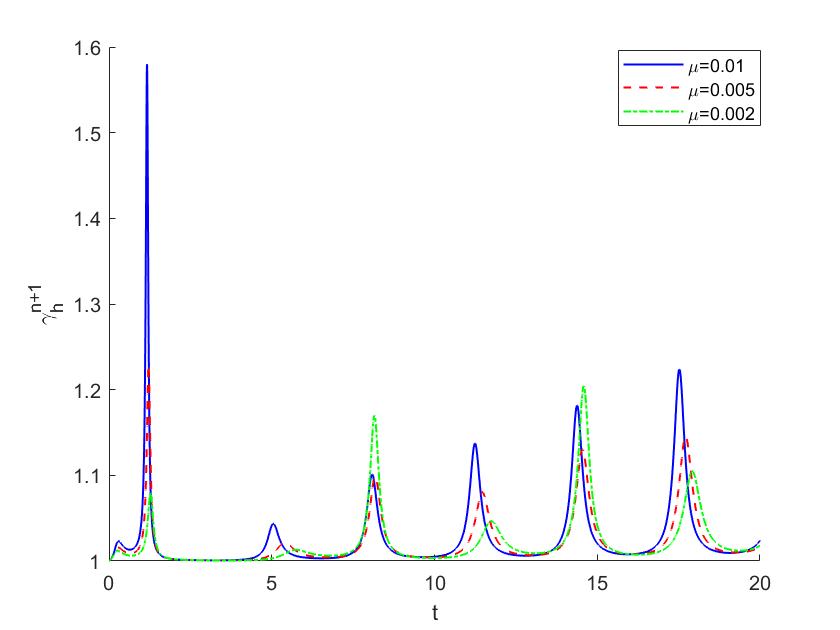}
		\caption{{Evolution of $\gamma_h^{n+1}$}}
	\end{subfigure}
	\begin{subfigure}{0.4\textwidth}
		\includegraphics[width=1\textwidth]{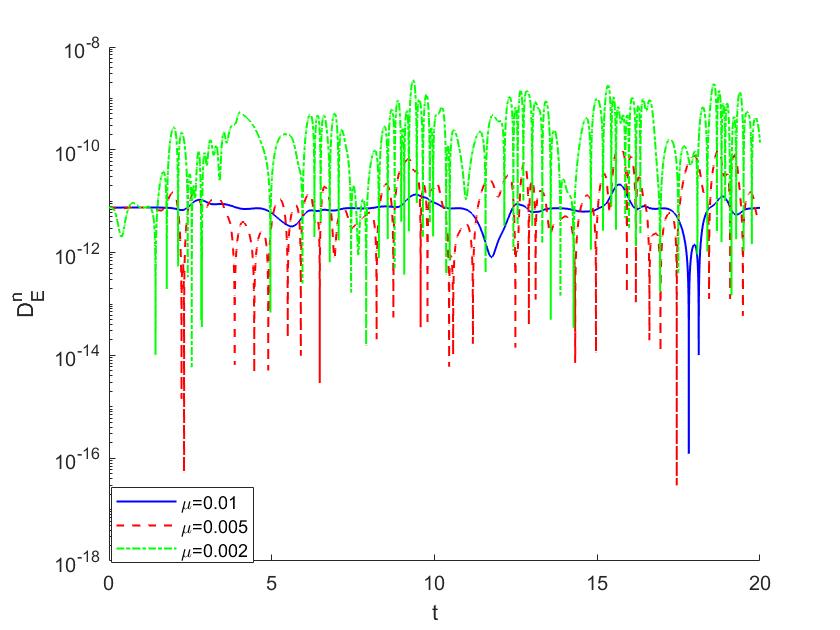}
		\caption{Evolution of $D_E^n$}
	\end{subfigure}
	\caption{Evolutions of the density, energy, {mass without recovery, mass  with recovery, $\lambda_h^{n+1}$, $\gamma_h^{n+1}$} and $D_E^n$ with $f\neq 0$.}
\end{figure}

\begin{figure}
	\centering\includegraphics[width=12cm,height=3cm]{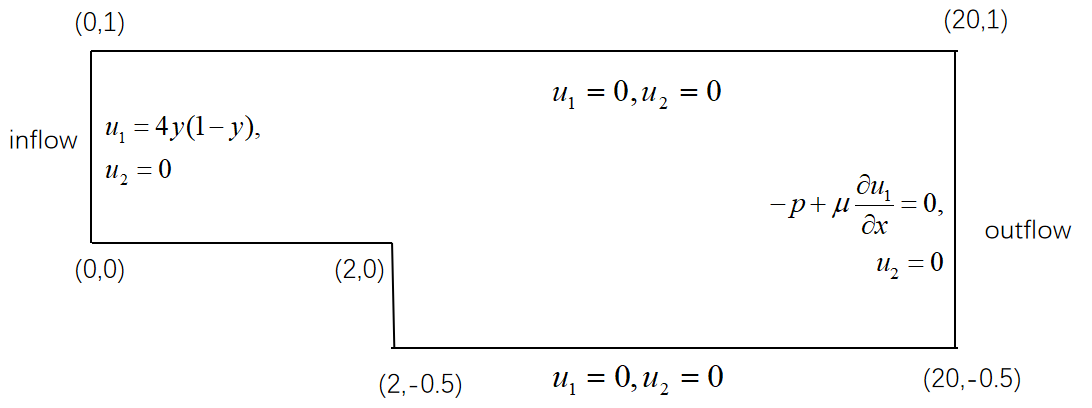}
	\caption{Analytical regions and boundary conditions.}
	\label{Analytical regions and boundary conditions}
	
	\begin{minipage}[c]{10cm}
		\includegraphics[width=10cm,height=1.95cm]{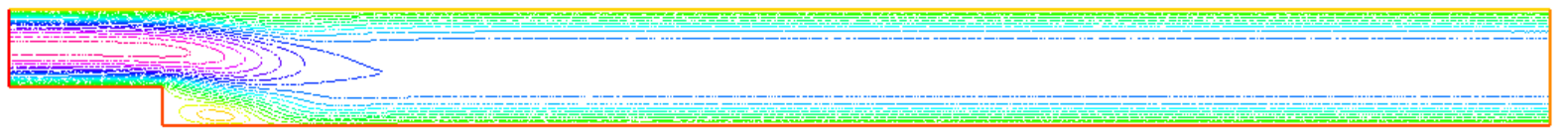}
	\end{minipage}
	\begin{minipage}[c]{10cm}
		\includegraphics[width=10cm,height=1.95cm]{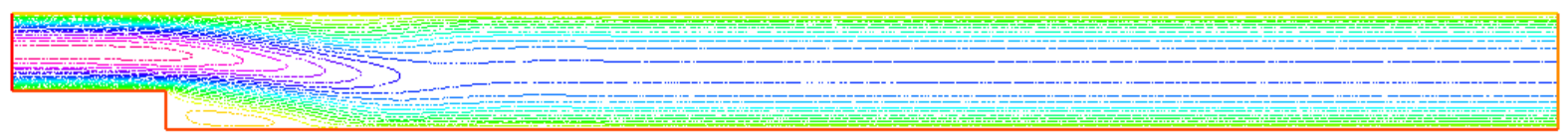}
	\end{minipage}
	\begin{minipage}[c]{10cm}
		\includegraphics[width=10cm,height=1.95cm]{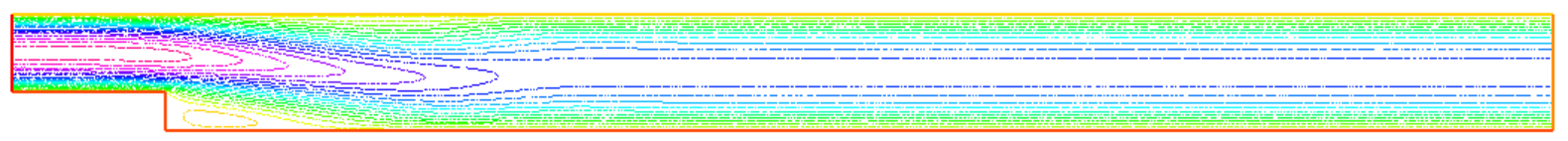}
	\end{minipage}
	\caption{Velocity $u_{1h}^{n}$ of the back-step flow  at $t=3$ (top),  $t=5$ (middle), $t=7$ (bottom).}
	
	\begin{minipage}[c]{10cm}
		\includegraphics[width=10cm,height=1.95cm]{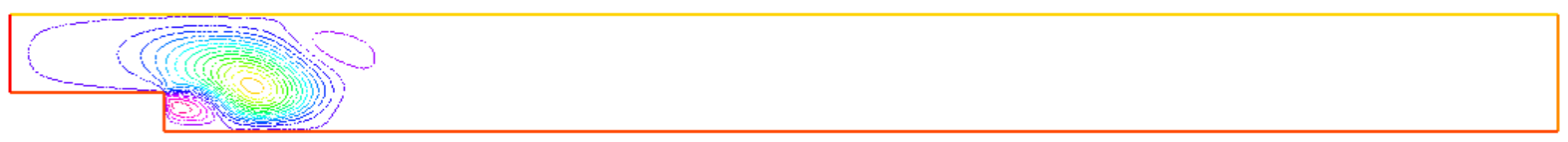}
	\end{minipage}
	\begin{minipage}[c]{10cm}
		\includegraphics[width=10cm,height=1.95cm]{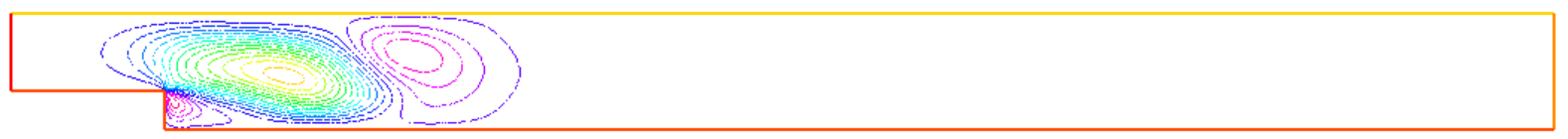}
	\end{minipage}
	\begin{minipage}[c]{10cm}
		\includegraphics[width=10cm,height=1.95cm]{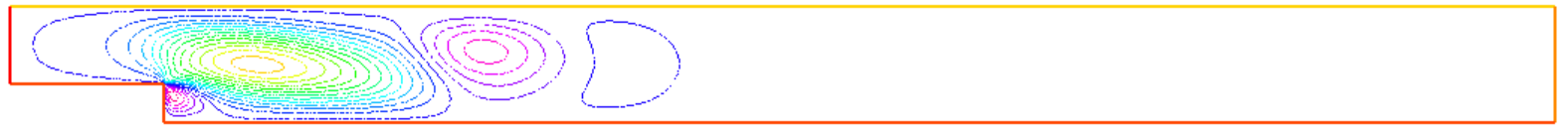}
	\end{minipage}
	\caption{Velocity $u_{2h}^n$ of the back-step flow  at $t=3$ (top),  $t=5$ (middle),  $t=7$ (bottom).}
	
	\begin{minipage}[c]{10cm}
		\includegraphics[width=10cm,height=1.95cm]{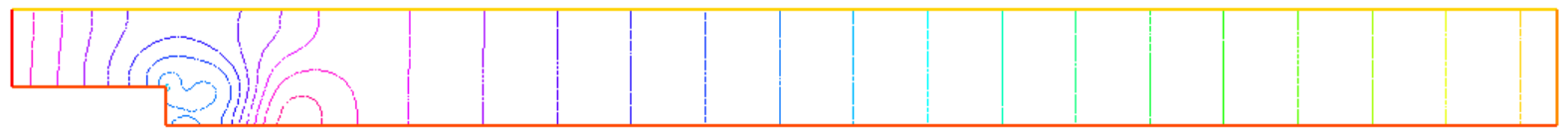}
	\end{minipage}
	\begin{minipage}[c]{10cm}
		\includegraphics[width=10cm,height=1.95cm]{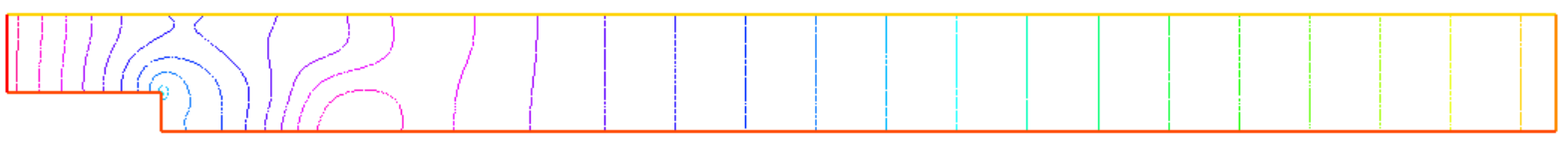}
	\end{minipage}
	\begin{minipage}[c]{10cm}
		\includegraphics[width=10cm,height=1.95cm]{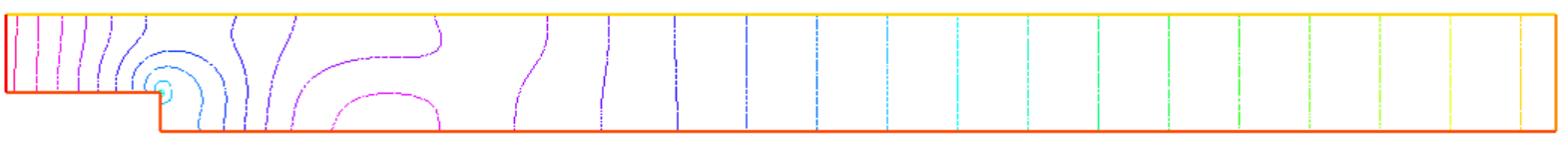}
	\end{minipage}
	\caption{Pressure $p_h^n$ of the back-step flow  at $t=3$ (top),  $t=5$ (middle),  $t=7$ (bottom).}
\end{figure}

\begin{figure}
	\centering
	\begin{minipage}[c]{10cm}
		\includegraphics[width=10cm,height=1.95cm]{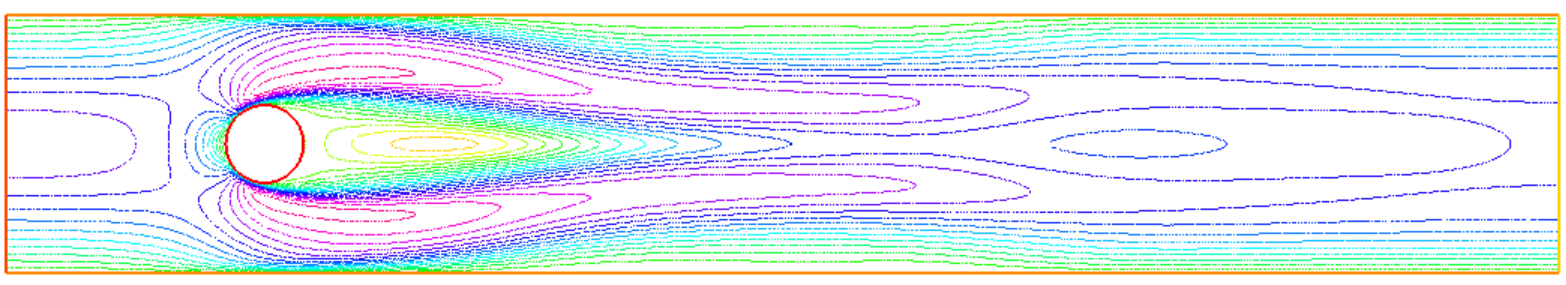}
	\end{minipage}
	\begin{minipage}[c]{10cm}
		\includegraphics[width=10cm,height=1.95cm]{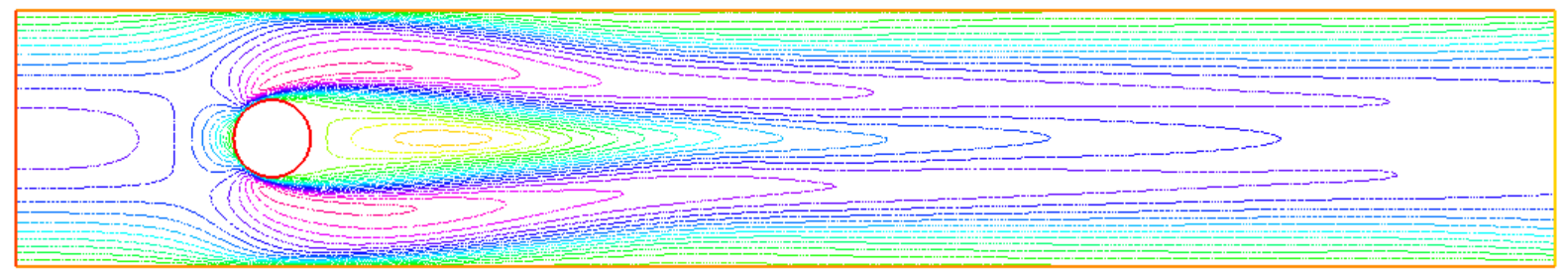}
	\end{minipage}
	\begin{minipage}[c]{10cm}
		\includegraphics[width=10cm,height=1.95cm]{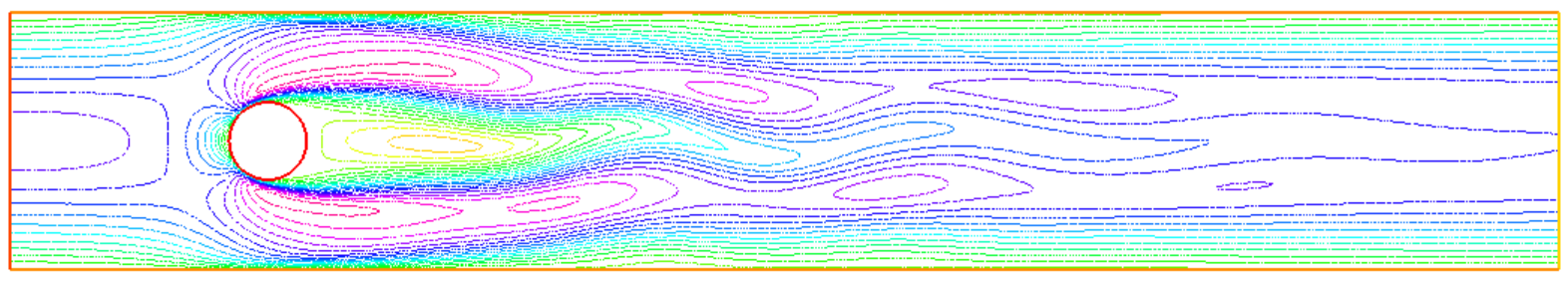}
	\end{minipage}
	\caption{Velocity $u_{1h}^n$ of the cylinder flow {with a constant density} at $t=3$ (top),  $t=5$ (middle),  $t=7$ (bottom).}
	
	\begin{minipage}[c]{10cm}
		\includegraphics[width=10cm,height=1.95cm]{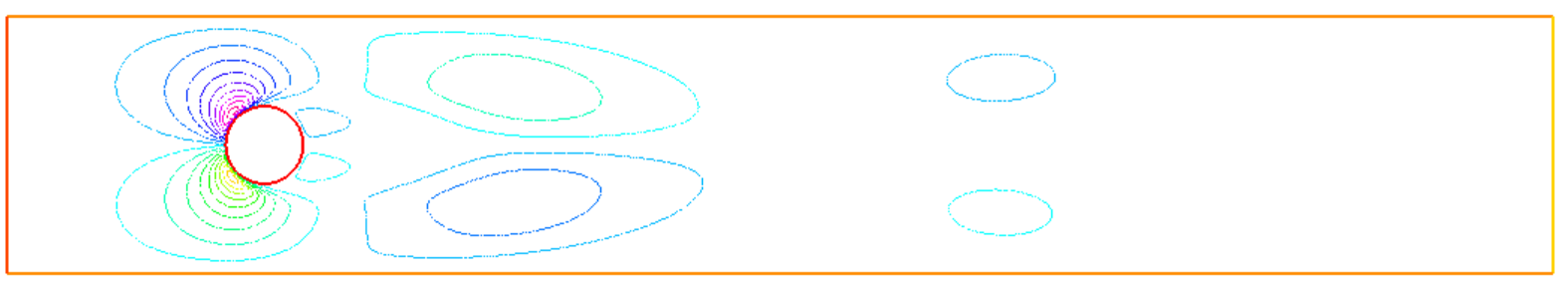}
	\end{minipage}
	\begin{minipage}[c]{10cm}
		\includegraphics[width=10cm,height=1.95cm]{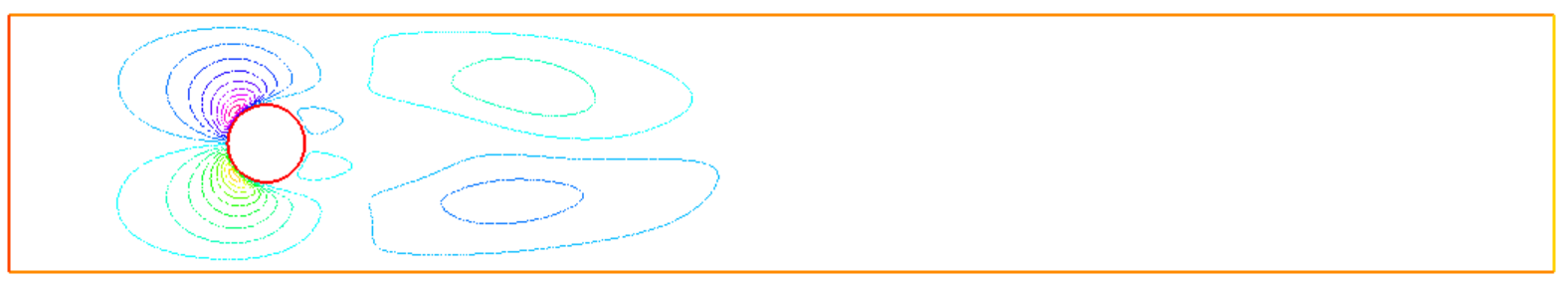}
	\end{minipage}
	\begin{minipage}[c]{10cm}
		\includegraphics[width=10cm,height=1.95cm]{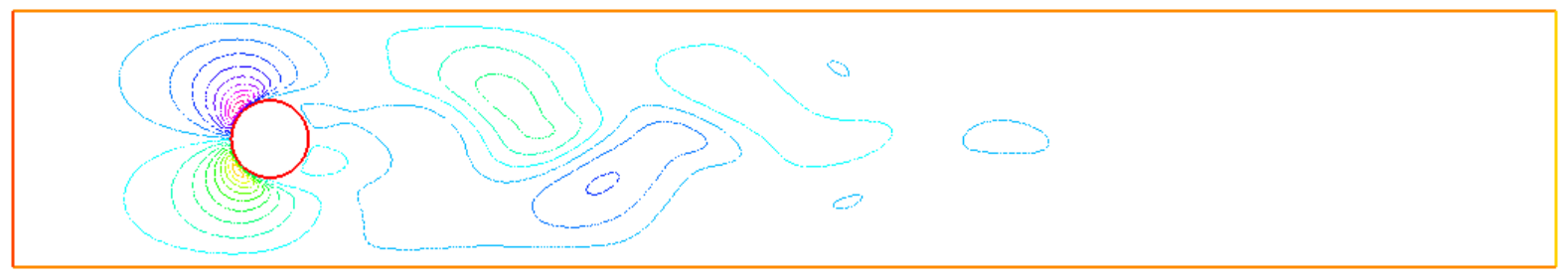}
	\end{minipage}
	\caption{Velocity $u_{2h}^n$ of the cylinder flow  {with a constant density} at $t=3$ (top),  $t=5$ (middle),  $t=7$ (bottom).}
	
	\begin{minipage}[c]{10cm}
		\includegraphics[width=10cm,height=1.95cm]{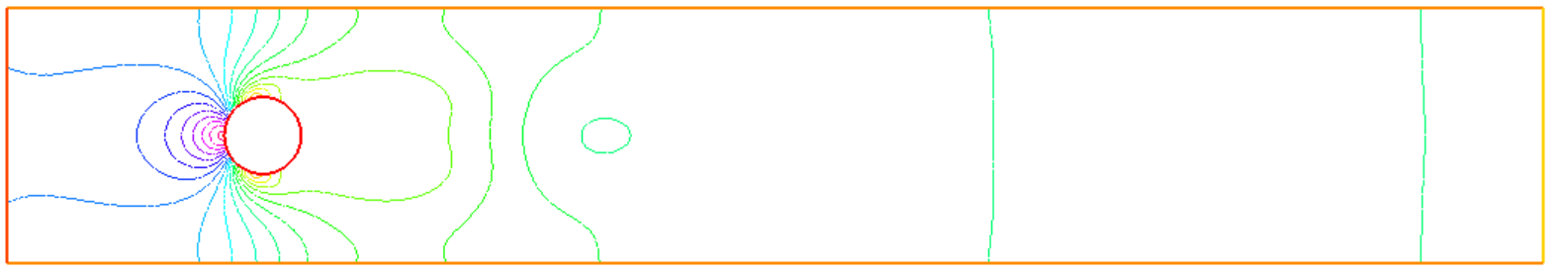}
	\end{minipage}
	\begin{minipage}[c]{10cm}
		\includegraphics[width=10cm,height=1.95cm]{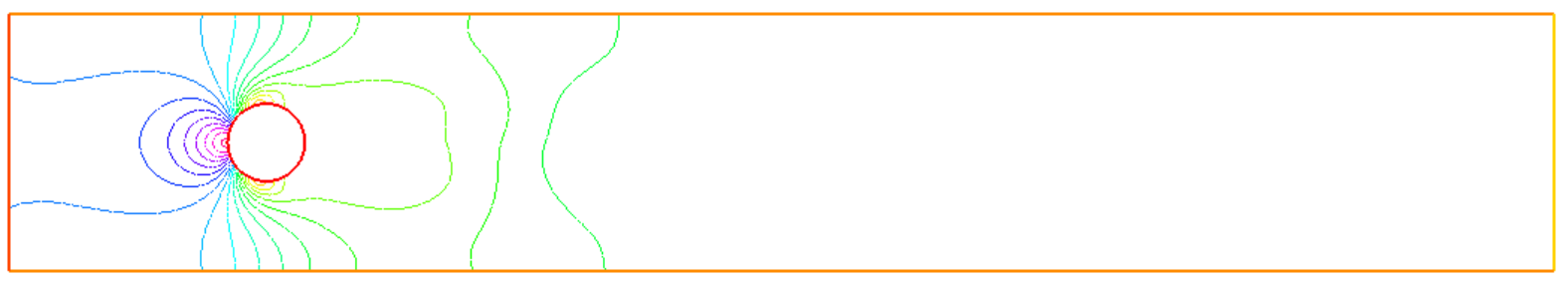}
	\end{minipage}
	\begin{minipage}[c]{10cm}
		\includegraphics[width=10cm,height=1.95cm]{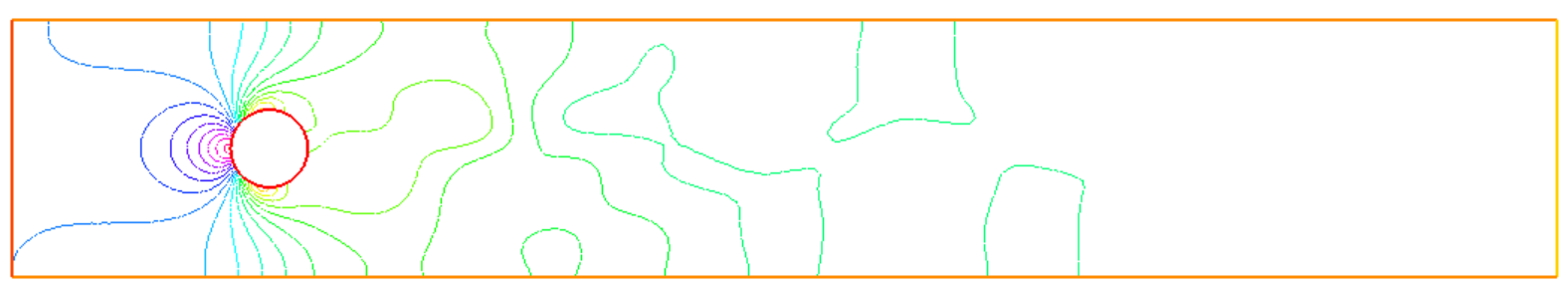}
	\end{minipage}
	\caption{Pressure $p_h^n$ of the cylinder flow {with a constant density} at $t=3$ (top),  $t=5$ (middle),  $t=7$ (bottom).}
\end{figure}

\begin{figure}
	\centering
	\begin{minipage}[c]{10cm}
		\vspace{-5cm}\includegraphics[width=12cm,height=14cm]{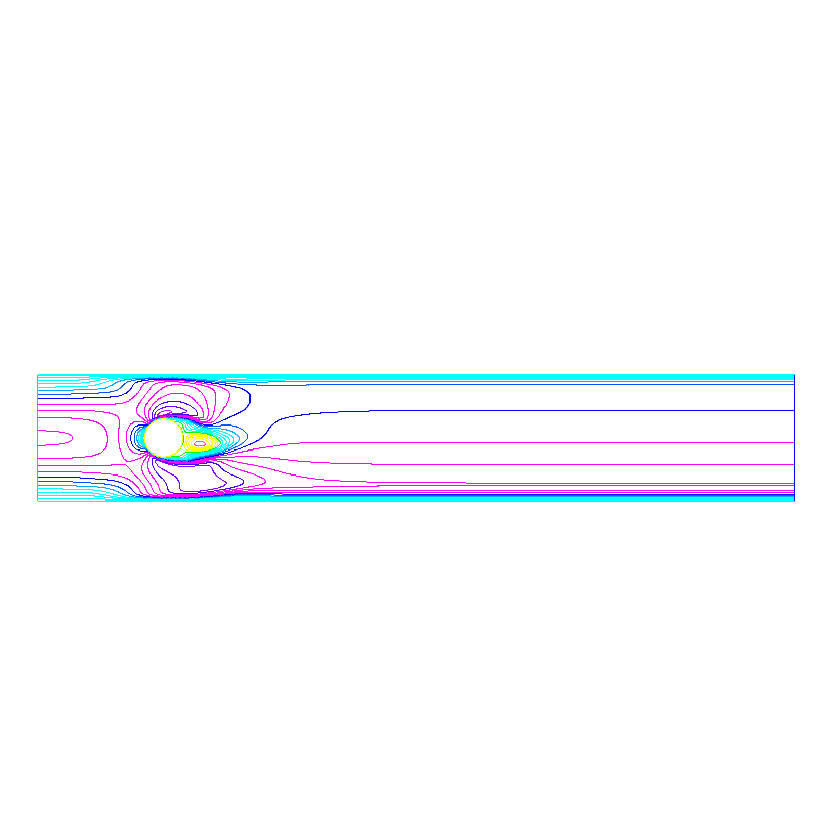}
	\end{minipage}
	\begin{minipage}[c]{10cm}
		\vspace{-11cm}\includegraphics[width=12cm,height=14cm]{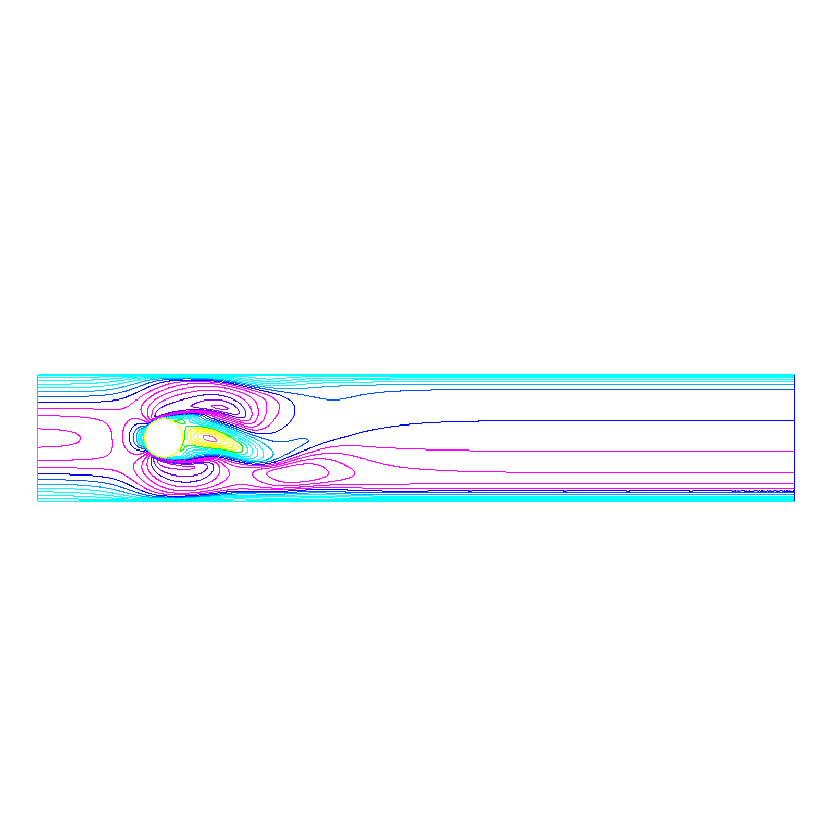}
	\end{minipage}
	\begin{minipage}[c]{10cm}
		\vspace{-11cm}\includegraphics[width=12cm,height=14cm]{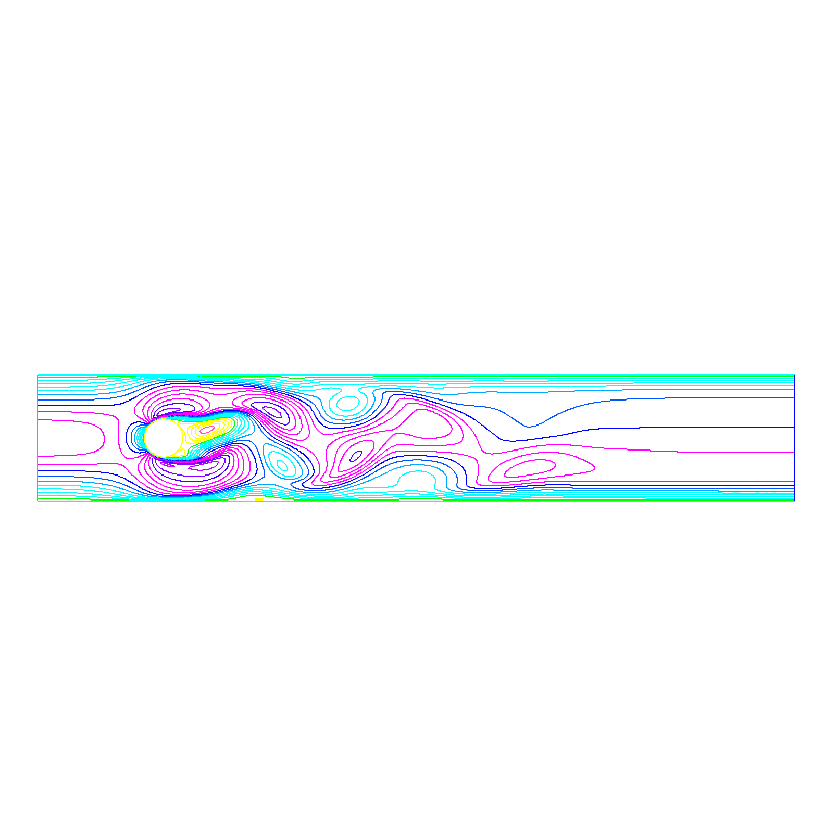}
	\end{minipage}
	\begin{minipage}[c]{10cm}
		\vspace{-11cm}\includegraphics[width=12cm,height=14cm]{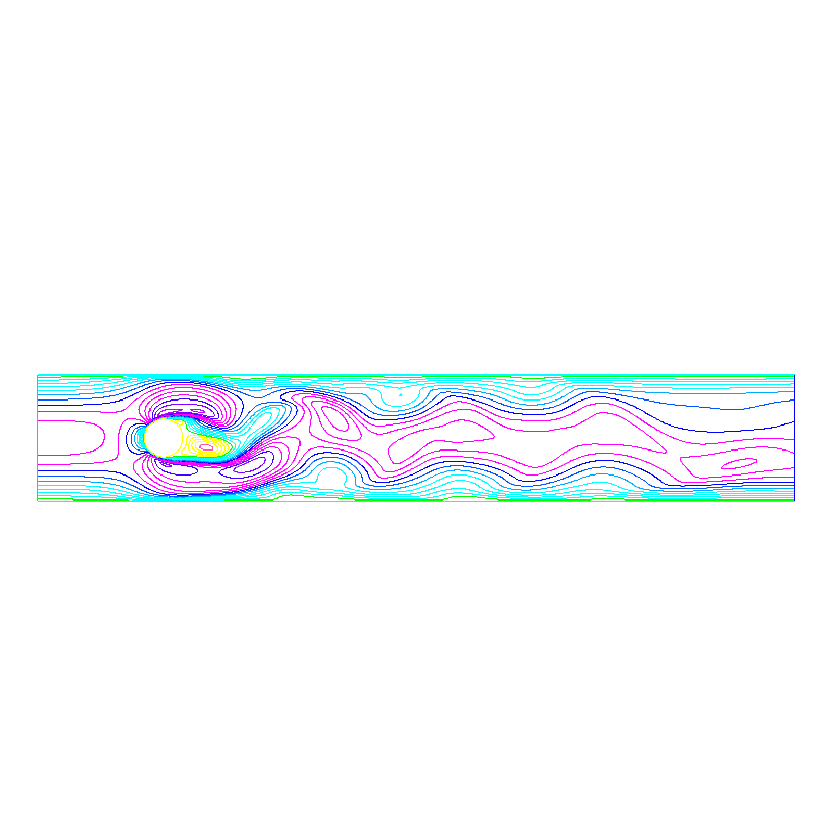}
	\end{minipage}
	\begin{minipage}[c]{10cm}
		\vspace{-11cm}\includegraphics[width=12cm,height=14cm]{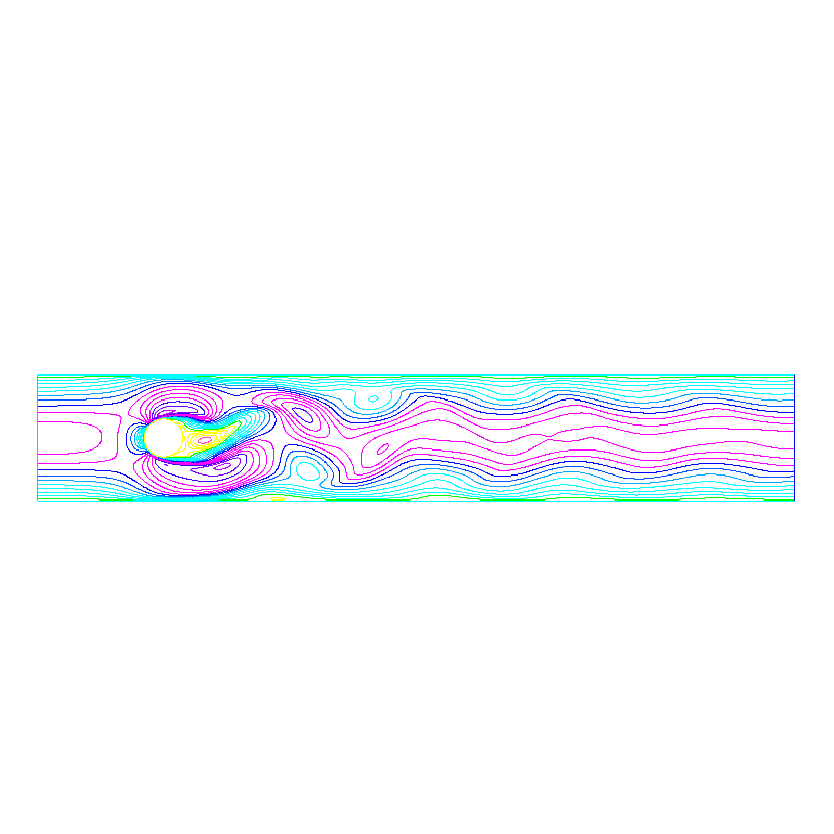}
	\end{minipage}
	\begin{minipage}[c]{10cm}
		\vspace{-11cm}\includegraphics[width=12cm,height=14cm]{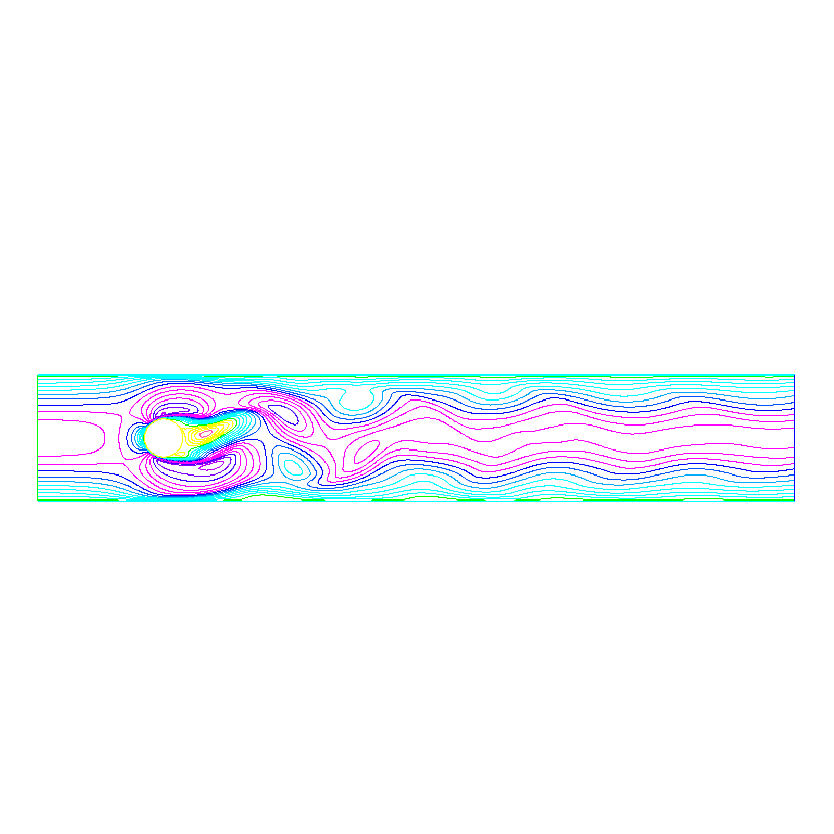}
	\end{minipage}
	\vspace{-5cm}\caption{{Velocity $u_{1h}^n$ of the cylinder flow with a variable density at $t=0.5,~1,~2,~3,~5$ and $7$ (from top to bottom).}}
\end{figure}

\begin{figure}
	\centering
	\begin{minipage}[c]{10cm}
		\vspace{-5cm}\includegraphics[width=12cm,height=14cm]{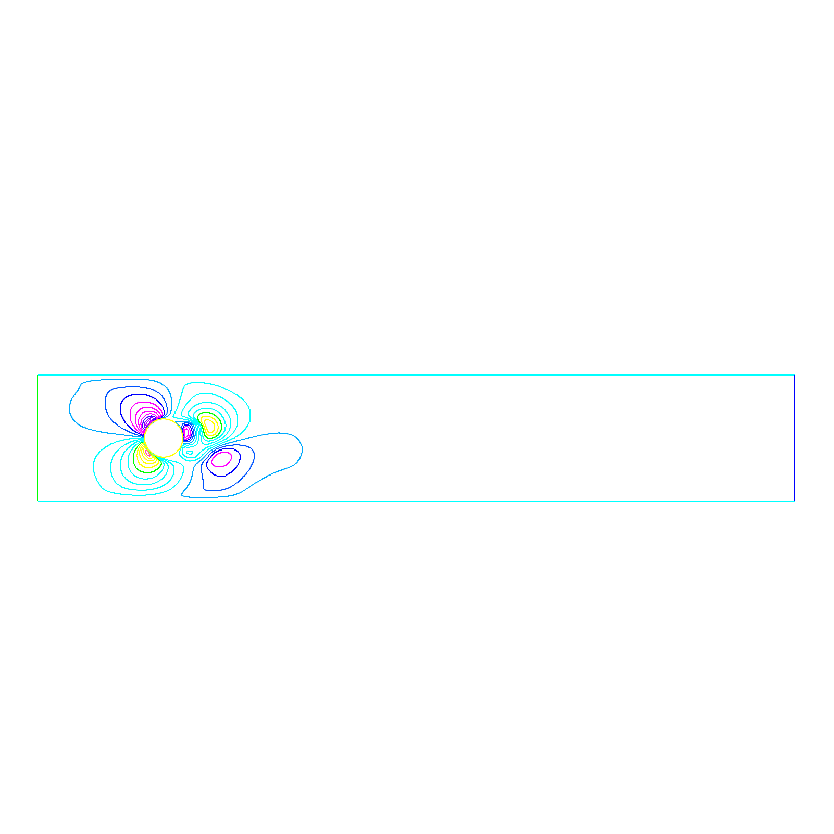}
	\end{minipage}
	\begin{minipage}[c]{10cm}
		\vspace{-11cm}\includegraphics[width=12cm,height=14cm]{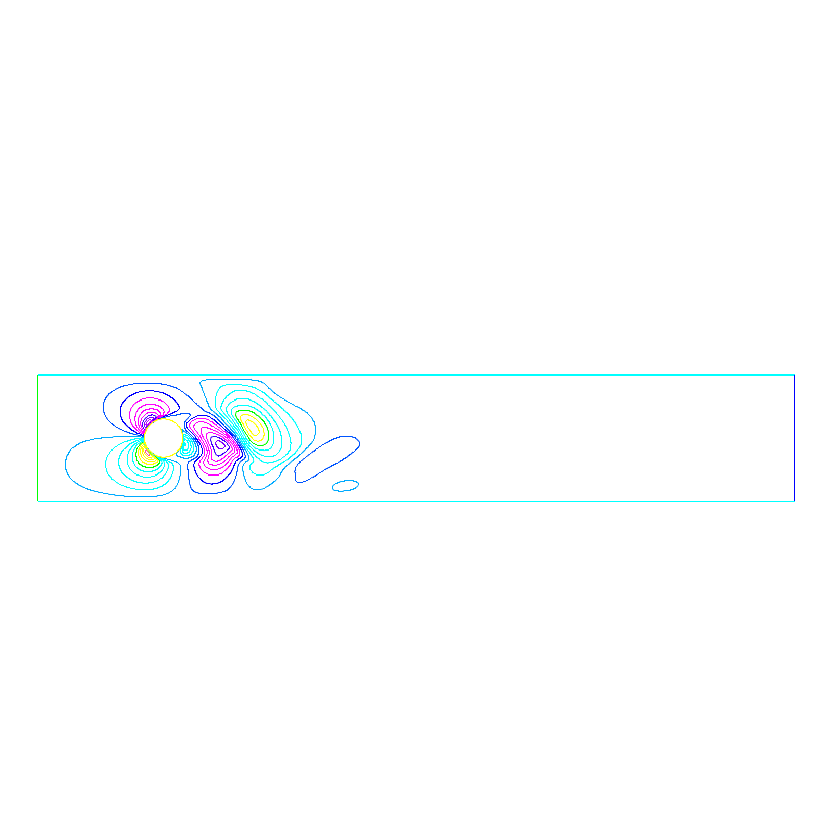}
	\end{minipage}
	\begin{minipage}[c]{10cm}
		\vspace{-11cm}\includegraphics[width=12cm,height=14cm]{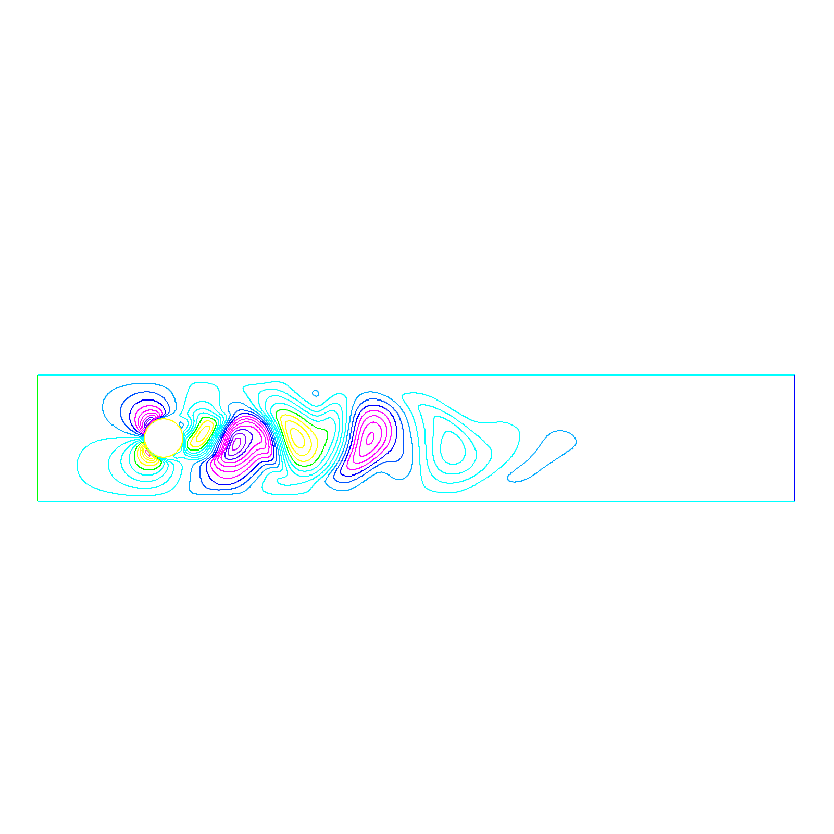}
	\end{minipage}
	\begin{minipage}[c]{10cm}
		\vspace{-11cm}\includegraphics[width=12cm,height=14cm]{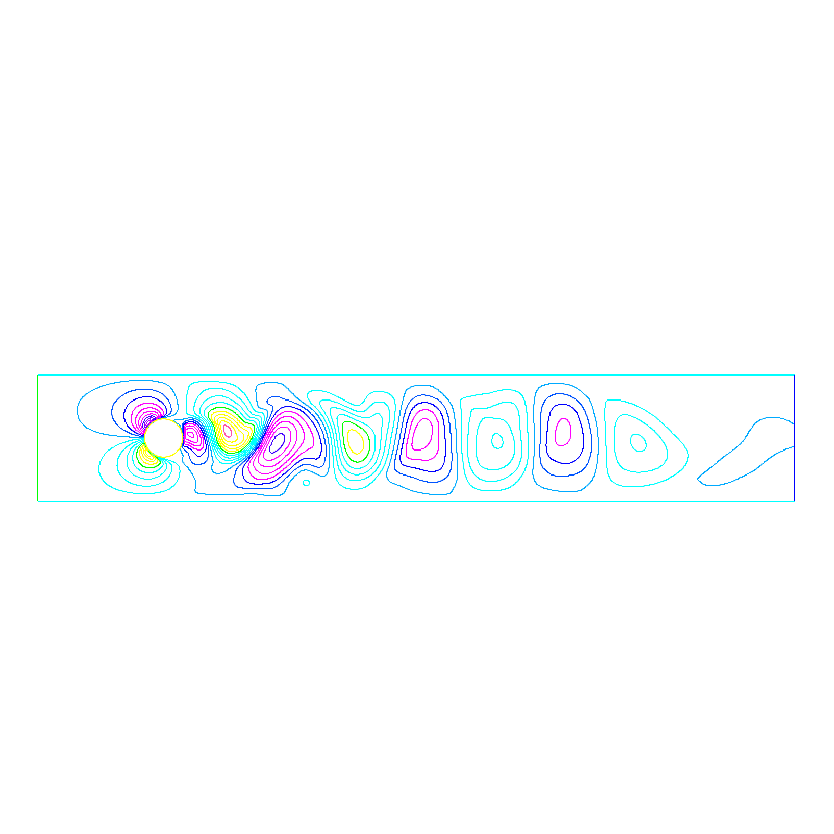}
	\end{minipage}
	\begin{minipage}[c]{10cm}
		\vspace{-11cm}\includegraphics[width=12cm,height=14cm]{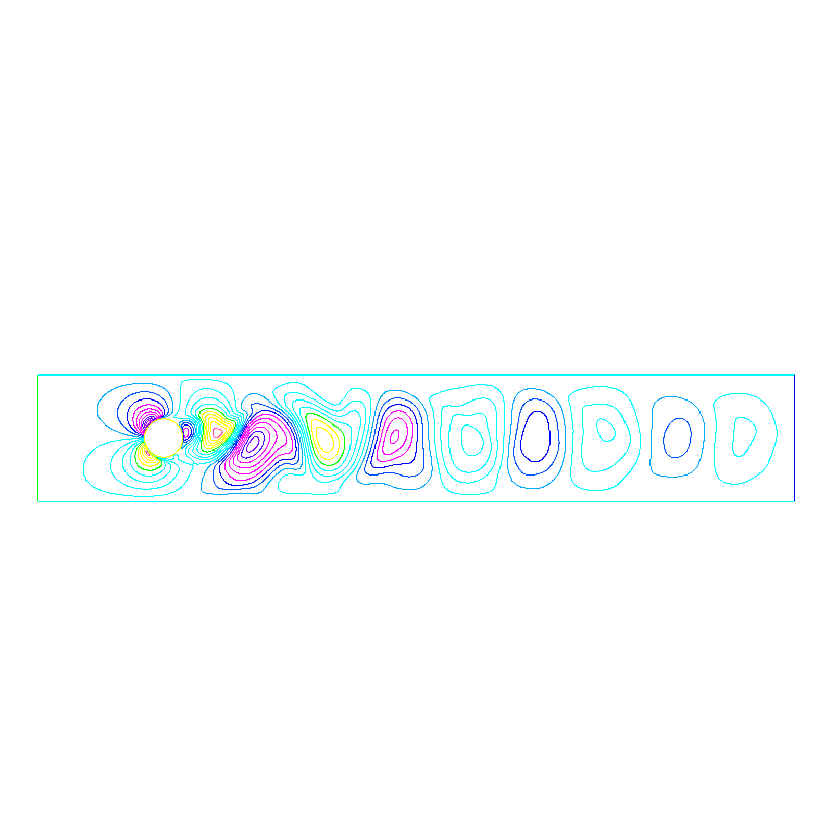}
	\end{minipage}
	\begin{minipage}[c]{10cm}
		\vspace{-11cm}\includegraphics[width=12cm,height=14cm]{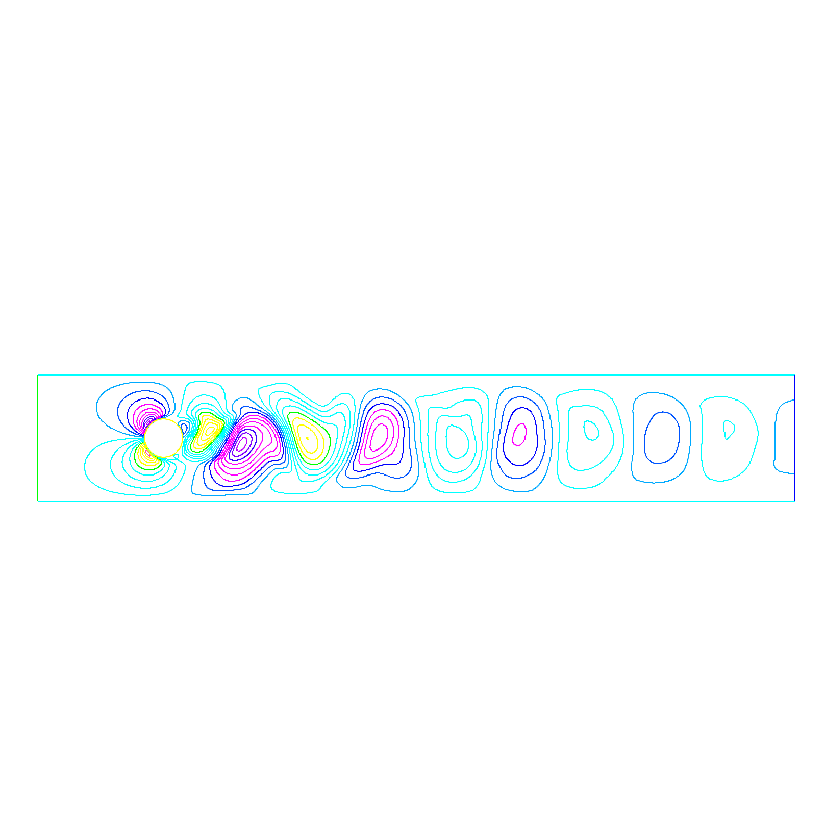}
	\end{minipage}
	\vspace{-5cm}\caption{{Velocity $u_{2h}^n$ of the cylinder flow with a variable density at $t=0.5,~1,~2,~3,~5$ and $7$ (from top to bottom).}}
\end{figure}

\begin{figure}
	\centering
	\begin{minipage}[c]{10cm}
		\vspace{-5cm}\includegraphics[width=12cm,height=14cm]{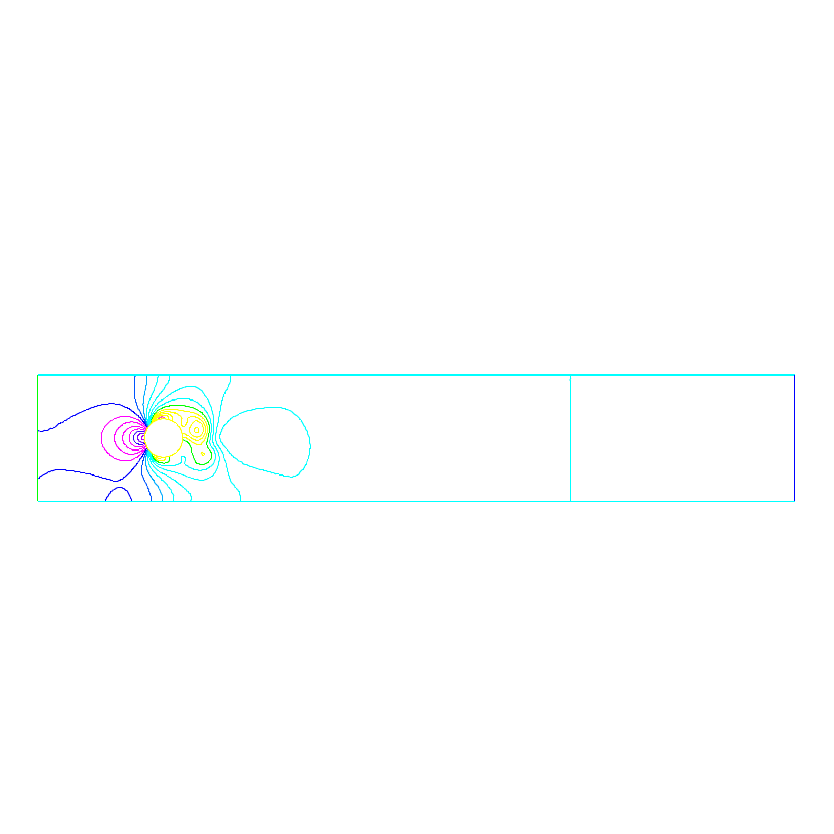}
	\end{minipage}
	\begin{minipage}[c]{10cm}
		\vspace{-11cm}\includegraphics[width=12cm,height=14cm]{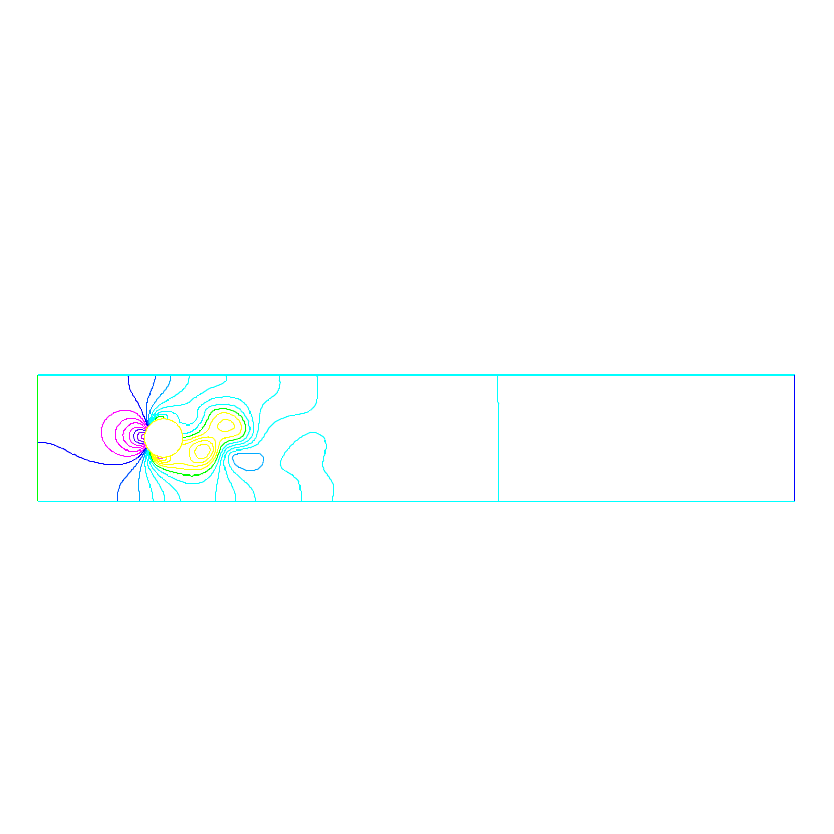}
	\end{minipage}
	\begin{minipage}[c]{10cm}
		\vspace{-11cm}\includegraphics[width=12cm,height=14cm]{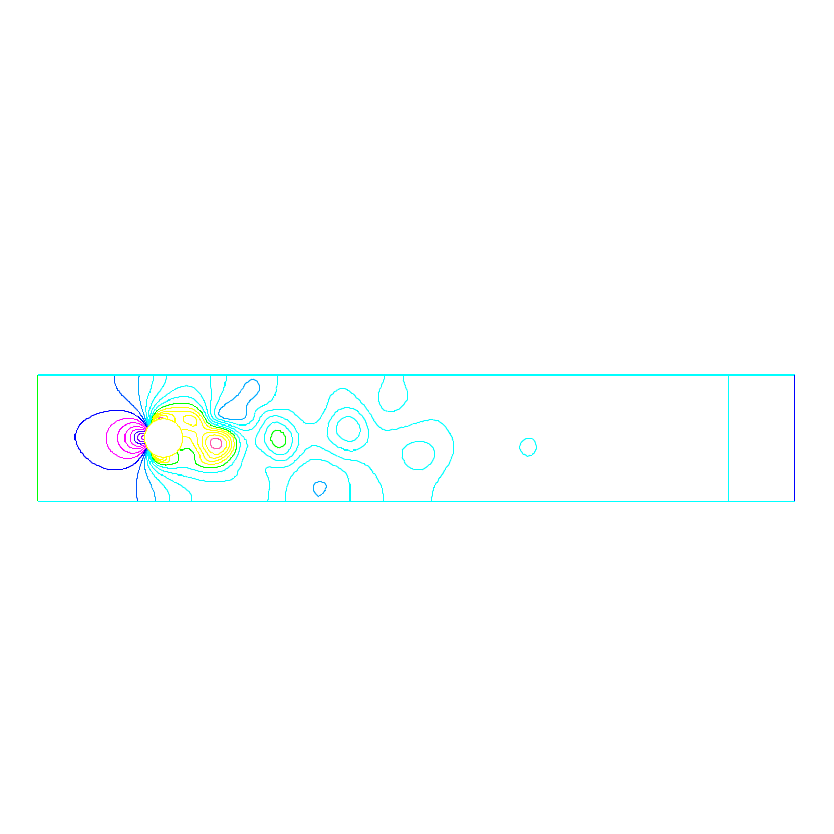}
	\end{minipage}
	\begin{minipage}[c]{10cm}
		\vspace{-11cm}\includegraphics[width=12cm,height=14cm]{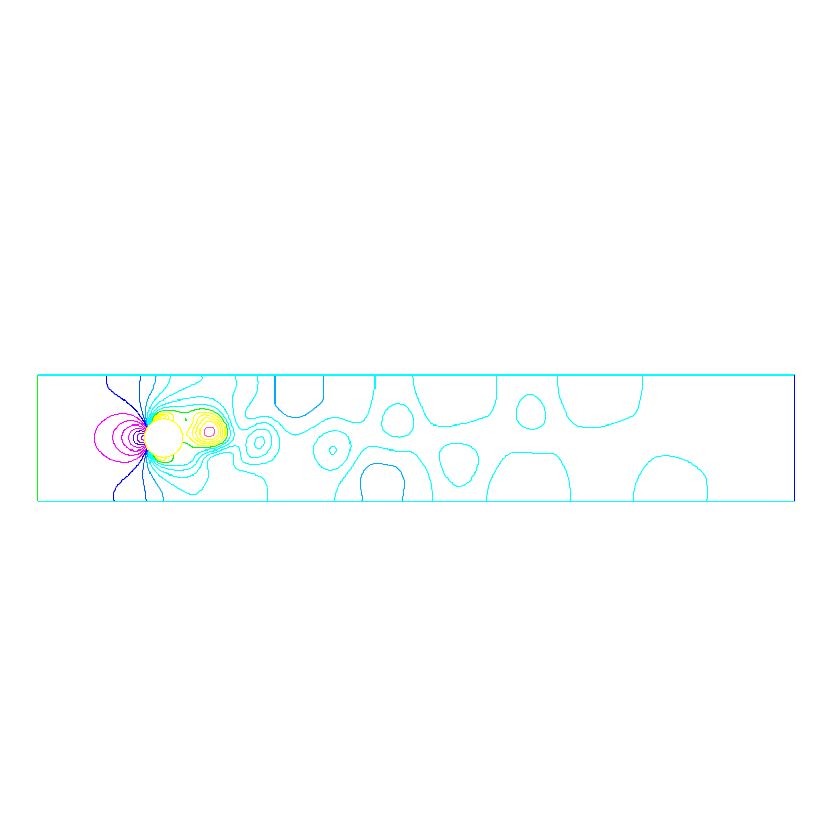}
	\end{minipage}
	\begin{minipage}[c]{10cm}
		\vspace{-11cm}\includegraphics[width=12cm,height=14cm]{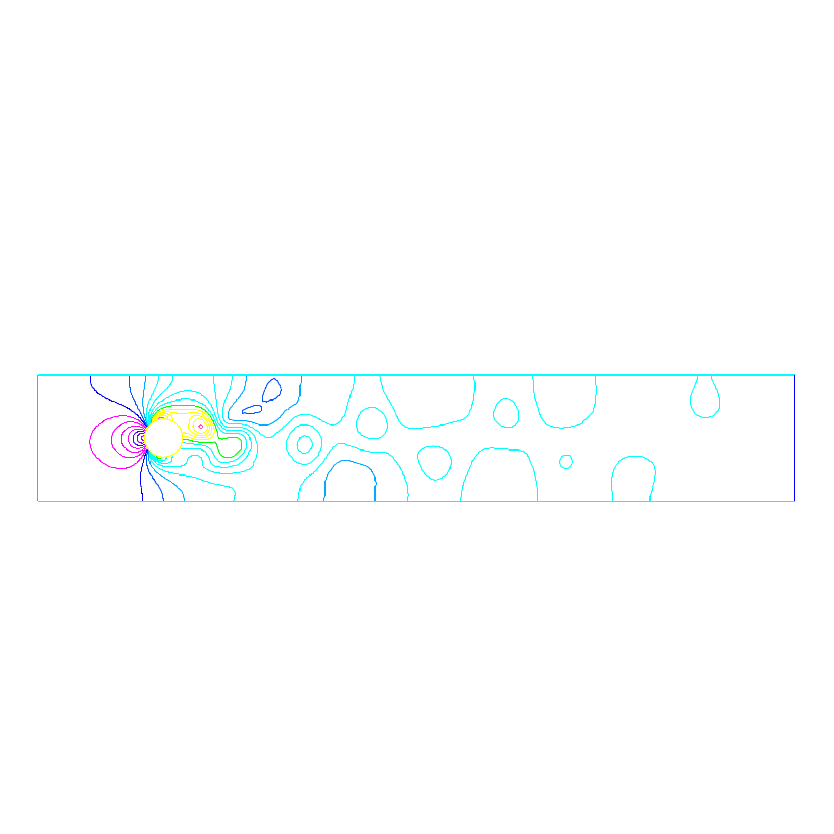}
	\end{minipage}
	\begin{minipage}[c]{10cm}
		\vspace{-11cm}\includegraphics[width=12cm,height=14cm]{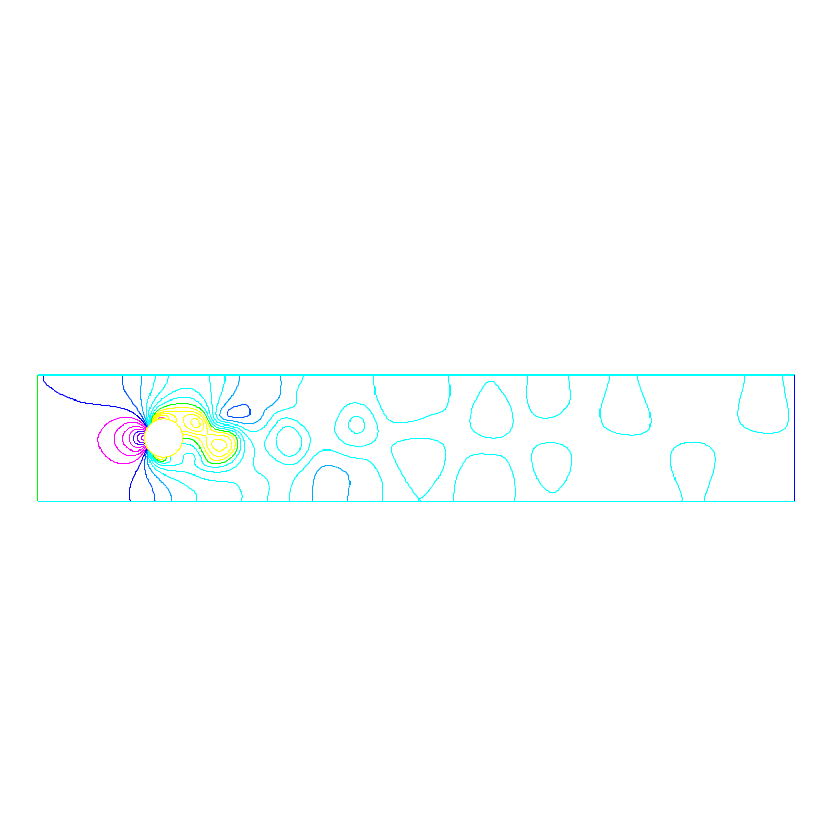}
	\end{minipage}
	\vspace{-5cm}\caption{{Pressure $p_{h}^n$ of the cylinder flow with a variable density at $t=0.5,~1,~2,~3,~5$ and $7$ (from top to bottom).}}
\end{figure}

\begin{figure}
	\centering
	\begin{minipage}[c]{10cm}
		\vspace{-5cm}\includegraphics[width=12cm,height=14cm]{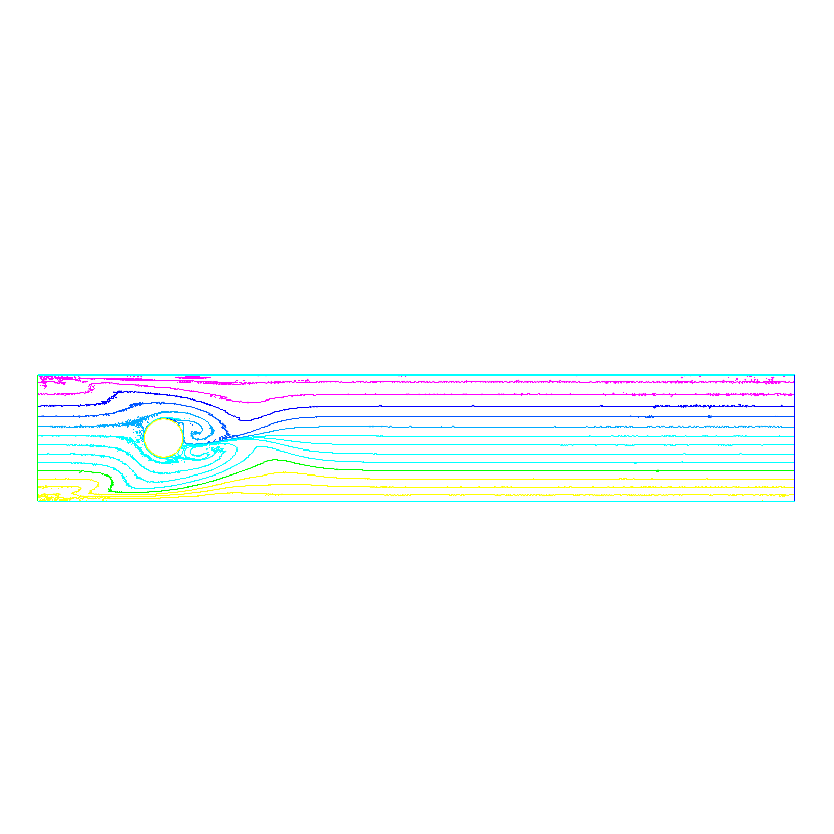}
	\end{minipage}
	\begin{minipage}[c]{10cm}
		\vspace{-11cm}\includegraphics[width=12cm,height=14cm]{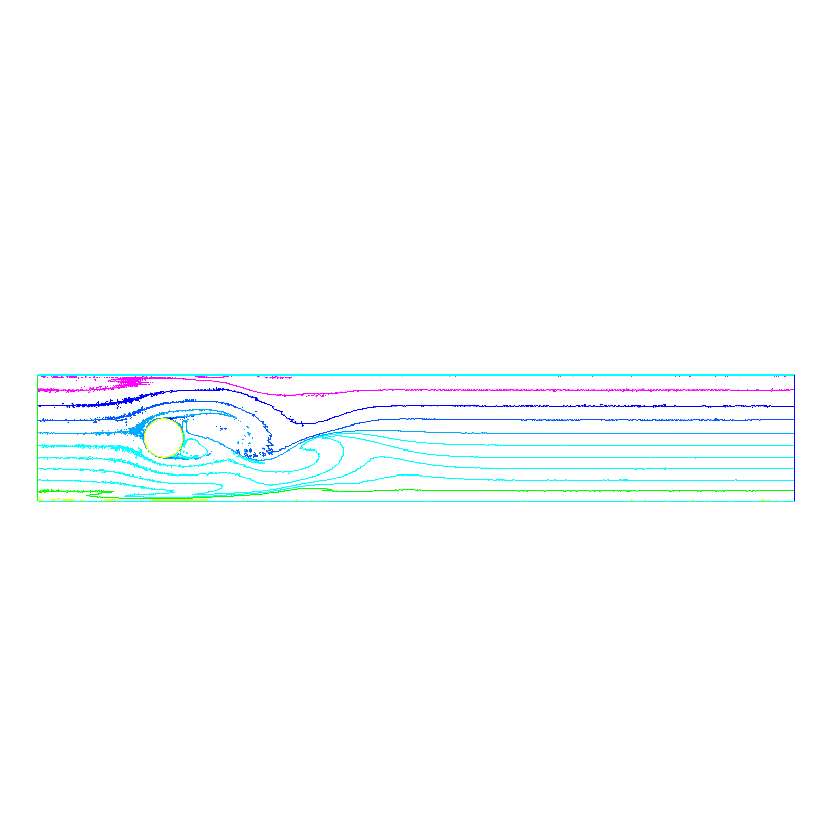}
	\end{minipage}
	\begin{minipage}[c]{10cm}
		\vspace{-11cm}\includegraphics[width=12cm,height=14cm]{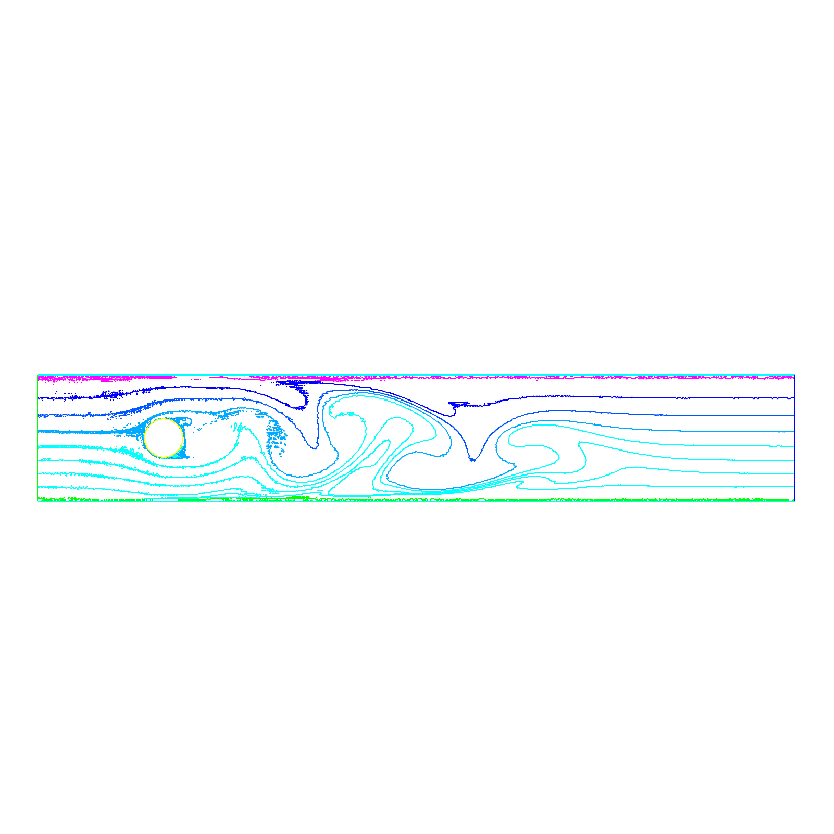}
	\end{minipage}
	\begin{minipage}[c]{10cm}
		\vspace{-11cm}\includegraphics[width=12cm,height=14cm]{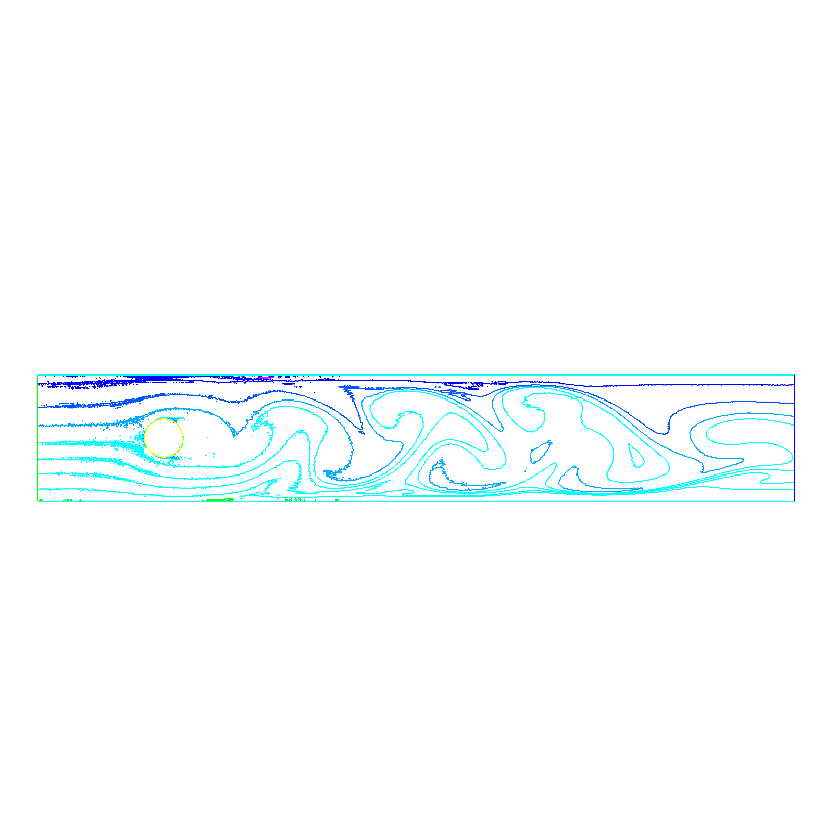}
	\end{minipage}
	\begin{minipage}[c]{10cm}
		\vspace{-11cm}\includegraphics[width=12cm,height=14cm]{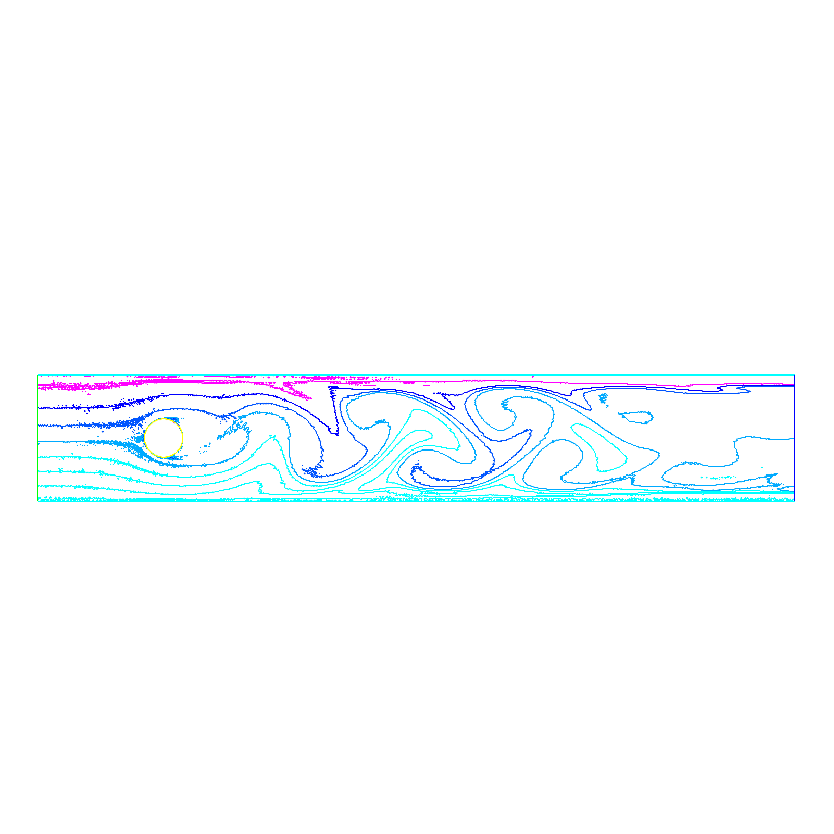}
	\end{minipage}
	\begin{minipage}[c]{10cm}
		\vspace{-11cm}\includegraphics[width=12cm,height=14cm]{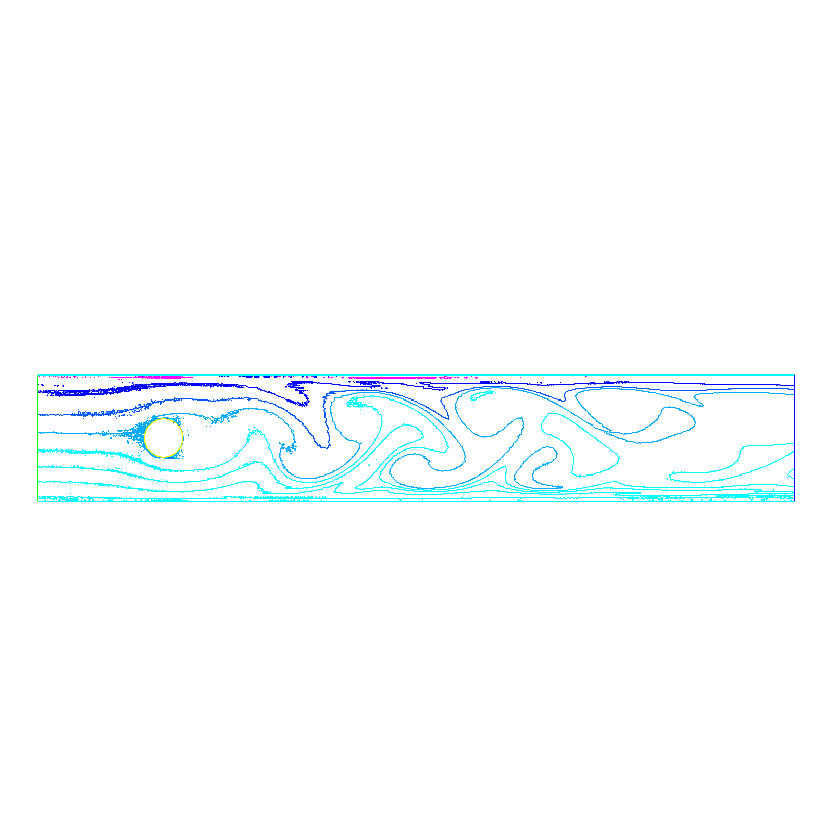}
	\end{minipage}
	\vspace{-5cm}\caption{{Density $\rho_{h}^n$ of the cylinder flow with a variable density at $t=0.5,~1,~2,~3,~5$ and $7$ (from top to bottom).}}
\end{figure}

\end{document}